\documentclass[11pt,a4paper,reqno,oneside]{amsart}

\usepackage[left=0.75in,right=0.75in,bottom=0.75in,top=0.75in]{geometry}

\usepackage{tikz,amssymb,dsfont}
\usepackage{hyperref}
\usepackage{bm}
\usepackage{subcaption}
\usepackage{graphicx}

\DeclareMathOperator{\spec}{spec}
\DeclareMathOperator{\supp}{supp}

\DeclareMathOperator{\R}{\mathbb{R}}
\DeclareMathOperator{\N}{\mathbb{N}}

\DeclareMathOperator{\spann}{span}

\newcommand{\spece}{\spec_\mathrm{ess}}

\newcommand{\ddd}{\,\mathrm{d}}

\def \rr {{\mathbb R}}

\numberwithin{equation}{section}
\numberwithin{figure}{section}
\theoremstyle{definition}

\newtheorem{definition}{Definition}[section]
\newtheorem{remark}[definition]{Remark}
\newtheorem{notation}[definition]{Notation}

\theoremstyle{plain}

\newtheorem{theorem}[definition]{Theorem}
\newtheorem{proposition}[definition]{Proposition}
\newtheorem{lemma}[definition]{Lemma}
\newtheorem{corollary}[definition]{Corollary}

\makeatletter

\usepackage[
backend=biber,
sorting=nyt
]{biblatex}
\bibliography{lit}

\title{Eigenvalues of the magnetic Neumann Laplacian on domains with peaks}
\author{noah koerner }

\begin{document}

\begin{abstract}
	We consider the magnetic Neumann Laplacian on bounded domains in $\mathbb{R}^2$ with outward peaks. The operator is associated with a large magnetic field depending on a parameter $\lambda$, and we investigate the behaviour of the eigenvalues as $\lambda$ tends to $+\infty$. We show that their asymptotic expansion is influenced by the sharpness $q$ and geometry of the peak, and that its main term is of order $\lambda^\frac{2}{q+1}$.
    This is an extension of previous works on magnetic Neumann Laplacians in smooth domains and domains with corners.
\end{abstract}
\maketitle


\section{Introduction}
\subsection{Motivation}

This work is devoted to the spectral analysis of the magnetic Neumann Laplacian on bounded domains with outward peaks.

Let $\Omega \subseteq \R^2 $ be an open, bounded domain and $A = (A_1,A_2)$ a smooth magnetic potential associated with its magnetic field $B=\text{curl} \ A$ on $\Omega$. Then one defines $N_\lambda^\Omega$, the magnetic Laplacian on $\Omega$ with Neumann boundary conditions at $\partial \Omega$, as the unique self-adjoint operator in $L^2(\Omega)$ generated by the quadratic form
\[
n_\lambda^\Omega(f) = \int\limits_\Omega |(\nabla- i\lambda A)f|^2 \ddd x, \quad D(n_\lambda^\Omega) = H^1(\Omega).
\]

This operator appears when studying type II superconductivity as part of the linearization of the Ginzburg-Landau energy functional at the normal state. As one wants to analyse whether the normal state is a local minimum of the functional, one needs to examine the positivity of the linearization. It is therefore a natural consequence that there is much interest in the bottom of the spectrum of $N_\lambda^\Omega$ for large magnetic fields. For further details, we refer to \cite{Fournais_2010}. \cite{Raymond_2017} also gives a good introduction and foundation of the topic. 

If one considers bounded $\Omega$ with $C^0$ class boundary, $N_\lambda^\Omega$ has compact resolvent (see \cite[Chapter 5, Theorem 4.17]{Evans_2018}) and therefore infinitely many discrete eigenvalues tending to $+\infty$. We are interested in the behaviour of those eigenvalues for constant magnetic field $B=1$ and as $\lambda \to + \infty$.

The case of domains with smooth boundary has been explored by Bernhoff and Sternberg in \cite{Bernhoff_1998}, Helffer and Mohammed/Morame in \cite{Helffer_1996},\cite{Helffer_2001Bottles} as well as by Lu and Pan in \cite{Lu_Pan_1999}, \cite{LuPan_1999}. It was shown that the eigenstates are localised near points of the boundary where the curvature is maximal, and that the asymptotic expansion of the first eigenvalue is of the form
\[
E_1(N_\lambda^\Omega) = \Theta_0\lambda + \mathcal{O}(\lambda^{\frac{1}{2}}) \quad \text{ as } \lambda \to + \infty,
\]
where $\Theta_0$ is the de Gennes constant with $\Theta_0= \inf_{\xi \in \R} E_1(D_\xi) \in (0,1)$, and $D_\xi$ is the harmonic oscillator on the half-axis with Neumann boundary conditions at $0$.
Fournais and Helffer even established in \cite{Forunais_2006} a complete asymptotic expansion of the first few eigenvalues on domains whose boundary curvature has exactly one maximum and further proved its dependence on the curvature:

\[
E_1(N_\lambda^\Omega) = \Theta_0 \lambda - \kappa_{max}C_1 \lambda^{\frac{1}{2} } + \mathcal{O}(\lambda^{\frac{1}{4}}) \quad \text{ as } \lambda \to + \infty. 
\]

Here, $\kappa_{max}$ is the maximum of the curvature of $\partial\Omega$ and $C_1$ is a constant independent of the geometry of $\Omega$.

There have also been extensions to domains with piecewise smooth boundaries, namely specific curvilinear polygons in \cite{Bonnaillie_2006} by Bonnaillie-No\"el and Dauge.
Related to the study of curvilinear polygons is a certain model operator for the corners of the curvilinear polygon, the magnetic Neumann Laplacian on an infinite sector of opening $\alpha$, $ S_\alpha:= \{(x,y) \in \R^2: |\arg(x+iy)| < \alpha/2\}$ that we denote by $N_1^{S_{\alpha}}$.  Bonnaillie-No\"el showed in \cite{Bonaillie_2002} that due to the unboundedness of $S_\alpha$, $N_1^{S_\alpha}$ has essential spectrum that starts at $\Theta_0$ and that eigenvalues appear only for sufficiently small angles $\alpha$. In fact, there has been a recent preprint \cite{Kachmar_2026} by Kachmar and Sundqvist that showed the existence of at least one eigenvalue for all angles $\alpha \in (0,\pi)$. So assuming that the curvilinear polygon with $M$ corners with angles $\alpha_j \in (0,2\pi)\setminus \{\pi\}$ has at least one convex corner, one has $\mathcal{E}:= \min\limits_{i =1 ,\dots,M} E_1(N_1^{S_{\alpha_i}}) < \Theta_0 $ and the asymptotic expansion of the first eigenvalue of the magnetic Neumann Laplacian on the curvilinear polygon is of the form

\[
E_1(N_\lambda^\Omega) =  \mathcal{E}\lambda + \mathcal{O}(\lambda^{\frac{1}{2}})  \quad \text{ as } \lambda \to + \infty.
\]
Furthermore, the respective eigenstate is localised near the corner(s) corresponding to $\mathcal{E}$. 

Bonaillie-No\"el and Dauge have conjectured in \cite{Bonnaillie_2006} that $E_1(N_1^{S_\alpha})$ is strictly increasing on $(0,\pi)$ and does not exist on $[\pi,2\pi)$, and under the assumption that this holds, the constant $\mathcal{E}$ would be induced by the corner(s) with the smallest angle.
This would further support the trend that is already apparent through other results on this topic:
The most singular point of the boundary induces the first eigenvalue of $N_\lambda^\Omega$. Furthermore, the more "singular" this point is, the lower the eigenvalue gets.
Our findings on domains with outward peaks align with this pattern as well. Note that while such domains are also piecewise smooth,
they are not contained in the class of curvilinear polygons, as the border of $\Omega$ meets at the peak at an angle of zero. In fact, the eigenvalue asymptotics that emerges is very different from the one present in curvilinear polygons.

\subsection{Main result}

We are studying the operator $N_\lambda^\Omega$ as defined above, where $\Omega $ is a curvilinear polygon with an outward peak at the origin. To rigorously describe such $\Omega$, let us first introduce the peak $V_b$ of sharpness order $q>1$ which is defined as
    \[ 
    V_b := \{ (x,y) \in \R^2 \mid  x \in (0,b),\, a_-x^q < y <   a_+x^q  \}
    \]
    for some $b>0$ and $a_-,a_+ \in \R$ with $a_-<a_+$. A visualisation of $V_b$ can be found on Figure \ref{fig:peak and omega} (A).

We then call $\Omega$ a curvilinear polygon with an outward peak of sharpness $q>1$ at the origin, if there exists a decomposition $\partial \Omega = \Gamma_1 \cup \dots \cup \Gamma_M$ of $\partial \Omega$ into 
smooth curves with regular ends such that $\Gamma_{j-1}$ and $\Gamma_{j}$ meet at an angle of $\alpha_j \in (0,2\pi)\setminus\{\pi \}$ for all $j \in \{2, \dots, M\}$, while $\Gamma_M$ and $\Gamma_1$ meet at zero angle in the origin, and the singularity there is exactly of the form such that 

    \[
    \Omega \cap (-b,b)^2 = V_{b}
    \]
holds for a peak $V_b$ of sharpness order $q$. A visualisation of this kind of domain can be found on Figure \ref{fig:peak and omega} (B).

\begin{figure}[t]
    \centering
    \begin{subfigure}[b]{0.45\textwidth}
    
\tikzset{every picture/.style={line width=0.75pt}} 

\scalebox{0.8}{
\begin{tikzpicture}[x=0.75pt,y=0.75pt,yscale=-1,xscale=1]

\draw    (449.6,73.62) -- (450,226.02) ;
\draw  [draw opacity=0][fill={rgb, 255:red, 155; green, 155; blue, 155 }  ,fill opacity=0.41 ] (449.6,73.62) -- (450,226.02) -- (220,176.02) -- cycle ;
\draw [fill={rgb, 255:red, 255; green, 255; blue, 255 }  ,fill opacity=1 ]   (220,176.02) .. controls (304,166.42) and (406.8,101.62) .. (449.6,73.62) ;
\draw [fill={rgb, 255:red, 255; green, 255; blue, 255 }  ,fill opacity=1 ]   (220.4,176.02) .. controls (335.2,183.62) and (412.8,218.82) .. (450,226.02) ;
\draw    (190,176.02) -- (498,176.02) ;
\draw [shift={(500,176.02)}, rotate = 180] [fill={rgb, 255:red, 0; green, 0; blue, 0 }  ][line width=0.08]  [draw opacity=0] (8.4,-2.1) -- (0,0) -- (8.4,2.1) -- cycle    ;
\draw    (220,236.02) -- (220,68.02) ;
\draw [shift={(220,66.02)}, rotate = 90] [fill={rgb, 255:red, 0; green, 0; blue, 0 }  ][line width=0.08]  [draw opacity=0] (8.4,-2.1) -- (0,0) -- (8.4,2.1) -- cycle    ;

\draw (209,178.02) node [anchor=north west][inner sep=0.75pt]  [font=\small] [align=left] {$\displaystyle 0$};
\draw (371,138.02) node [anchor=north west][inner sep=0.75pt]   [align=left] {$\displaystyle V_{( 0,b)}$};
\draw (439,178.02) node [anchor=north west][inner sep=0.75pt]  [font=\small] [align=left] {$\displaystyle b$};

\end{tikzpicture}}
    \caption{The peak $V_{b}$}
    \end{subfigure}
    \begin{subfigure}[b]{0.45\textwidth}
    \scalebox{0.8}{
\begin{tikzpicture}[x=0.75pt,y=0.75pt,yscale=-1,xscale=1]

\draw [fill={rgb, 255:red, 155; green, 155; blue, 155 }  ,fill opacity=0.5 ]   (201.44,124.22) .. controls (242.67,95.5) and (299.87,57.47) .. (313.87,77.07) .. controls (327.87,96.67) and (385.87,48.27) .. (421.07,83.87) ;
\draw [fill={rgb, 255:red, 155; green, 155; blue, 155 }  ,fill opacity=0.5 ]   (201.6,183.87) .. controls (254,196.17) and (301.87,254.73) .. (341.87,224.73) ;
\draw [fill={rgb, 255:red, 155; green, 155; blue, 155 }  ,fill opacity=0.5 ]   (341.87,224.73) .. controls (348.13,186.27) and (383.47,195.07) .. (411.33,205.47) .. controls (439.2,215.87) and (490.8,185.07) .. (468.93,155.07) ;
\draw [fill={rgb, 255:red, 155; green, 155; blue, 155 }  ,fill opacity=0.5 ]   (421.07,83.87) .. controls (428.83,78.05) and (445.09,70.6) .. (452.61,82.84) .. controls (460.13,95.07) and (507.73,130.27) .. (468.93,155.07) ;
\draw  [draw opacity=0][fill={rgb, 255:red, 155; green, 155; blue, 155 }  ,fill opacity=0.5 ] (201.44,124.22) -- (201.6,183.87) -- (108.27,164.3) -- (158.94,142.5) -- cycle ;
\draw [fill={rgb, 255:red, 255; green, 255; blue, 255 }  ,fill opacity=1 ]   (108.43,164.3) .. controls (142.52,160.54) and (184.23,135.18) .. (201.6,124.22) ;
\draw [fill={rgb, 255:red, 255; green, 255; blue, 255 }  ,fill opacity=1 ]   (108.43,164.3) .. controls (155.01,167.27) and (186.5,181.05) .. (201.6,183.87) ;
\draw  [draw opacity=0][fill={rgb, 255:red, 155; green, 155; blue, 155 }  ,fill opacity=0.5 ] (468.93,155.07) -- (341.87,224.73) -- (201.6,183.87) -- (201.44,124.22) -- (421.07,83.87) -- cycle ;
\draw [color={rgb, 255:red, 0; green, 0; blue, 0 }  ,draw opacity=1 ][fill={rgb, 255:red, 255; green, 255; blue, 255 }  ,fill opacity=1 ]   (341.87,224.73) .. controls (347.14,192) and (373.14,195.14) .. (390,198.86) ;

\draw (303.07,134.33) node [anchor=north west][inner sep=0.75pt]   [align=left] {$\displaystyle \Omega $};

\end{tikzpicture}}

    \caption{A curvilinear polygon with an outward peak}
    \end{subfigure}
    
    \caption{Visualization of a peak and a curvilinear polygon with an outward peak}
    \label{fig:peak and omega}
\end{figure}

We summarise our main result in the following theorem:

\begin{theorem}
\label{main theo}
    Let $\Omega \subseteq \R^2 $ be a curvilinear polygon with an outward peak at the origin of sharpness order $q$, $n\in \N$, and $\kappa = \frac{1}{q+1} \frac{2q+3}{2+3q} $. Then, as $\lambda \to +\infty$, the $n$th eigenvalue of $N_\lambda^\Omega$ has the following asymptotics:
    \[
    E_n(N_\lambda^\Omega) = \lambda^{\frac{2}{q+1}} ({a_+-a_-})^{\frac{2}{q+1}} E_n(T) + \mathcal{O}(\lambda^{2\kappa}),
    \]
    where the operator $T$ is defined as the Friedrichs' extension in $L^2(0,\infty)$ of 
    \[
    C^\infty_c(0,\infty) \ni h \mapsto- h'' + h\left( \frac{1}{12} s^{2q}  + \frac{q(q-2)}{4s^2} \right).
    \]
\end{theorem}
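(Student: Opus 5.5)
Throughout I write $\mu = \lambda^{2/(q+1)}$. The plan is to prove matching upper and lower bounds for $E_n(N_\lambda^\Omega)$, both obtained by comparison with a model operator on the infinite peak
\[
V=\{(x,y): x>0,\ a_-x^q<y<a_+x^q\}.
\]
\emph{Gauge and scaling.} Since $B=1$ is constant, I fix the gauge $A=(-y,0)$ near the origin. I rescale $x=\lambda^{-1/(q+1)}s$ and $y=\lambda^{-1/(q+1)}t$. Near the tip the peak is much thinner than long, so this is not the isotropic scaling; the natural variables are the longitudinal one $s$ and the normalized transversal one
\[
\tau=\frac{y-\tfrac{a_++a_-}{2}x^q}{(a_+-a_-)x^q}\in\left(-\tfrac12,\tfrac12\right).
\]
One checks that the model operator on $V$ is unitarily equivalent, after scaling, to $\mu$ times an operator independent of $\lambda$. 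The choice of $\tau$ centred in the cross-section, combined with a further gauge change $e^{i\phi(x)}$, removes the transversal average of the potential.

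\emph{Reduction to $T$.} In each cross-section of width $w(x)=(a_+-a_-)x^q$, the Neumann condition makes the lowest transversal mode the constant function. The next Neumann eigenvalue is $\pi^2/w^2$, which is huge near the tip. I project onto the constants via the unitary map
\[
f\mapsto h(x)=w(x)^{1/2}\,\langle f(x,\cdot)\rangle .
\]
This map contributes two terms:
\begin{enumerate}
\item the Hardy-type term $\frac{q(q-2)}{4x^2}$, which comes from the $w^{1/2}$ weight, since $(w^{1/2})''/w^{1/2}=\frac{q(q-2)}{4x^2}$;
\item the variance of $y$ over the cross-section after centring, namely $w^2/12=\frac{(a_+-a_-)^2x^{2q}}{12}$.
\end{enumerate}
After scaling $x\mapsto$ a multiple of $s$ that absorbs the factor $(a_+-a_-)^{2/(q+1)}$, these give exactly $T$ times $\mu(a_+-a_-)^{2/(q+1)}$.

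\emph{Upper bound.} I take the first $n$ eigenfunctions $h_j$ of $T$. These decay superexponentially because the potential grows like $s^{2q}$, and they behave like $s^{q/2}$ at $0$. I build quasimodes $f_j=e^{i\lambda\phi}\,w^{-1/2}h_j(\cdot)\,\chi$, constant in $\tau$, with a cutoff $\chi$ at scale $x\sim \lambda^{-\rho}$. Inside the quadratic form $n_\lambda^\Omega$, the error comes from the $y$-dependence of the centring of the cross-section. A tilted peak ($a_+\ne -a_-$) produces coupling terms of relative size $x^{q-1}$. Optimizing the cutoff scale against these error terms should yield the remainder $\lambda^{2\kappa}$. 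I then apply the min–max principle.

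\emph{Lower bound.} This is the main obstacle. I use an IMS partition of unity $\chi_1^2+\chi_2^2=1$, with $\chi_1$ supported in $\{x<\lambda^{-\rho}\}$ and localization error $\mathcal O(\lambda^{2\rho})$.
\begin{itemize}
\item Away from the peak, the known bounds for curvilinear polygons give energy $\ge c\lambda$, which is much larger than $\mu$ because $q>1$.
\item Inside the peak, I decompose $f=\Pi f+(1-\Pi)f$, where $\Pi$ is the projection onto transversal constants. On the range of $1-\Pi$, the Neumann gap gives
\[
\int|\partial_y g|^2 \ge \pi^2 w^{-2}\|g\|^2,
\]
which dominates every other term near the tip.
\end{itemize}
The difficulty is the magnetic cross term $2\lambda\,\mathrm{Re}\langle y\,\partial_x\Pi f,(1-\Pi)f\rangle$ and the commutator $[\partial_x,\Pi]\sim x^{-1}$. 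I would bound these by Cauchy–Schwarz with a weight, absorbing them into the gap term and into a fraction of $\mu\,q_T$. This costs a relative error of order $\lambda^{1/(q+1)}x^{q+1}$ evaluated at the cutoff scale.

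Balancing this against the IMS error $\lambda^{2\rho}$ fixes $\rho$ and produces the exponent $2\kappa$. The value
\[
\kappa=\frac{1}{q+1}\cdot\frac{2q+3}{3q+2}
\]
suggests exactly such a three-way optimization. Finally, I compare the truncated form with $T$ restricted to $(0,L)$ with Dirichlet at $L=\lambda^{1/(q+1)-\rho}$. The exponential decay of the eigenfunctions of $T$ makes the truncation error negligible. Min–max then gives the lower bound.
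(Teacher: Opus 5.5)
There is a genuine gap in your lower bound, at the step ``away from the peak, the known bounds for curvilinear polygons give energy $\ge c\lambda$''.

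\textbf{Why the two-zone partition fails.} The support of your $\chi_2$ contains the part $V_{(\lambda^{-\rho}/2,b)}$ of the peak. This is a $\lambda$-dependent domain degenerating into the cusp, so no fixed-geometry result applies to it. Worse, the bound is false. Take a test function depending only on $x$ (in the gauge removing the mean $\bar y=\frac{a_++a_-}{2}x^q$) and supported in $\lambda^{-\rho}<x<2\lambda^{-\rho}$. Its energy is $\lesssim\lambda^{2\rho}+\lambda^2w^2\sim\lambda^{2(1-q\rho)}$; this is Lemma~\ref{lem: upper bound corner} combined with Lemma~\ref{lem: model operator middle piece}. Since $2(1-q\rho)<1$ whenever $\rho>\frac1{2q}$, the outer piece has energy $o(\lambda)$. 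Choosing $\rho\le\frac1{2q}$ does not rescue the argument either. Your tip-zone step absorbs magnetic cross terms into the Neumann gap $\pi^2w^{-2}\|(1-\Pi)f\|^2$. For example, $\lambda^2\langle(y-\bar y)\Pi f,(y-\bar y)(1-\Pi)f\rangle$ is bounded by $\lambda\|(y-\bar y)\Pi f\|\cdot\lambda w\|(1-\Pi)f\|$, and absorbing this needs $\lambda w^2\ll1$, i.e.\ $x\ll\lambda^{-1/(2q)}$ on the whole zone. The paper in fact needs $\kappa>\frac{2q+1}{3q(q+1)}>\frac1{2q}$. Beyond $x\sim\lambda^{-1/(2q)}$ the cross-section is wider than the magnetic length, and low-lying states are boundary states, not transversal constants.

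\textbf{What is missing: intermediate scales.} The paper uses four IMS pieces.
\begin{itemize}
\item On $I_1=(0,\lambda^{-\kappa})$ it runs the precise projection argument.
\item On $I_2=(\lambda^{-\kappa}/2,b_1\lambda^{-1/(2q)})$ the same projection method only has to give the crude bound $L^{I_2}_\lambda\ge C\lambda^{2(1-q\kappa)}\gg\lambda^{2/(q+1)}$ (Lemma~\ref{lem: lower bound middle}).
\item On $I_3=(b_1\lambda^{-1/(2q)}/2,b)$ a further IMS partition gives $\ge C\lambda$ (Lemma~\ref{lem: lower bound away from corner}). It splits into $\lambda^{-1/2}$-wide tubular strips along $\Gamma_\pm$ (half-plane bound $\Theta_0\lambda/2$) and a Dirichlet interior piece (bound $\lambda$). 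Here $b_1$ is taken large so that the localization cost $c_0\lambda$ stays below $\Theta_0\lambda/2$.
\item Only the fixed domain $\tilde\Omega=\Omega\setminus\overline{V_{(0,b/2)}}$ is handled by the curvilinear-polygon result.
\end{itemize}

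\textbf{Exponent bookkeeping.} Your estimates also do not lead to the stated $\kappa$. Your relative error $\lambda^{1/(q+1)}x^{q+1}$ at $x=\lambda^{-\rho}$ gives the absolute error $\lambda^{3/(q+1)-(q+1)\rho}$. Balancing it against $\lambda^{2\rho}$ gives $\rho=\frac{3}{(q+1)(q+3)}$, which is always $<\frac1{2q}$, i.e.\ exactly the regime where the tip-zone argument breaks down. In the paper the limiting cross term is $\lambda s^qt\,\partial_s\overline{P_0v}\,\tilde v$. It is estimated via
\begin{itemize}
\item $\|\partial_sP_0v\|\lesssim\lambda^{1-q\kappa}\|P_0v\|+\sqrt{C_\lambda}$,
\item $\|\tilde v\|\lesssim\lambda^{-q\kappa}\sqrt{E_n(L^I_\lambda)}\|v\|$,
\item $\|P_0v\|\lesssim\lambda^{-1/(q+1)}\sqrt{C_\lambda}$.
\end{itemize}
This gives the absolute error $\lambda^{\frac{2q+3}{q+1}-3q\kappa}$, and equating it with the IMS cost $\lambda^{2\kappa}$ yields $\kappa=\frac{2q+3}{(q+1)(3q+2)}$.

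\textbf{Upper bound.} Your worry about tilt errors is unfounded. After the $x$-dependent gauge removing $\bar y$, test functions depending on $x$ only reproduce $t_\lambda$ exactly (Lemma~\ref{lem: upper bound corner}). The only loss is the cut-off, costing at most $\lambda^{2\kappa}$ (Lemma~\ref{lem: model operator comparison cropped}).
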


Note that the order of the first term of the expansion is now ${\frac{2}{q+1}}$ and no longer $1$, as was the case for smooth domains and curvilinear polygons. This means that for domains with peaks, the first eigenvalue of $N_\lambda^\Omega$ diverges to $+\infty$ much more slowly than it is the case for all previously examined domains. In fact, the sharper the peak is, the slower the approach, which aligns with the already mentioned pattern of strong singularities inducing low eigenvalues.

The structure of this paper goes as follows:
We start in Section \ref{sec: Prelim} by laying down some notation and stating useful lemmas concerning the comparison of operators. We also introduce the $1$-dimensional model operator $T_\lambda$ that is needed for the spectral analysis of the peak $V_{b}$ and show some of its attributes as well as recall some basic properties of the magnetic Laplacian in $2$ dimensions with Dirichlet/Neumann boundary conditions.

The main portion of this paper, Section \ref{sec: peak}, is devoted to the spectral analysis of the magnetic Laplacian on sections of the peak $V_{b}$ with mixed Dirichlet and Neumann boundary conditions. We start by providing a general upper bound, then proceed by showing lower bounds for three kinds of sections of $V_{b}$: Directly at the peak, near the peak, and away from the peak. These bounds are established with the help of the $1$-dimensional model operator $T_\lambda$.

In Section \ref{sec: Endzeit}, we collect the results of Section \ref{sec: peak} to prove the main Theorem \ref{main theo}.

\section{Preliminaries}
\label{sec: Prelim}
\subsection{Notation}

A sesquilinear form $q: D(q) \times D(q) \to \mathbb{C}$ will always be referred to by a lower case letter and we will write $q(u) = q(u,u)$ for any $u \in D(q) \subseteq \mathcal H$ where $\mathcal{H}$ is an infinite-dimensional Hilbert space. The linear operator that is generated by a closed, symmetric sesquilinear form $q$ in  $\mathcal{H}$ will be denoted by the corresponding uppercase letter $Q$, i.e., one has 
\[
 q(u,v) = \langle u, Qv \rangle_\mathcal{H} \quad \text{ for all } u \in D(q)\text{ and } v \in D(Q).
\]

Using Min-Max principle, we denote the $n$th Rayleigh quotient $\Lambda_n(Q)$ as
\[
\Lambda_n(Q) = \inf\limits_{ \underset{\dim U =n}{U \subseteq D(q)}} \sup\limits_{ \underset{u \neq 0}{u \in U}} \frac{q(u,u)}{\|u \|^2_{\mathcal{H}}}
\]
and the $n$th eigenvalue of $Q$ as $E_n(Q)$ (if it exists).

\subsection{IMS partition and comparing operators}
We recall a standard lemma concerning the comparison of operators that follows directly from the Min-Max principle.
\begin{lemma}
\label{comparing operators}
    Let $Q, \tilde Q$ be two lower semibounded self-adjoint operators 
    in Hilbert spaces $\mathcal{H}, \tilde{\mathcal{H}}$. Let $\tilde D \subset D(\tilde q)$ be a dense subspace and $J: \tilde D \rightarrow D(q) $ such that it holds
    \[ \| J u \|^2_\mathcal{H} = \|u\|^2_{\tilde{\mathcal{H}}}, \quad q(Ju) \leq \tilde q (u) \quad \forall \ u \in \tilde D .
    \]
    Then, for any $n \in \N$, one has $  \Lambda_n(Q) \leq \Lambda_n(\tilde Q)$.

\end{lemma}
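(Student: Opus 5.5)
The plan is to prove the inequality directly from the Min-Max characterisation of $\Lambda_n$, after first reducing the infimum for $\tilde Q$ to subspaces of the dense set $\tilde D$.

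\textbf{Step 1: reduction to $\tilde D$.} We show that for $\Lambda_n(\tilde Q)$ the infimum over $n$-dimensional subspaces $U\subseteq D(\tilde q)$ may be restricted to subspaces $U\subseteq \tilde D$. The density used here is density in the form domain $D(\tilde q)$ with respect to the form norm $\bigl(\tilde q(u)+(c+1)\|u\|^2\bigr)^{1/2}$, where $-c$ is a lower bound of $\tilde Q$. This is the reading under which the lemma is applied, for instance with $C_c^\infty$ a core for the Friedrichs extension. Fix $\varepsilon>0$ and an $n$-dimensional $U\subseteq D(\tilde q)$ with
\[
\sup_{0\neq u\in U}\frac{\tilde q(u)}{\|u\|^2}\le \Lambda_n(\tilde Q)+\varepsilon .
\]
Choose a basis $u_1,\dots,u_n$ of $U$ that is orthonormal in $\tilde{\mathcal H}$, and approximate each $u_j$ in form norm by some $v_j\in\tilde D$.

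For $v_j$ close enough to $u_j$, the Gram matrix of $v_1,\dots,v_n$ stays close to the identity, so the $v_j$ are linearly independent and $V=\spann\{v_j\}$ is $n$-dimensional. Writing $v=\sum c_jv_j$, the maps
\[
c\mapsto \tilde q\Bigl(\sum c_jv_j\Bigr),\qquad c\mapsto \Bigl\|\sum c_jv_j\Bigr\|^2
\]
are quadratic forms on $\mathbb C^n$ whose coefficients converge to those for the $u_j$. By compactness of the unit sphere in $\mathbb C^n$, the supremum of the Rayleigh quotient over $V$ is therefore at most $\Lambda_n(\tilde Q)+2\varepsilon$ once the approximation is fine enough. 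This step is the only genuinely non-formal part of the argument, and the finite-dimensional uniform convergence is exactly what handles it.

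\textbf{Step 2: transport via $J$.} Take such a $V\subseteq\tilde D$. The condition $\|Ju\|=\|u\|$ means $J$ is an isometry. We assume $J$ is linear, as is implicit in the statement. Then $J$ is injective, so $JV\subseteq D(q)$ is $n$-dimensional. For $0\ne w=Jv\in JV$ we get
\[
\frac{q(Jv)}{\|Jv\|^2}\le \frac{\tilde q(v)}{\|v\|^2},
\]
and hence
\[
\Lambda_n(Q)\le\sup_{0\neq w\in JV}\frac{q(w)}{\|w\|^2}\le \sup_{0\neq v\in V}\frac{\tilde q(v)}{\|v\|^2}\le\Lambda_n(\tilde Q)+2\varepsilon .
\]

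\textbf{Step 3: conclusion.} Letting $\varepsilon\to0$ gives $\Lambda_n(Q)\le\Lambda_n(\tilde Q)$. If $\Lambda_n(\tilde Q)=+\infty$ there is nothing to prove.
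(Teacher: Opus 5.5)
Your argument is correct and is the proof the paper has in mind: the paper only says the lemma follows directly from the Min-Max principle, and you carry this out by pushing $n$-dimensional subspaces forward through the linear isometry $J$. Your Step 1, which reduces to subspaces of $\tilde D$ using density in the form norm, fills in the one detail the paper leaves implicit. The form-norm reading of ``dense'' is indeed the one required, since density only in $\tilde{\mathcal H}$ would not suffice for the statement.
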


We will frequently need to employ the IMS localization formula. Its formulation in this specific setting goes as follows:

\begin{lemma}
\label{lem: IMS partition}
    Let $U \subseteq \R^2$ be an open set, $A$ a smooth magnetic potential on $U$ such that curl $A =1$ holds,
    \[
    q_\lambda^U(f) := \int\limits_U |(\nabla - i\lambda A)f|^2 \ddd x, \quad D(q_\lambda^U) \subseteq H^1(U)
    \]
    a closed, lower semi-bounded quadratic form and 
    $\chi_j \in C^\infty(\overline U)$, $0\leq \chi_j \leq 1$, $\chi_1^2+ \dots+ \chi_n^2 =1$ then it holds

    \begin{align*}
    &q^U_\lambda(f) = \sum\limits_{j=1}^M q_\lambda^U(\chi_j f) -\sum\limits_{j=1}^M \| \nabla \chi_j f \|^2_{L^2(U)} , \\
    &q^U_\lambda(f) + \|f\|^2_{L^2(U)} \sum\limits_{j=1}^n \| \nabla \chi_j \|^2_{\infty} \geq \sum\limits_{j=1}^n q_\lambda^U(\chi_j f).
    \end{align*}
\end{lemma}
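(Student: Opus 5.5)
The plan is a direct pointwise computation followed by integration. The only structural input is $\chi_1^2+\dots+\chi_n^2=1$, which forces the cross terms to cancel. (The upper summation index $M$ in the first identity should read $n$.) Write $\Pi := \nabla - i\lambda A$.

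For $f \in D(q^U_\lambda)$, each $\chi_j f$ again lies in $D(q^U_\lambda)$, since $\chi_j$ and $\nabla\chi_j$ are bounded on $\overline U$. Here we assume, as is implicit in the statement and true for all domains used later (forms on $H^1$ with Dirichlet conditions on parts of the boundary), that the form domain is stable under multiplication by such $\chi_j$.

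By the Leibniz rule,
\[
\Pi(\chi_j f) = \chi_j\, \Pi f + f\, \nabla \chi_j .
\]
Since $\chi_j$ is real, expanding the square gives pointwise
\[
|\Pi(\chi_j f)|^2 = \chi_j^2 |\Pi f|^2 + |f|^2 |\nabla\chi_j|^2 + 2\,\mathrm{Re}\big( \chi_j \nabla\chi_j \cdot \overline{f}\, \Pi f \big).
\]
Summing over $j$ uses two facts. First, $\sum_j \chi_j^2 = 1$. Second, the vector field $\overline f\,\Pi f$ does not depend on $j$, so the cross terms add up to $2\,\mathrm{Re}\big(\big(\sum_j \chi_j\nabla\chi_j\big)\cdot \overline f\,\Pi f\big)$. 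Moreover
\[
\sum_j \chi_j \nabla\chi_j = \tfrac12 \nabla \sum_j \chi_j^2 = 0,
\]
so the cross terms vanish and we get
\[
\sum_{j=1}^n |\Pi(\chi_j f)|^2 = |\Pi f|^2 + |f|^2 \sum_{j=1}^n |\nabla \chi_j|^2 .
\]
Integrating over $U$ yields the first identity, $q^U_\lambda(f) = \sum_j q^U_\lambda(\chi_j f) - \sum_j \|f\nabla\chi_j\|^2_{L^2(U)}$, where $\|\nabla\chi_j f\|$ is read as $\|f\nabla\chi_j\|$. All integrals are finite because $f \in H^1(U)$ and $\chi_j \in C^\infty(\overline U)$ with bounded gradient.

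For the inequality, estimate $\|f\nabla\chi_j\|^2_{L^2(U)} \le \|\nabla\chi_j\|_\infty^2 \|f\|^2_{L^2(U)}$ for each $j$ and insert this into the identity. The notation $\|\nabla\chi_j\|_\infty^2$ in the statement should be read in this way.

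There is no real obstacle here. The only points needing care are the real-valuedness of $\chi_j$, which lets the cross terms combine into $\tfrac12\nabla\sum_j\chi_j^2$, and the domain-stability remark above.
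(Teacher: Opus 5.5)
Your argument is correct and is the standard form version of the IMS formula that the paper defers to (Bonnaillie-No\"el--Dauge, Lemma 4.4). It uses the Leibniz rule $(\nabla - i\lambda A)(\chi_j f) = \chi_j(\nabla - i\lambda A)f + f\nabla\chi_j$, cancels the cross terms via $\sum_j \chi_j\nabla\chi_j = \tfrac12\nabla\sum_j\chi_j^2 = 0$, integrates, and then applies $\|f\nabla\chi_j\|^2_{L^2(U)} \le \|\nabla\chi_j\|_\infty^2\|f\|^2_{L^2(U)}$; your readings of $M$ as $n$ and of $\|\nabla\chi_j f\|$ as $\|f\nabla\chi_j\|$ are the intended ones.
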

\begin{proof}
    Analogue to \cite[Lemma 4.4]{Bonnaillie_2006}. 
\end{proof}

\subsection{1-dimensional model operators}
\label{sec: 1dim model op}

For the analysis of the magnetic Laplacian on the peak $V_{b}$, we need to introduce the following $1$-dimensional model operator as well as a cropped version of it. We will examine some of their properties and how they are related to each other.

\begin{definition}
    For $\lambda >0$, denote by $T_\lambda$ the symmetric differential operator given by the Friedrichs extension in $L^2(0,\infty)$ of 
    \[
        C^\infty_c(0, \infty) \ni h \mapsto -h'' + \left( \frac{(a_+-a_-)^2}{12} \lambda^2s^{2q} + \frac{q(q-2)}{4s^2} \right)h.
    \]
\end{definition}

We summarise the properties of $T_\lambda$ in the following proposition.

\begin{proposition}
\label{lem: properties model operator}
    $T_\lambda$ is positive, unitary equivalent to $ \lambda^{\frac{2}{q+1}} T_1 $, and its spectrum consists of simple, discrete eigenvalues.
\end{proposition}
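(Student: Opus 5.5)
We prove the three claims (positivity, the scaling identity, and discreteness/simplicity) in that order. Throughout we write $c := \frac{(a_+-a_-)^2}{12}>0$ and $V_\lambda(s) = c\lambda^2 s^{2q} + \frac{q(q-2)}{4s^2}$.

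\emph{Positivity.} The potential term $\frac{q(q-2)}{4s^2}$ can be negative, namely for $1<q<2$. We control it with the one-dimensional Hardy inequality
\[
\int_0^\infty |h'|^2 \ddd s \geq \frac14 \int_0^\infty \frac{|h|^2}{s^2}\ddd s, \qquad h\in C_c^\infty(0,\infty).
\]
Since $\frac{q(q-2)}{4} = \frac{(q-1)^2-1}{4} \geq -\frac14$, we obtain for all $h\in C_c^\infty(0,\infty)$
\[
\langle T_\lambda h,h\rangle \geq \int_0^\infty \Bigl(|h'|^2-\frac{|h|^2}{4s^2}\Bigr)\ddd s + c\lambda^2\int_0^\infty s^{2q}|h|^2 \ddd s \geq c\lambda^2\int_0^\infty s^{2q}|h|^2\ddd s \geq 0 .
\]
The quadratic form is therefore nonnegative, and the Friedrichs extension exists and is nonnegative. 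For strict positivity, i.e.\ $0\notin\spec T_\lambda$, we use compactness of the resolvent (shown below) together with the following observation. Any form-domain element $h$ with $t_\lambda(h)=0$ must satisfy $\int s^{2q}|h|^2\ddd s=0$, hence $h=0$. So $0$ is not an eigenvalue, and therefore not in the spectrum.

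\emph{Unitary equivalence.} We use the dilation $(U_\mu h)(s) = \mu^{1/2}h(\mu s)$, which is unitary on $L^2(0,\infty)$ and maps $C_c^\infty(0,\infty)$ onto itself. Conjugating, we get
\[
U_\mu^{-1}T_\lambda U_\mu = \mu^{-2}\Bigl(-\partial_s^2 + c\lambda^2\mu^{-2q-2}s^{2q} + \frac{q(q-2)}{4s^2}\Bigr).
\]
Here the Hardy term scales exactly like the Laplacian. Choosing $\mu = \lambda^{1/(q+1)}$ makes the coefficient $\lambda^2\mu^{-2q-2}$ equal to $1$. This yields $U_\mu^{-1}T_\lambda U_\mu = \lambda^{-\frac{2}{q+1}}T_1$ on the core, and hence $U_\mu T_1 U_\mu^{-1} = \lambda^{\frac{2}{q+1}}\cdot(\ldots)$ in the appropriate direction, i.e.\ $T_\lambda \cong \lambda^{\frac{2}{q+1}}T_1$. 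Since $U_\mu$ preserves the core and the form norms up to constants, the Friedrichs extensions are intertwined as well.

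\emph{Discrete, simple spectrum.} For discreteness, the form bound above shows that the form domain embeds into
\[
\Bigl\{h : \int \bigl(|h'|^2 - \tfrac{|h|^2}{4s^2}\bigr) + \int s^{2q}|h|^2 <\infty\Bigr\}.
\]
We need this embedding to be compact in $L^2$; this is the main technical point. Away from $0$ the weight $s^{2q}\to\infty$ gives tightness at infinity. Near $0$, we first try the splitting
\[
\int \Bigl(|h'|^2-\frac{(1-\varepsilon)|h|^2}{4s^2}\Bigr) \geq \varepsilon\int |h'|^2 ,
\]
obtained by writing $-\frac14 = -\frac{1-\varepsilon}{4}-\frac\varepsilon4$ and applying Hardy to the $\varepsilon$-part. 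This gives $H^1_{loc}$-control on bounded intervals, and Rellich then applies. The splitting fails only if $q(q-2)=-1$, which is impossible for $q>1$. Indeed, $\frac{q(q-2)}{4} > -\frac14$ strictly, so we may keep a positive multiple of $\int|h'|^2$. Combined with the weight at infinity, the Kolmogorov--Riesz criterion yields compact resolvent, i.e.\ discrete spectrum.

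For simplicity, we use that eigenfunctions solve a second-order ODE on $(0,\infty)$. At infinity the potential grows, so the $L^2$ solution space there is one-dimensional (limit point case). Two linearly independent eigenfunctions for the same eigenvalue would have Wronskian zero, since both decay at infinity together with their derivatives. Hence they are proportional, and every eigenvalue is simple.
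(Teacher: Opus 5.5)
Your proof is correct apart from the scaling step. There the computation is wrong, and the sentence that patches it is not a valid inference. With $(U_\mu h)(s)=\mu^{1/2}h(\mu s)$ one gets
\[
(U_\mu^{-1}T_\lambda U_\mu h)(s) = -\mu^2 h''(s) + V_\lambda(s/\mu)h(s) = \mu^{2}\Bigl(-h''(s) + \bigl(c\lambda^2\mu^{-2q-2}s^{2q} + \frac{q(q-2)}{4s^2}\bigr)h(s)\Bigr).
\]
The prefactor is $\mu^{2}$, not $\mu^{-2}$. Your formula combines the prefactor of $U_\mu T_\lambda U_\mu^{-1}$ with the potential coefficient of $U_\mu^{-1}T_\lambda U_\mu$. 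With $\mu=\lambda^{1/(q+1)}$ the correct computation gives $U_\mu^{-1}T_\lambda U_\mu=\lambda^{2/(q+1)}T_1$ directly; this is exactly the paper's rescaling. As written, your identity $U_\mu^{-1}T_\lambda U_\mu=\lambda^{-2/(q+1)}T_1$ would prove $T_\lambda\cong\lambda^{-2/(q+1)}T_1$. No choice of ``direction'' for the conjugation can turn that scalar into $\lambda^{2/(q+1)}$: unitarily equivalent operators have the same spectrum, and $E_1(T_1)>0$. That sentence has to be replaced by the correct computation.

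The other parts are correct, but partly take a different route from the paper.

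For positivity, the paper keeps the Hardy remainder $\frac{q(q-2)}{4s^2}+\frac{1}{4s^2}=\frac{(q-1)^2}{4s^2}$ and splits at $s=1$. This gives the explicit bound $T_\lambda\ge\min\{c\lambda^2,(q-1)^2/4\}$. You discard the remainder and get strict positivity only qualitatively, from compact resolvent plus trivial kernel. For the kernel step you need $t_\lambda(h)\ge c\lambda^2\int s^{2q}|h|^2$ on the whole form domain; this follows from the $C_c^\infty$ version by Fatou's lemma.

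For discreteness, the paper uses an IMS partition at scale $R$. It gets compactness of the inner piece from $D(t_\lambda^{(0,R)})=H^1_0(0,R)$ (Remark~\ref{remark}), bounds the outer piece below by $c_1R^{2q}$, and lets $R\to\infty$. You instead use the same Remark-type estimate $t_\lambda(h)\ge\varepsilon\|h'\|^2+c\lambda^2\int s^{2q}|h|^2$, with $\varepsilon\le\min\{1,(q-1)^2\}$. From it you show directly that the form domain embeds compactly into $L^2(0,\infty)$, via Rellich on $(0,R)$ and the tail bound $\int_R^\infty|h|^2\le R^{-2q}\int s^{2q}|h|^2$. Your route is shorter and self-contained. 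The paper's route has the advantage of introducing the truncated operators $T_\lambda^I$, which it needs later anyway.

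Simplicity is argued as in the paper, via the limit-point case at infinity, and that alone suffices. Your added Wronskian remark, that the derivatives decay at infinity, is not proved, but it is also not needed.
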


\begin{proof}
The positivity follows directly from the one-dimensional Hardy inequality
\begin{equation}
\label{eq: Hardy}
\int\limits_0^\infty |h'|^2\ddd s \geq \int\limits_0^\infty \frac{|h|^2}{4s^2} \ddd s
\end{equation}
that holds for any $h \in C^\infty_c(0,\infty)$. We write
    \begin{align*}
	t_\lambda (h) &= \int\limits_0^\infty |h'|^2 + |h|^2\left(  \frac{(a_+-a_-)^2}{12} \lambda^2 s^{2q}+\frac{q(q-2)}{4s^2} \right) \ddd s \\
    &\geq  \int\limits_0^\infty |h|^2\left( \frac{(a_+-a_-)^2}{12}\lambda^2 s^{2q} + \frac{(q-1)^2}{4s^2} \right) \ddd s
    \geq \int\limits_0^1 |h|^2  \frac{(q-1)^2}{4s^2}  \ddd s + \int\limits_1^\infty |h|^2 \frac{(a_+-a_-)^2}{12}\lambda^2 s^{2q} \ddd s \\
    &\geq \frac{(q-1)^2}{4}\int\limits_0^1 |h|^2 \ddd s + \frac{(a_+-a_-)^2}{12}\lambda^2 \int\limits_1^\infty |h|^2 \ddd s 
    \geq \min \left\{ \frac{(a_+-a_-)^2}{12}\lambda^2 ,\frac{(q-1)^2}{4} \right\} \|h\| ^2_{L^2(0,\infty)},
	\end{align*}
    and conclude, noting that $C^\infty_c$ is dense in the domain of $t_\lambda$ by construction. 
    
    Furthermore, the unitary equivalence can be shown by doing a rescaling $ h(s) \mapsto \sqrt{\alpha} h(\alpha s)$ which gives $ T_\lambda \cong \alpha^2T_{\lambda \alpha^{-q-1}} $, and setting $\alpha = \lambda^{\frac{1}{q+1}}$.
    
\end{proof}
The other properties are shown in Lemma \ref{lem: ess spec empty} and Lemma \ref{lem: EV simple}, but to simplify notation we first introduce a truncated version of $T_\lambda$.
\begin{definition}
    Let $I\subseteq (0,\infty)$ be an interval and set $T_\lambda^I $ as the Friedrichs extension in $L^2(I)$ of
    \[
        C_c^\infty(I) \ni h \mapsto T_\lambda h.
    \]
\end{definition}

\begin{remark}
\label{remark}

	Let $I\subseteq (0,\infty)$ be a bounded interval; then it can be shown that $D(t_\lambda^I) = H^1_0(I)$ holds as the norm induced by $ \langle \cdot,\cdot\rangle_{t_\lambda^I} := \langle \cdot,\cdot \rangle_{L^2(0,I)} + t^I_\lambda(\cdot,\cdot)$ is equivalent to the Sobolev norm on $I$.

	Namely, for any $h \in C^\infty_c(I)$ and any $\delta \in (0, \min\{1,(q-1)^2\})$ one has, on one hand, due to the one-dimensional Hardy inequality (\ref{eq: Hardy}):
    \begin{align*}
    t_\lambda^I(h) &=\int\limits_I |h'|^2 (1-\delta + \delta) + |h|^2\left(\frac{(a_+-a_-)^2}{12}\lambda^2 s^{2q} + \frac{q(q-2)}{4s^2}\right) \ddd s \\
    &\geq \int\limits_I \delta |h'|^2 + |h|^2 \frac{(q-1)^2-\delta}{4s^2} \ddd s
    \geq 
    \int\limits_I\delta |h'|^2 \ddd s.
    \end{align*}
    It follows from the density of $C^\infty_c$ that $ \| h \|_{t_\lambda^I} \geq \sqrt{\delta} \| h\|_{H^1(I)} $ holds. On the other hand, we have for $q \in (1,2)$:
    \[
    t_\lambda^I(h) \leq \int\limits_I |h'|^2 + |h|^2 \frac{(a_+-a_-)^2}{12}\lambda^2 (\sup I)^{2q} \ddd s \quad 
    \Rightarrow \quad \| h\|_{t_\lambda^I} \leq \sqrt{1+\frac{(a_+-a_-)^2}{12} \lambda^2 (\sup I)^{2q}} \|h\|_{H^1(I)},
    \]
    and if $q \geq 2$ holds, we can employ (\ref{eq: Hardy}):
    \[
    t_\lambda^I(h) \leq \int\limits_I \Bigl[1+q(q-2)\Bigr] |h'|^2 + |h|^2 \frac{(a_+-a_-)^2}{12} \lambda^2 (\sup I)^{2q} \ddd s .\]
    Thus,
    \[  
    \| h\|_{t_\lambda^I} \leq \sqrt{(q-1)^2 +\frac{(a_+-a_-)^2}{12} \lambda^2 (\sup I)^{2q}} \|h\|_{H^1(I)}.
    \]
    So in both cases, there exists a constant $c>0$ such that 
    $ \delta \|h\|_{H^1(I)} \leq \| h\|_{t_\lambda^I} \leq c\|h \|_{H^1(I)}$ holds.
\end{remark}

We now finish the proof of Proposition \ref{lem: properties model operator} with the next two Lemmas.

\begin{lemma}
\label{lem: ess spec empty}
	The essential spectrum of $T_\lambda$ is empty. 
\end{lemma}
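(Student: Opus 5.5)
We show that $T_\lambda$ has compact resolvent, which is equivalent to its essential spectrum being empty. The plan is to prove that the form domain $D(t_\lambda)$, equipped with the norm $\|\cdot\|_{t_\lambda}$ induced by $\langle\cdot,\cdot\rangle_{L^2(0,\infty)} + t_\lambda(\cdot,\cdot)$, embeds compactly into $L^2(0,\infty)$. Equivalently, every sequence $(h_k)\subset C_c^\infty(0,\infty)$ with $\|h_k\|_{t_\lambda}\le 1$ has an $L^2$-Cauchy subsequence. By density of $C_c^\infty(0,\infty)$ in $D(t_\lambda)$, this suffices.

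The key estimate is a refinement of the coercivity bound in the proof of Proposition \ref{lem: properties model operator}. Fix $\delta\in(0,\min\{1,(q-1)^2\})$ as in Remark \ref{remark}. Splitting $|h'|^2=(1-\delta)|h'|^2+\delta|h'|^2$ and applying the Hardy inequality (\ref{eq: Hardy}) to the first part gives
\[
t_\lambda(h)\ \ge\ \delta\int_0^\infty |h'|^2\ddd s + \int_0^\infty |h|^2\Bigl(\tfrac{(a_+-a_-)^2}{12}\lambda^2 s^{2q} + \tfrac{(q-1)^2-\delta}{4s^2}\Bigr)\ddd s .
\]
This bound controls $\|h'\|_{L^2}$, and it also shows that the mass of $h$ is small at both ends of the half-line. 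For $0<\varepsilon<R$ we get
\[
\int_0^\varepsilon |h|^2\ddd s \le \frac{4\varepsilon^2}{(q-1)^2-\delta}\, t_\lambda(h), \qquad \int_R^\infty |h|^2\ddd s \le \frac{12}{(a_+-a_-)^2\lambda^2R^{2q}}\, t_\lambda(h).
\]
Both constants tend to $0$ as $\varepsilon\to0$ and $R\to\infty$, uniformly over the unit ball of $\|\cdot\|_{t_\lambda}$.

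On the bounded middle interval $(\varepsilon,R)$, the restrictions $h_k|_{(\varepsilon,R)}$ are bounded in $H^1(\varepsilon,R)$, since both $\|h_k'\|$ and $\|h_k\|$ are controlled. Rellich's theorem on a bounded interval therefore gives an $L^2(\varepsilon,R)$-convergent subsequence. A diagonal argument over $\varepsilon=1/m$, $R=m$, combined with the uniform tail bounds, yields a subsequence that is Cauchy in $L^2(0,\infty)$. Hence the embedding is compact, the resolvent of $T_\lambda$ is compact, and $\spece(T_\lambda)=\emptyset$.

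The only delicate point is the behaviour near $s=0$ when $q<2$, where the potential $q(q-2)/(4s^2)$ is negative. This is exactly why Hardy's inequality is used to absorb the negative term while keeping a positive multiple $\delta$ of both $|h'|^2$ and $|h|^2/s^2$; the condition $q>1$ guarantees $(q-1)^2>0$, so such a $\delta$ exists. As an alternative, one could use the characterization $\spece(T_\lambda)\subset[\Lambda,\infty)$, where $\Lambda$ is the infimum of the form on functions supported in $(0,\varepsilon)\cup(R,\infty)$. The same two tail estimates show this infimum tends to $+\infty$, which gives the result as well.
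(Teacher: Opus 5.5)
Your argument is correct, but it takes a different route from the paper. The paper does not prove compactness of the form embedding directly. It performs an IMS localization with cutoffs $\chi_R,\tilde\chi_R$ subordinate to $(0,R)$ and $(R/2,\infty)$, and via Lemma \ref{comparing operators} obtains $\Lambda_n(T_\lambda)+c_0R^{-2}\ge \Lambda_n(T_\lambda^{(0,R)}\oplus T_\lambda^{(R/2,\infty)})$. It then uses Remark \ref{remark} ($D(t_\lambda^{(0,R)})=H^1_0(0,R)$) to see that the inner piece has compact resolvent. The outer piece is bounded below by $c_1R^{2q}$ using Hardy and the growth of $s^{2q}$, and letting $R\to\infty$ pushes $\inf\spece T_\lambda$ to $+\infty$.

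You instead show directly that the form domain embeds compactly into $L^2(0,\infty)$. Hardy-based coercivity gives a uniform $H^1$ bound, the $s^{2q}$ weight gives uniform smallness of the tail at infinity, and Rellich plus a diagonal argument does the rest. Your route buys the following:
\begin{itemize}
\item it is self-contained and yields compact resolvent in one stroke;
\item it avoids the IMS error term and the passage from Rayleigh quotients to $\inf\spece$, which the paper treats rather briefly;
\item it needs only the coercive lower bound from Remark \ref{remark}, not the identification of the truncated form domain with $H^1_0$.
\end{itemize}
The paper's route, on the other hand, reuses the localization-and-comparison machinery that drives all later sections. It also makes the mechanism quantitative: an exterior piece whose spectrum is bounded below by a multiple of $R^{2q}$.

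One small simplification in yours: the cutoff at $\varepsilon$ near $0$ is unnecessary. The bound $\delta\|h_k'\|^2\le t_\lambda(h_k)\le 1$ already makes $(h_k)$ bounded in $H^1(0,R)$, and Rellich applies directly on the bounded interval $(0,R)$. The real role of Hardy's inequality near $0$ is to give that derivative bound, and the nonnegativity of the remaining potential, despite the negative term $q(q-2)/(4s^2)$. It is not needed to make the mass near $0$ small.
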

\begin{proof}
	 We do an IMS partition. Let
	\[
	\chi, \tilde\chi \in C^\infty , \quad 0\leq \chi,\tilde \chi \leq 1, \quad \chi^2 + \tilde\chi^2 \equiv 1, \quad
	\chi(x) = \begin{cases}
		1, & x \leq \frac{1}{2} \\
		0, & x \geq 1
	\end{cases}   ,
	\]
	and set $\chi_R(x) = \chi(x/R)$, $\tilde \chi _R (x) = \tilde \chi (x/R)$ then since we have 
	$\supp \chi_R \subseteq [0,R]$ and $\supp \tilde\chi_R \subseteq [R/2,\infty)$ the operator  
    $J: C^\infty_c(0,\infty) \to C^\infty_c(0,R) \oplus C^\infty_c(R/2,\infty) , \ h \mapsto (\chi_R h, \tilde \chi_R h)$
    is well-defined, fulfills $\|Jh\|_{L^2} = \|h\|_{L^2}$ for all $h \in C^\infty_c(0,\infty)$, and one has due to Lemma \ref{lem: IMS partition} for some $c_0 >0$
    \[
    t_\lambda (h)  
    = t_\lambda^{(0,R)}(\chi_r h) + t_\lambda^{(R/2, \infty)}(\tilde \chi_R h) - \frac{c_0}{R ^2} \|h \|^2_{L^2(0,\infty)} 
    = \left(t_\lambda^{(0,R)} \oplus t_\lambda^{(R/2,\infty)} \right) (Jh) - \frac{c_0}{R ^2} \|h \|^2_{L^2(0,\infty)} .
    \]
    Lemma \ref{comparing operators} then implies
	\begin{equation}
    \label{eq: end me}
	\Lambda_n(T_\lambda) + \frac{c_0}{R^2} \geq \Lambda_n(T_\lambda^{(0,R)} \oplus T_\lambda^{(R/2,\infty)}).
	\end{equation}
	Note, that due to Remark \ref{remark} one has $D(t_\lambda^{(0,R)}) = H^1_0(0,R)$ for any $R>0$ and since the embedding $H^1_0(0,R) \hookrightarrow L^2(0,R) $ is compact, $T_\lambda^{(0,R)}$ has compact resolvent and therefore no essential spectrum.
	
	On the other hand, we can employ for any $h\in C^\infty_c(R/2,\infty)$ the Hardy inequality  (\ref{eq: Hardy}):
	\[
	t_\lambda^{(R/2,\infty)} (h) \geq \int\limits^\infty_{R/2} |h|^2 \left(\frac{(a_+-a_-)^2 }{12} \lambda^2 s^{2q} + \frac{(q-1)^2}{4s^2} \right) \ddd s \geq \frac{(a_+-a_-)^2}{12} \lambda^22^{{-2q}} R^{2q} \|h\|^2_{L^2(R/2,\infty)}.
	\]
	Thus, $\inf \spece T_\lambda^{(R/2,\infty)} $ is bounded from below by $ c_1 R^{2q}$ for some $c_1>0$ and we conclude by noting that due to (\ref{eq: end me}) it holds
    \[
	\inf \spece T_\lambda = \lim\limits_{R \to + \infty} c_1R^{2q} = + \infty. \qedhere
	\]
\end{proof}

\begin{lemma}
\label{lem: EV simple}
	All eigenvalues of $T_\lambda$ are simple.
\end{lemma}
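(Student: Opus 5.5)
The plan is the standard ODE uniqueness argument: two linearly independent eigenfunctions for the same eigenvalue would have a nonzero constant Wronskian, and I will show that the form-domain behaviour at $s=0$ and at $s=\infty$ forces the Wronskian to vanish.

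\emph{Setting up.} Let $E$ be an eigenvalue of $T_\lambda$ and $u$ an eigenfunction. Since the potential $V(s)=\frac{(a_+-a_-)^2}{12}\lambda^2 s^{2q}+\frac{q(q-2)}{4s^2}$ is smooth on $(0,\infty)$, local elliptic regularity shows that $u$ is a smooth solution of $-u''+Vu=Eu$ on $(0,\infty)$. Hence the eigenspace of $E$ is contained in the two-dimensional solution space of this ODE. Suppose $u,v$ are two linearly independent eigenfunctions for $E$. Their Wronskian $W=uv'-u'v$ is then a nonzero constant on $(0,\infty)$, and it remains to show $W(s)\to0$ along some sequence.

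\emph{Behaviour at infinity.} I will use the form domain. Both $u$ and $v$ lie in $D(t_\lambda)$, so $u',v'\in L^2(0,\infty)$ and $u,v\in L^2(0,\infty)$; indeed $s^q u, s^q v\in L^2$ as well. Then $|W|\le |u||v'|+|u'||v|$ is integrable on $(1,\infty)$ by Cauchy--Schwarz. A nonzero constant is not integrable on $(1,\infty)$, so $W=0$, which is a contradiction. Only the $H^1$-type control at infinity is needed here, which is available from the lower bound computed in Remark \ref{remark} with $\delta>0$, since that bound also holds on $(0,\infty)$ for $C_c^\infty$ functions.

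\emph{Where care is needed.} The one step needing justification is that $u'\in L^2(0,\infty)$ for $u\in D(t_\lambda)$. The form is $t_\lambda(h)\ge\delta\|h'\|^2$ on $C_c^\infty(0,\infty)$ by the Hardy argument of Remark \ref{remark}. Closing the form, any $u\in D(t_\lambda)$ is an $L^2$-limit of $h_k\in C_c^\infty$ that is Cauchy in the form norm. Hence $h_k'$ is Cauchy in $L^2$, its limit is the weak derivative of $u$, and $u'\in L^2(0,\infty)$.

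As an alternative, or as a check, I could argue at $s=0$ instead: solutions behave like $s^{q/2}$ or $s^{1-q/2}$ (for $q\neq1$), and the Hardy-type lower bound excludes the $s^{1-q/2}$ branch from the form domain, leaving a one-dimensional space. The argument at infinity avoids the Frobenius analysis, so I would present that one.
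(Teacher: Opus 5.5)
Your proof is correct, but it takes a different route from the paper's. Both arguments work at $s=\infty$; they differ in what they use about the eigenfunctions there.

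\textbf{The paper's route.} The paper never uses derivatives of eigenfunctions. It only uses that an eigenfunction is a square-integrable solution of $-\psi''+V\psi=E\psi$ on $(1,\infty)$, and that $V$ is bounded below there. It then cites the limit-point criterion and the Weyl alternative from Sturm--Liouville theory. These give that, for every $E$, the solutions that are $L^2$ near infinity form an at most one-dimensional space.

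\textbf{Your route.} You use that eigenfunctions of the Friedrichs extension lie in the form domain. The Hardy-based bound $t_\lambda(h)\ge\delta\|h'\|^2$ on $C_c^\infty(0,\infty)$ passes to the closure, which gives $u',v'\in L^2(0,\infty)$. Then the Wronskian of two independent eigenfunctions would be a nonzero constant lying in $L^1(1,\infty)$, which is impossible. The closure step you spell out is exactly the justification needed.

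\textbf{What each buys.}
\begin{itemize}
\item Your argument is self-contained and elementary. It is essentially a direct proof of the limit-point property for functions with square-integrable derivatives, and avoids the external theorems.
\item The paper's argument is shorter given the references, and needs less information about the eigenfunctions: only square integrability near $\infty$. So it applies verbatim to any self-adjoint realisation of the differential expression, not only the Friedrichs one. This matters because for $1<q<3$ both local solutions $s^{q/2}$ and $s^{1-q/2}$ are square integrable at $0$, so that endpoint is limit circle and other realisations exist. Your argument, as written, uses the Friedrichs form domain.
\end{itemize}
Your alternative at $s=0$ would also work for the Friedrichs extension, since $(s^{1-q/2})'\notin L^2$ near $0$. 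As you note, it requires the Frobenius analysis, so presenting the argument at infinity is the better choice.
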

\begin{proof}
    Let $\psi$ be an eigenfunction of $T_\lambda$ with associated eigenvalue $E$ then $\psi$ solves the differential equation
    \begin{equation}
    \label{eq: Last one}
    -\psi'' + \psi \left( \frac{(a_+-a_-)^2}{12}\lambda^2 s^{2q} + \frac{q(q-2)}{4s^2}\right) = E \psi,
    \end{equation}
    which is square integrable on $(0,\infty)$ and therefore square integrable on $(1,\infty)$ as well. Note that there exists $c \in \R$ such that
    \begin{equation*}
    \left( \frac{(a_+-a_-)^2}{12}\lambda^2 s^{2q} + \frac{q(q-2)}{4s^2}\right) \geq c
    \end{equation*}
    holds for all $s\in(1,\infty)$. This implies that we have the limit point case at infinity (see \cite[Theorem 6.6]{Weidman_1987}), and it follows that for any $E\in \R$ the space of solutions to (\ref{eq: Last one}) that are integrable on $(1,\infty)$ is at most one-dimensional (\cite[Theorem 5.6]{Weidman_1987}).
\end{proof}

We proceed with the analysis of the cropped operator $T_\lambda^I$ by showing an upper and a lower bound for $T_\lambda^I$ if $I$ is a bounded interval not containing $0$.

\begin{lemma}
\label{lem: model operator middle piece}
    Let $b_1,b_2 >0$ and $0 \leq \eta< \kappa< \frac{1}{q+1} $ and set  $I = (b_1\lambda^{-\kappa},b_2 \lambda^{-\eta})$ for sufficiently large $\lambda$.
    Then there exist $C_1,C_2 >0$ such that
    \[
    C_1\lambda^{2(1-q \kappa)} \leq E_1 (T_\lambda^I) \leq C_2 \lambda^{2(1-q \kappa)}
    \]
    holds as $\lambda \to +\infty$.
\end{lemma}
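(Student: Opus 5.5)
The plan is to bound the quadratic form $t_\lambda^I$ pointwise from below and from above. For the upper bound we construct an explicit test function adapted to the left end of $I$.

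\textbf{Lower bound.} For $h\in C_c^\infty(I)$ we first discard the kinetic and Hardy parts. Write $|h'|^2 = (1-\delta)|h'|^2+\delta|h'|^2$ and use the Hardy inequality (\ref{eq: Hardy}) on $h$, extended by zero to $(0,\infty)$. Exactly as in Remark \ref{remark}, this shows that the terms $|h'|^2 + \frac{q(q-2)}{4s^2}|h|^2$ integrate to something nonnegative. Hence
\[
t_\lambda^I(h)\ \geq\ \frac{(a_+-a_-)^2}{12}\lambda^2\int_I s^{2q}|h|^2\ddd s\ \geq\ \frac{(a_+-a_-)^2}{12}\lambda^2 (b_1\lambda^{-\kappa})^{2q}\|h\|^2_{L^2(I)}.
\]
This gives $E_1(T_\lambda^I)\geq C_1\lambda^{2(1-q\kappa)}$ by density of $C_c^\infty(I)$ in $D(t_\lambda^I)$ (Remark \ref{remark}).

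\textbf{Upper bound.} By the Min-Max principle it suffices to find one $h\in H^1_0(I)$ with $t_\lambda^I(h)\leq C_2\lambda^{2(1-q\kappa)}\|h\|^2$. We take a fixed bump $\varphi\in C_c^\infty(1,2)$, $\varphi\neq 0$, and rescale it to sit at the left endpoint:
\[
h(s)=\varphi\bigl(s/(b_1\lambda^{-\kappa})\bigr).
\]
Since $\eta<\kappa$, we have $2b_1\lambda^{-\kappa}<b_2\lambda^{-\eta}$ for $\lambda$ large, so $\supp h\subset I$. On $\supp h$ we have $s\sim\lambda^{-\kappa}$, so the three terms scale as follows:
\begin{itemize}
\item the kinetic term $\int|h'|^2/\int|h|^2$ is of order $\lambda^{2\kappa}$;
\item the term $\frac{q(q-2)}{4s^2}$ is bounded in absolute value by a constant times $\lambda^{2\kappa}$;
\item the potential term $\lambda^2 s^{2q}$ is of order $\lambda^{2-2q\kappa}$.
\end{itemize}
Therefore $t_\lambda^I(h)\leq C(\lambda^{2\kappa}+\lambda^{2(1-q\kappa)})\|h\|^2$.

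\textbf{Main check.} The only genuine point is comparing the two exponents. The condition $\kappa<\frac{1}{q+1}$ is equivalent to $2\kappa<2(1-q\kappa)$, so $\lambda^{2\kappa}$ is dominated by $\lambda^{2(1-q\kappa)}$ for large $\lambda$. This yields the upper bound with some $C_2>0$. Note that when $q<2$ the Hardy-type term is negative and can simply be dropped from the upper bound, so it causes no difficulty in either case.
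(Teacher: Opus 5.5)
Your proposal is correct and follows the paper's proof essentially step for step. For the lower bound, both use the Hardy inequality to make $|h'|^2+\frac{q(q-2)}{4s^2}|h|^2$ integrate to something nonnegative, then apply $s^{2q}\geq (b_1\lambda^{-\kappa})^{2q}$ on $I$. For the upper bound, both rescale a fixed bump to $(b_1\lambda^{-\kappa},2b_1\lambda^{-\kappa})$, whose kinetic and Hardy-type contributions are $\mathcal{O}(\lambda^{2\kappa})$ and are dominated by the potential contribution $\mathcal{O}(\lambda^{2(1-q\kappa)})$ exactly because $\kappa<\frac{1}{q+1}$; your $\delta$-splitting is unnecessary, and your explicit check that $\supp h\subset I$ via $\eta<\kappa$ is a detail the paper leaves implicit.
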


\begin{proof}
    For the first inequality, we write for any $h\in C^\infty_c(I)$ with the help of the Hardy inequality (\ref{eq: Hardy})
    \begin{align}
    \label{eq: lower bound tlambdaI}
    \begin{split}
    t_\lambda^I(h) &= \int\limits_I |h'|^2 + |h|^2\left(\frac{(a_+-a_-)^2}{12 } \lambda^2 s^{2q} + \frac{q(q-2)}{4 s^2}\right) \ddd s \\
    &\geq \int\limits_I |h|^2 \left( \frac{(a_+-a_-)^2}{12 } \lambda^2 s^{2q } + \frac{(q-1)^2}{4s^2} \right) \ddd s \\
    &\geq \frac{(a_+-a_-)^2}{12} \int\limits_{b_1 \lambda^{-\kappa}}^{b_2 \lambda^{-\eta}}|h|^2 \lambda^2 s^{2q} \ddd s 
    \geq \frac{(a_+-a_-)^2}{12} b_1^{{2q}} \lambda^{2(1-q \kappa)} \| h\|^2_{L^2(I)}
    \end{split}
    \end{align}
    and conclude using Min-Max. To show the second inequality, let $\psi \in C^\infty_c(\R)$ with $\supp \psi \subseteq [1,2]$ and $\| \psi\|_{L^2(\R)} =1$. 
    Through a rescaling, we define the function $\psi_\lambda := \psi(s\lambda^\kappa b_1^{-1}) \in H^1_0(I)$ that has norm $\| \psi_\lambda \|^2_{L^2(0,\infty)} = \lambda^{-\kappa}b_1 \| \psi\|^2_{L^2(\R)} = \lambda^{-\kappa}b_1$. We write  
    \[
    \int\limits_I |\partial_s(\psi_\lambda)|^2 \ddd s 
    = \int\limits_I b_1^{-2} \lambda^{2 \kappa} |\psi'(s\lambda^\kappa b_1^{-1})|^2 \ddd s 
    = b_1^{-2} \lambda^{2\kappa} (\lambda^{-\kappa}b_1) \| \psi' \|^2_{L^2(\R)}   = b_1^{-2}  \lambda^{2\kappa}  \|\psi' \|^2_{L^2(\R)} \| \psi_\lambda \|^2_{L^2(I)}.
    \]  
    By combining this with the inequality $ \kappa < 1-q\kappa $ that holds for any $\kappa \in (0,\frac{1}{q+1})$ we get for some $c_1>0$ 
    \begin{align*}
    t_\lambda^I (\psi_\lambda) 
    &\leq \int\limits_{b_1 \lambda^{- \kappa}}^{2b_1 \lambda^{-\kappa}} |\partial_s(\psi_\lambda) |^2 + |\psi_\lambda|^2 \left(   \frac{(a_+-a_-)^2}{12} \lambda^2 s^{2q} + \frac{|q(q-1)|}{4 s^2} \right) \ddd s \\
    &\leq \int\limits_{b_1 \lambda^{- \kappa}}^{2b_1  \lambda^{-\kappa}}  |\psi_\lambda|^2 \left[ \left(\frac{|q(q-1|)}{4b_1^2} +  b_1^{-2}   \|\psi' \|^2_{L^2(\R)}\right) \lambda^{2 \kappa} + \frac{(a_+-a_-)^2}{12} (2b_1)^{2q} \lambda^{2(1-q \kappa)}\right] \ddd s \\
    &\leq c_1 \lambda^{2(1-q\kappa)} \|\psi_\lambda \|^2_{L^2(I)}.
    \end{align*}
    The Min-Max principle then implies $E_1(T_\lambda^I) \leq c_1 \lambda^{2(1-q\kappa)}$ which concludes the proof. 
\end{proof}

Lastly, we compare $T_\lambda^I$ with $T_\lambda$ for a specific choice of $I$.

\begin{lemma}
\label{lem: model operator comparison cropped}
    Let $n\in \N$, $\kappa \in(0, \frac{1}{q+1}) $ and $I = (0,\lambda^{-\kappa})$  then there exists $C >0$ such that
    \begin{equation}
    \label{eq: inequality 1 dim}
        \lambda^{\frac{2}{q+1}} E_n(T_1) 
        \leq E_n(T_\lambda^{I}) 
        \leq \lambda^{\frac{2}{q+1}} \left(E_n(T_1) + C \lambda^{2(\kappa-\frac{1}{q+1})}\right) = \lambda^{\frac{2}{q+1}} E_n(T_1) + C \lambda^{2\kappa}
    \end{equation}
    holds as $\lambda \to +\infty$.
\end{lemma}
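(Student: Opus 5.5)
The lower bound follows from domain monotonicity. The upper bound comes from cutting off the eigenfunctions of the rescaled operator and using exponential decay.

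\textbf{Lower bound.} Every $h\in C^\infty_c(I)$ extends by zero to an element of $C^\infty_c(0,\infty)$, with the same norm and the same form value. Apply Lemma \ref{comparing operators} with $J$ the extension by zero. This gives $\Lambda_n(T_\lambda)\le \Lambda_n(T_\lambda^I)$. Since $T_\lambda$ has purely discrete spectrum (Lemma \ref{lem: ess spec empty}), and $T_\lambda^I$ has compact resolvent by Remark \ref{remark}, the Rayleigh quotients are the eigenvalues. Proposition \ref{lem: properties model operator} then gives $E_n(T_\lambda)=\lambda^{\frac{2}{q+1}}E_n(T_1)$, hence the left inequality.

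\textbf{Upper bound.} First rescale. Under $h(s)\mapsto \alpha^{1/2}h(\alpha s)$ with $\alpha=\lambda^{\frac1{q+1}}$, the operator $T_\lambda^{(0,\lambda^{-\kappa})}$ is unitarily equivalent to $\lambda^{\frac{2}{q+1}}T_1^{(0,L)}$, where $L=\lambda^{\frac1{q+1}-\kappa}\to\infty$. It therefore suffices to show $E_n(T_1^{(0,L)})\le E_n(T_1)+C L^{-2}$, since $L^{-2}=\lambda^{2(\kappa-\frac1{q+1})}$.

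Let $\psi_1,\dots,\psi_n$ be orthonormal eigenfunctions of $T_1$. Take a cutoff $\chi_L(s)=\chi(s/L)$ as in Lemma \ref{lem: ess spec empty}, equal to $1$ on $[0,L/2]$ and supported in $[0,L)$. Use the trial space $U=\spann\{\chi_L\psi_j\}$, which lies in $D(t_1^{(0,L)})$ because the cutoff only acts away from $0$. For $u=\chi_L\psi$ with $\psi\in\spann\{\psi_j\}$, the IMS-type identity (the one-dimensional analogue of Lemma \ref{lem: IMS partition}) gives
\[
t_1(\chi_L\psi)=\mathrm{Re}\,\langle \chi_L^2\psi, T_1\psi\rangle+\|\chi_L'\psi\|^2\le \mathrm{Re}\,\langle \chi_L^2\psi,T_1\psi\rangle+\frac{c}{L^2}\|\psi\|^2 .
\]
Since $T_1\psi$ stays in the span, the first term is at most $E_n(T_1)\|\chi_L\psi\|^2$ up to errors controlled by the tail mass $\int_{L/2}^\infty(|\psi|^2+|T_1\psi|^2)$.

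\textbf{Main obstacle.} The remaining work is to show that these tail errors, and the defect $1-\|\chi_L\psi\|^2$ (which matters for the dimension count and normalisation), are $\mathcal O(L^{-2})$. I would get this from decay of the eigenfunctions. For $s\ge s_0$ the potential satisfies $\frac{(a_+-a_-)^2}{12}s^{2q}\ge E_n(T_1)+1$, so an Agmon estimate (or, more simply, an ODE comparison) gives exponential decay of $\psi_j$, for example $\int_{L/2}^\infty |\psi_j|^2\le Ce^{-cL}$. A softer alternative avoids exponential decay altogether: the bound $\int s^{2q}|\psi_j|^2\le C$, read off from the quadratic form, already gives a tail of order $L^{-2q}\le L^{-2}$ because $q>1$.

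With these bounds the Gram matrix of the $\chi_L\psi_j$ is $\mathrm{Id}+\mathcal O(L^{-2})$, so $\dim U=n$ for large $\lambda$, and the Rayleigh quotient on $U$ is at most $E_n(T_1)+CL^{-2}$. Min-Max then finishes the proof.
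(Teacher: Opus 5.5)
Your lower bound is the paper's argument (extension by zero, Lemma \ref{comparing operators}, and $E_n(T_\lambda)=\lambda^{\frac{2}{q+1}}E_n(T_1)$). Your upper bound is correct, but it takes a different route from the paper.

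The paper never uses eigenfunctions. It performs an operator-level IMS splitting of $(0,\infty)$ into $I=(0,\lambda^{-\kappa})$ and $I_1=(\lambda^{-\kappa}/4,\infty)$, which gives $E_n(T_\lambda^{I}\oplus T_\lambda^{I_1})\le E_n(T_\lambda)+c_0\lambda^{2\kappa}$. It then uses only the potential bound $T_\lambda^{I_1}\ge c_1\lambda^{2(1-q\kappa)}\gg\lambda^{\frac{2}{q+1}}$ to conclude that the first $n$ eigenvalues of the direct sum all come from $T_\lambda^{I}$.

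You instead rescale to $T_1^{(0,L)}$ and build an explicit $n$-dimensional trial space from cut-off eigenfunctions of $T_1$. This requires tail control of the $\psi_j$. Your moment bound $\int_0^\infty s^{2q}|\psi_j|^2\,ds\le C$, obtained from $t_1(\psi_j)=E_j(T_1)$ and the Hardy inequality exactly as in the positivity proof, is enough; no Agmon estimate is needed.

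The paper's route needs less information, namely only a lower bound on the potential away from $0$, but it loses the full localisation error $c_0\lambda^{2\kappa}$ by construction. Your quasimode route gives more than you claim. Since $\chi_L'$ is supported in $[L/2,L]$, the term $\|\chi_L'\psi\|^2$ is at most $cL^{-2}\int_{L/2}^\infty|\psi|^2$. So every error term is controlled by tail mass and is $\mathcal O(L^{-2q})$, which after rescaling is $o(\lambda^{2\kappa})$; with Agmon estimates it becomes even $\lambda^{\frac{2}{q+1}}\mathcal O(e^{-cL})$. This does not improve the remainder in Theorem \ref{main theo}, however, because the IMS partition of $\Omega$ in the lower bound already costs $\lambda^{2\kappa}$.
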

\begin{proof}
    For the first inequality of (\ref{eq: inequality 1 dim}) we set $ J_1:C^\infty_c(0, \lambda^{-\kappa}) \to C^\infty_c(0,\infty)$ as the continuation with $0$ onto $(0,\infty)$.
    As one has $ \| J_1h \|^2_{L^2(0,\infty)} = \|h\|^2_{L^2(I)} $ and $ t_\lambda(J_1h) = t^I_\lambda(h) $, Lemma
    \ref{comparing operators} and Proposition \ref{lem: properties model operator} give
    \[
    \lambda^{\frac{2}{q+1}}E_n(T_1) =E_n(T_\lambda) \leq E_n(T_\lambda^I).
    \]

    For the second inequality, let $\chi, \tilde\chi \in C^\infty(\R_+)$ such that $\chi^2 + \tilde\chi^2 =1$ and $\chi(x) = 0$ for $x > \frac{3}{4}$ and $\tilde\chi(x) = 0$ for $x < \frac{1}{2}$ holds. Set
    \[
    \chi_\lambda(s) := \chi(s \lambda^\kappa), \quad \tilde\chi_\lambda(s) := \tilde \chi (s \lambda^\kappa).
    \]
    We do an IMS partition. Set $c_0:= \|\chi'\|^2_\infty + \| \tilde\chi' \| ^2_\infty$ and $I_1:= (\lambda^{-\kappa}/4,\infty )$
    as well as
    \[J_2:C_c^\infty(0, \infty) \to C_c^\infty(I) \oplus C_c^\infty(I_1),  \quad J_2(h) = (\chi_\lambda h, \tilde \chi_\lambda h).
    \] 
    Note that one has $ \| h\|_{L^2} = \|J_2 h \|_{L^2}$ for any $h \in C^\infty_c$ and due to Lemma \ref{lem: IMS partition}
    \[
    t_\lambda(h) +c_0  \lambda^{2\kappa} \| h\|^2_{L^2(0,\infty)} \geq (t_\lambda^I \oplus t_\lambda^{I_1})(J_2h).
    \]
    Therefore, using Lemma \ref{comparing operators} we have
    \[
    E_n(T_\lambda) + c_0\lambda^{2 \kappa}\geq E_n(T_\lambda^{I} \oplus T_\lambda^{I_1}).
    \]
    Analogue to (\ref{eq: lower bound tlambdaI}) one can show that there exists $c_1 >0$ such that for sufficiently large $\lambda$ it holds
    \[
    T_\lambda^{I_1} \geq c_1 \lambda^{2(1-q\kappa)} .
    \]
    By combining this with $ E_n(T_\lambda) +c \lambda^{2\kappa} = \lambda^{\frac{2}{q+1}}(E_n(T_1) + c_0 \lambda^{2(\kappa-\frac{1}{q+1})})$ and noting that for any $\kappa \in (0,\frac{1}{q+1})$ one has $2(1-q\kappa) >2/(q+1) $ as well as $ 2(\kappa-1/(q+1))<0 $ one can deduce that, as $\lambda \to + \infty$, it holds
    \[
     \lambda^{\frac{2}{q+1}}E_n(T_1) + c_0 \lambda^{2\kappa}=E_n(T_\lambda) +c_0 \lambda^{2\kappa} \geq E_n(T_\lambda^{I}). \qedhere
    \]
    
\end{proof}

\subsection{2-dimensional model operators}
We will need some general results about the magnetic Laplacian in $\R^2$ and start by fixing a choice of the magnetic potential $A$ associated with a constant magnetic field $B=1$.

\begin{definition}
For an open, bounded domain $U \subseteq \R^2$ with $C^0$ class boundary, we set
    \begin{align*}
    &n^U_\lambda(f) = \int\limits_U | \partial_x f|^2 + |\partial_y f - i \lambda x f|^2 \ddd x \ddd y, \quad D(n^U_\lambda) = H^1(U), \\
    &d^U_\lambda(f) = \int\limits_U | \partial_x f|^2 + |\partial_y f - i \lambda x f|^2 \ddd x \ddd y, \quad D(d^U_\lambda) = H^1_0(U). 
    \end{align*}
	As usual, $N_\lambda^U$ and $D_\lambda^U$ refer to the operators generated by these quadratic forms, which are the magnetic Laplacians on $U$ with Neumann/Dirichlet boundary conditions at $\partial U$.  
\end{definition}

Let us first consider some basic properties of a slightly altered version of the magnetic Laplacian on the half-plane with Neumann boundary conditions.

\begin{lemma}
\label{lem: 2-dim model op}
Set $\R^2_+ := \{(x,y) \in \R^2 \mid  y>0\}$. Then 
$N_\lambda^{\R^2_+}$ is unitary equivalent to the operator generated by
\[
\tilde n_\lambda^{\R^2_+} (f) := \int\limits_{\R^2_+} |\partial_y f|^2 + |\partial_x f +i\lambda yf|^2 \ddd x \ddd y , \quad D(\tilde n_\lambda^{\rr^2_+}) = H^1(\R^2_+)
\]
and it holds
\[
N_\lambda^{\R^2_+} \geq \Theta_0 \lambda
\]
where $\Theta_0$ is the de-Gennes constant defined via
\[
\Theta_0 := \inf\limits_{\xi \in \R} E_1(D_\xi) \in (0,1).
\]
Here, $D_\xi = -\partial_t^2 + (t+\xi)^2$ is the harmonic oscillator on the half-axis with Neumann boundary conditions at $0$.
\end{lemma}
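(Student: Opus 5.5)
We need to prove Lemma \ref{lem: 2-dim model op}, which has two parts: the unitary equivalence and the lower bound $N_\lambda^{\R^2_+} \geq \Theta_0\lambda$.

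\textbf{Unitary equivalence.} The plan is to realize it by an explicit gauge transformation, followed by a rotation/reflection of coordinates exchanging the roles of $x$ and $y$. The quadratic form $n_\lambda^{\R^2_+}$ uses the potential $A=(0,x)$, while $\tilde n_\lambda^{\R^2_+}$ corresponds to the potential $(-y,0)$. Both have curl equal to $1$, and $(0,x)-(-y,0)=\nabla(xy)$. So multiplication by $e^{i\lambda xy}$ is a unitary map on $L^2(\R^2_+)$ that preserves $H^1(\R^2_+)$ (locally; on the unbounded half-plane one checks it on a core and passes to closures). Denote by $U$ the operator $f\mapsto e^{-i\lambda xy}f$. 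A direct computation gives
\[
(\partial_y - i\lambda x)(e^{i\lambda xy}g)=e^{i\lambda xy}\partial_y g, \qquad \partial_x(e^{i\lambda xy}g)=e^{i\lambda xy}(\partial_x+i\lambda y)g .
\]
Hence $n_\lambda^{\R^2_+}(e^{i\lambda xy}g)=\tilde n_\lambda^{\R^2_+}(g)$ for all $g$ in the common form domain. Equality of the forms, together with equality of the form domains under $U$, yields $U N_\lambda^{\R^2_+} U^{-1} = \tilde N_\lambda^{\R^2_+}$. The domain issue is the only delicate point: $e^{i\lambda xy}$ is unbounded in derivative on the half-plane, so $H^1$ is not literally preserved. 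I would therefore work with the maximal form domain $\{f\in L^2 : (\nabla - i\lambda A)f\in L^2\}$, which is the natural domain of the Neumann magnetic form on unbounded sets. This domain is mapped bijectively by $U$.

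\textbf{Lower bound.} Work with $\tilde n_\lambda$. First rescale $(x,y)\mapsto \lambda^{-1/2}(x,y)$; this gives $\tilde N_\lambda \cong \lambda \tilde N_1$, so it suffices to treat $\lambda=1$. Then take the partial Fourier transform in $x$, with dual variable $\xi$. It turns $\partial_x+iy$ into $i(\xi+y)$, so
\[
\tilde n_1(g)=\int_\R\int_0^\infty |\partial_y\hat g(\xi,y)|^2+(y+\xi)^2|\hat g(\xi,y)|^2\ddd y\ddd\xi .
\]
For almost every $\xi$, the inner integral is the quadratic form of $D_\xi$ evaluated at $\hat g(\xi,\cdot)\in H^1(0,\infty)$; Neumann conditions arise since no boundary condition is imposed. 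It is therefore bounded below by $E_1(D_\xi)\|\hat g(\xi,\cdot)\|^2\geq\Theta_0\|\hat g(\xi,\cdot)\|^2$. Integrating in $\xi$ and using Plancherel gives $\tilde n_1(g)\geq\Theta_0\|g\|^2$, hence $N_\lambda^{\R^2_+}\geq \Theta_0\lambda$.

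That $\Theta_0\in(0,1)$ is the classical fact about the de Gennes function. Positivity follows because $\xi\mapsto E_1(D_\xi)$ is continuous and positive, and tends to $+\infty$ as $\xi\to+\infty$ and to $1$ as $\xi\to-\infty$. The bound $\Theta_0<1$ follows from testing $D_\xi$ at $\xi=-\sqrt{\cdot}$ with a suitable half-Gaussian. I would cite this from \cite{Fournais_2010} rather than reprove it.

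The main obstacle is only the rigorous justification of the fibre decomposition. One needs that for $g$ in the form domain, $\hat g(\xi,\cdot)$ lies in the form domain of $D_\xi$ for a.e. $\xi$. It suffices to prove the inequality on the dense core $C_c^\infty(\overline{\R^2_+})$ and extend by closedness.
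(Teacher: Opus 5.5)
Your proposal is correct and is essentially the argument the paper delegates to its citations. The paper only says the equivalence follows from a change of variables, which is your gauge transformation by $e^{i\lambda xy}$; no rotation is actually needed, as your own computation shows. For the bound it cites Fournais--Helffer, whose proof is exactly your scaling, partial Fourier transform in $x$, and fibrewise comparison with $D_\xi$. Your remark that on the unbounded half-plane the form domain must be the maximal magnetic domain $\{f\in L^2 : (\nabla - i\lambda A)f\in L^2\}$, rather than $H^1(\R^2_+)$, is a correct refinement of the statement as written.
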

\begin{proof}
The unitary equivalence follows directly by doing a change of variables, and the other properties were shown, for example, in
    \cite[Chapters 3.2 and 4.3]{Fournais_2010}.
\end{proof}

Lastly, we establish a simple lower bound for magnetic Laplacians with Dirichlet boundary conditions.

\begin{lemma}
\label{lem: lower bound dirichlet}
Let $U \subseteq \R^2 $ be an open and bounded domain. Then
\[
E_1(D_\lambda^U) \geq \lambda
\]
\end{lemma}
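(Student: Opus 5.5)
The plan is the standard "magnetic field strength times area" bound, i.e. the lowest Landau level inequality. Since the form domain of $D_\lambda^U$ is $H^1_0(U)$ and $C^\infty_c(U)$ is dense in it with respect to the form norm, it suffices to show
\[
d_\lambda^U(f) \geq \lambda \|f\|^2_{L^2(U)} \quad \text{for all } f \in C^\infty_c(U).
\]
The conclusion then follows from the Min-Max principle, $E_1(D_\lambda^U)=\Lambda_1(D_\lambda^U)\geq \lambda$. If the bottom of the spectrum were not an eigenvalue, the same bound would hold for the Rayleigh quotient; here $U$ is bounded, so the resolvent is compact and the eigenvalue exists.

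To prove the inequality, I introduce the magnetic derivatives $P_1=\partial_x$ and $P_2=\partial_y - i\lambda x$. These satisfy the commutator identity $[P_1,P_2]=-i\lambda$, so that $\operatorname{curl} A = 1$ with $A=(0,x)$. For $f\in C^\infty_c(U)$, extended by zero to $\R^2$, I expand
\[
\|(P_1 + iP_2)f\|^2 = \|P_1 f\|^2 + \|P_2 f\|^2 + 2\,\mathrm{Re}\,\langle P_1 f, iP_2 f\rangle .
\]
Integrating by parts, which is legitimate because $f$ has compact support, gives $P_1^*=-P_1$ and $P_2^*=-P_2$. The cross term therefore equals $i\langle f,[P_1,P_2]f\rangle$ up to a sign, which is $\pm\lambda\|f\|^2$. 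Choosing the sign combination $P_1\pm iP_2$ for which the cross term equals $-\lambda\|f\|^2$ yields
\[
0 \le \|(P_1\pm iP_2)f\|^2 = d_\lambda^U(f) - \lambda\|f\|^2 .
\]
This is exactly the required bound.

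The only step needing care is the sign bookkeeping in the commutator computation. Since I may use either sign choice, both orientations are available, and the step is routine. Boundedness of $U$ and the Dirichlet condition enter only through the absence of boundary terms in the integration by parts and through the existence of a discrete first eigenvalue.
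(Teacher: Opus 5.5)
Your proof is correct and is essentially the paper's argument. Extending $f\in C^\infty_c(U)$ by zero to the plane is exactly the Dirichlet monotonicity the paper invokes, and your identity $\|(P_1+iP_2)f\|^2 = d_\lambda^U(f)-\lambda\|f\|^2$ (the plus sign is the right choice, since $[P_1,P_2]=-i\lambda$) is the standard proof of the whole-plane bound that the paper simply cites from Fournais--Helffer, so you have given a self-contained version of the same two-step reasoning.
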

\begin{proof}
    It is a well-known fact (see, for example, \cite[Chapter 4.2]{Fournais_2010}) that the magnetic Laplacian on $\R^2$ is bounded from below by $\lambda$. The lower bound for $D_\lambda^U$ follows through Dirichlet-monotony. 
\end{proof}

\section{Spectral asymptotics on the peak}
\label{sec: peak}

In this section, we analyse the spectral properties of the magnetic Laplacian on sections of a peak with mixed boundary conditions. The concrete geometric setting is established in the following definition.

\begin{definition}
    Let $q>1$, $a_-,a_+ \in \R$ with $a_-<a_+$ and $I\subseteq (0,\infty)$ an open interval. Set $a_{max} := \max\{ |a_+|,|a_-| \}$, then the set
    \[
    V_I := \{ (x,y) \in \R^2 \mid x \in I, a_-x^q <y<a_+x^q \}
    \]
    is called a section $I$ of an outward peak of sharpness order $q$.
    We also set the rectangle
    \[
    \Pi_I := I \times (a_-,a_+).
    \]
\end{definition}

The operator whose spectral properties are of interest is a magnetic Laplacian on $V_I$ with Dirichlet boundary conditions imposed on the straight parts of $\partial V_I$ and Neumann boundary conditions at the rest of the boundary:

\begin{definition}
    Let $U \subseteq \R^2$ be an open set and $I \subseteq \R$ an open
    interval and set
    \[
        H^1_I(U) := \{ f \in H^1(U) \mid \exists \, c_0, c_1 \in I, \, c_0 < c_1 : f(x,y) = 0 , \,\forall \, (x,y) \in U  \text{ with } x \notin [c_0,c_1] \}
    \]
    We define the quadratic form of the magnetic Laplacian on $V_I$ with mixed Dirichlet and Neumann boundary conditions as
    \[
        q_\lambda^{I} (f) = n_\lambda^{ V_I } ( f ), \quad D ( q^{I}_\lambda ) =  \text{the closure of } H^1_I ( V_I ) 
        \text{ in } H^1(V_I).
    \]
    As usual, $Q_\lambda^I$ refers to the self-adjoint operator in $L^2$ generated by $q_\lambda^I$.
\end{definition}

The general goal of this section is to show that the magnetic Laplacian on the peak $V_{(0,b)}$ for some fixed $b>0$ induces eigenvalues of order $ \lambda^{\frac{2}{q+1}}$. This is achieved by splitting up the peak into three domains via an IMS partition and analysing the eigenvalue asymptotics of the respective magnetic Laplacians separately. To be more precise, we define for some $\kappa > 1/(2q)$ and $b,b_1,b_2,b_3 >0$ the intervals
\[
I_1 := (0,\lambda^{-\kappa}), \quad  I_2 := (b_1 \lambda^{-\kappa}, b_2 \lambda^{-\frac{1}{2q}}), \quad I_3:= (b_3 \lambda^{-\frac{1}{2q}},b).
\]
Then the analysis of the spectral properties of $Q_\lambda^{I_j}$, $j=1,2,3$ yields that $E_n(Q_\lambda^{I_1}) \sim \lambda^{\frac{2}{q+1}}$ and that $E_1(Q_\lambda^{I_2})$ and $ E_1(Q_\lambda^{I_3}) $ diverge to $+\infty$ faster than $\lambda^{\frac{2}{q+1}}$ as $\lambda \to \infty$. This shows that the singularity at the peak induces eigenvalues of order $\lambda^{\frac{2}{q+1}}$ and that the rest of the peak has no significant impact on the eigenvalue asymptotics.

\begin{figure}[t]
    \centering
\scalebox{0.75}{
\begin{tikzpicture}[x=1pt,y=1pt,yscale=-1,xscale=1]

\draw    (429.6,57.6) -- (430,210) ;
\draw  [draw opacity=0][fill={rgb, 255:red, 155; green, 155; blue, 155 }  ,fill opacity=0.41 ] (429.6,57.6) -- (430,210) -- (204.67,159.47) -- cycle ;
\draw [fill={rgb, 255:red, 255; green, 255; blue, 255 }  ,fill opacity=1 ]   (200,160) .. controls (284,150.4) and (386.8,85.6) .. (429.6,57.6) ;
\draw [fill={rgb, 255:red, 255; green, 255; blue, 255 }  ,fill opacity=1 ]   (200.4,160) .. controls (315.2,167.6) and (392.8,202.8) .. (430,210) ;
\draw    (170,160) -- (478,160) ;
\draw [shift={(480,160)}, rotate = 180] [fill={rgb, 255:red, 0; green, 0; blue, 0 }  ][line width=0.08]  [draw opacity=0] (8.4,-2.1) -- (0,0) -- (8.4,2.1) -- cycle    ;
\draw    (200,220) -- (200,52) ;
\draw [shift={(200,50)}, rotate = 90] [fill={rgb, 255:red, 0; green, 0; blue, 0 }  ][line width=0.08]  [draw opacity=0] (8.4,-2.1) -- (0,0) -- (8.4,2.1) -- cycle    ;
\draw    (39.19,356.75) -- (39.19,286.96) ;
\draw [shift={(39.19,284.96)}, rotate = 90] [fill={rgb, 255:red, 0; green, 0; blue, 0 }  ][line width=0.08]  [draw opacity=0] (8.4,-2.1) -- (0,0) -- (8.4,2.1) -- cycle    ;
\draw    (24,330.64) -- (196,331.06) ;
\draw [shift={(198,331.07)}, rotate = 180.14] [fill={rgb, 255:red, 0; green, 0; blue, 0 }  ][line width=0.08]  [draw opacity=0] (8.4,-2.1) -- (0,0) -- (8.4,2.1) -- cycle    ;
\draw    (170.64,287.48) -- (170.64,352.57) ;
\draw  [draw opacity=0][fill={rgb, 255:red, 155; green, 155; blue, 155 }  ,fill opacity=0.5 ] (170.64,287.48) -- (170.64,352.57) -- (43.87,330.67) -- cycle ;
\draw [fill={rgb, 255:red, 255; green, 255; blue, 255 }  ,fill opacity=1 ]   (39.7,330.9) .. controls (88.61,326.99) and (137.83,302.62) .. (170.64,287.48) ;
\draw [fill={rgb, 255:red, 255; green, 255; blue, 255 }  ,fill opacity=1 ]   (39.7,330.9) .. controls (102.89,334.82) and (141.17,346.05) .. (170.64,352.57) ;
\draw    (231.22,365.08) -- (231.22,267) ;
\draw [shift={(231.22,265)}, rotate = 90] [fill={rgb, 255:red, 0; green, 0; blue, 0 }  ][line width=0.08]  [draw opacity=0] (8.4,-2.1) -- (0,0) -- (8.4,2.1) -- cycle    ;
\draw    (224.01,329.34) -- (399,330.57) ;
\draw [shift={(401,330.58)}, rotate = 180.4] [fill={rgb, 255:red, 0; green, 0; blue, 0 }  ][line width=0.08]  [draw opacity=0] (8.4,-2.1) -- (0,0) -- (8.4,2.1) -- cycle    ;
\draw    (274.18,318.76) -- (274.18,335.06) ;
\draw    (373.07,272.43) -- (372.78,357.93) ;
\draw  [draw opacity=0][fill={rgb, 255:red, 155; green, 155; blue, 155 }  ,fill opacity=0.5 ] (274.18,318.76) -- (329.3,292.94) -- (373.07,272.43) -- (372.78,357.93) -- (274.18,335.06) -- cycle ;
\draw [fill={rgb, 255:red, 255; green, 255; blue, 255 }  ,fill opacity=1 ]   (274.18,335.06) .. controls (316.27,342.49) and (343.66,350.79) .. (372.78,357.93) ;
\draw [fill={rgb, 255:red, 255; green, 255; blue, 255 }  ,fill opacity=1 ]   (274.18,318.76) .. controls (314.54,304.75) and (341.07,290.16) .. (373.07,272.43) ;
\draw    (459.36,367.75) -- (459.66,259) ;
\draw [shift={(459.67,257)}, rotate = 90.16] [fill={rgb, 255:red, 0; green, 0; blue, 0 }  ][line width=0.08]  [draw opacity=0] (8.4,-2.1) -- (0,0) -- (8.4,2.1) -- cycle    ;
\draw    (448,330.74) -- (642.5,330.97) ;
\draw [shift={(644.5,330.97)}, rotate = 180.07] [fill={rgb, 255:red, 0; green, 0; blue, 0 }  ][line width=0.08]  [draw opacity=0] (8.4,-2.1) -- (0,0) -- (8.4,2.1) -- cycle    ;
\draw    (543.49,306.63) -- (544.1,342.8) ;
\draw    (627.92,262.05) -- (627.92,364.95) ;
\draw  [draw opacity=0][fill={rgb, 255:red, 155; green, 155; blue, 155 }  ,fill opacity=0.5 ] (543.49,306.63) -- (627.92,262.05) -- (627.92,364.95) -- (544.1,342.8) -- cycle ;
\draw [fill={rgb, 255:red, 255; green, 255; blue, 255 }  ,fill opacity=1 ]   (543.49,306.63) .. controls (569.89,294.85) and (603.05,277.47) .. (627.92,262.05) ;
\draw [fill={rgb, 255:red, 255; green, 255; blue, 255 }  ,fill opacity=1 ]   (544.1,342.8) .. controls (578.8,350.37) and (607.04,358.78) .. (627.92,364.95) ;
\draw    (162.33,189) .. controls (122.93,206.73) and (86.44,232.22) .. (70.23,272.15) ;
\draw [shift={(69.5,273.98)}, rotate = 291.12] [color={rgb, 255:red, 0; green, 0; blue, 0 }  ][line width=0.75]    (10.93,-3.29) .. controls (6.95,-1.4) and (3.31,-0.3) .. (0,0) .. controls (3.31,0.3) and (6.95,1.4) .. (10.93,3.29)   ;
\draw    (504.33,184) .. controls (532.9,196.81) and (555.64,220.28) .. (572.56,256.34) ;
\draw [shift={(573.33,258)}, rotate = 245.32] [color={rgb, 255:red, 0; green, 0; blue, 0 }  ][line width=0.75]    (10.93,-3.29) .. controls (6.95,-1.4) and (3.31,-0.3) .. (0,0) .. controls (3.31,0.3) and (6.95,1.4) .. (10.93,3.29)   ;
\draw    (308.33,211.17) -- (307.38,257.17) ;
\draw [shift={(307.33,259.17)}, rotate = 271.19] [color={rgb, 255:red, 0; green, 0; blue, 0 }  ][line width=0.75]    (10.93,-3.29) .. controls (6.95,-1.4) and (3.31,-0.3) .. (0,0) .. controls (3.31,0.3) and (6.95,1.4) .. (10.93,3.29)   ;

\draw (189,162) node [anchor=north west][inner sep=0.75pt]  [font=\small] [align=left] {$\displaystyle 0$};
\draw (351,122) node [anchor=north west][inner sep=0.75pt]   [align=left] {$\displaystyle V_{( 0,b)}$};
\draw (419,162) node [anchor=north west][inner sep=0.75pt]  [font=\small] [align=left] {$\displaystyle b$};
\draw (223.01,332.84) node [anchor=north west][inner sep=0.75pt]  [font=\tiny] [align=left] {$\displaystyle 0$};
\draw (263.79,340.99) node [anchor=north west][inner sep=0.75pt]  [font=\tiny] [align=left] {$\displaystyle b_{1} \lambda ^{-\kappa }$};
\draw (357.87,325.98) node [anchor=north west][inner sep=0.75pt]  [font=\tiny] [align=left] {$\displaystyle b_{2} \lambda ^{-\frac{1}{2q}}$};
\draw (336.07,298.44) node [anchor=north west][inner sep=0.75pt]  [font=\footnotesize] [align=left] {$\displaystyle V_{I}{}_{_{2}}$};
\draw (450,333.74) node [anchor=north west][inner sep=0.75pt]  [font=\tiny] [align=left] {$\displaystyle 0$};
\draw (580.1,298.2) node [anchor=north west][inner sep=0.75pt]  [font=\footnotesize] [align=left] {$\displaystyle V_{I_{3}}$};
\draw (529.28,326.47) node [anchor=north west][inner sep=0.75pt]  [font=\tiny] [align=left] {$\displaystyle b_{3} \lambda ^{-\frac{1}{2q}}$};
\draw (620.65,334.08) node [anchor=north west][inner sep=0.75pt]  [font=\tiny] [align=left] {$\displaystyle b$};
\draw (154.13,334.87) node [anchor=north west][inner sep=0.75pt]  [font=\tiny] [align=left] {$\displaystyle \lambda ^{-\kappa }$};
\draw (30.63,332.04) node [anchor=north west][inner sep=0.75pt]  [font=\tiny] [align=left] {$\displaystyle 0$};
\draw (138.08,305.51) node [anchor=north west][inner sep=0.75pt]  [font=\footnotesize] [align=left] {$\displaystyle V_{I_{1}}$};

\end{tikzpicture}
}
    \caption{Partition of $V_{(0,b)}$}
    \label{fig:cut my life into pieces}
\end{figure}

\subsection{General upper bound for the magnetic Laplacian on a peak}
We will first establish an upper bound for $Q_\lambda^I$ for any bounded, open interval $I$. This is achieved by doing a change of variables that produces a unitary equivalent operator on the rectangle $\Pi_I$. We then project the operator on $\Pi_I$ onto $I$, which gives the desired upper bound. This is also how the one-dimensional model operators $T^I_\lambda$ and $T_\lambda$ that were introduced in Subsection \ref{sec: 1dim model op} come into play. 

Let us start with the change of variables.
\begin{lemma}
\label{lem: coordinate change}
    Let $I \subseteq (0,\infty)$ be an open interval. Then $Q^{I}_\lambda $ is unitary equivalent to the operator generated by 
    \[
         \ell^I_\lambda(v) :=  \int\limits_{\Pi_I} \left|s^{-q}\partial_tv \right|^2 + \left| \partial_s v - \frac{qt}{s} \partial_t v + i \lambda s^q t v  -\frac{q}{2s} v \right|^2 \ddd t \ddd s
    \]
    with its domain $D(\ell^I_\lambda)$ being the closure of $H^1_I(\Pi_I)$ in regards to the norm induced by $\ell^I_\lambda$.
\end{lemma}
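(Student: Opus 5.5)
The plan is to realise the unitary equivalence as the composition of a gauge transformation and a weighted change of variables that straightens the peak $V_I$ into the rectangle $\Pi_I$. I would work throughout with $x=s$, $y=s^q t$, $t\in(a_-,a_+)$, which is a diffeomorphism $\Pi_I\to V_I$ with Jacobian $\mathrm{d}x\,\mathrm{d}y = s^q\,\mathrm{d}s\,\mathrm{d}t$.

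\emph{Step 1 (gauge).} For $f\in H^1(V_I)$ I set $g:=e^{-i\lambda xy}f$, so that $f=e^{i\lambda xy}g$. Multiplication by a unimodular smooth function is unitary on $L^2(V_I)$. It also preserves the condition in the definition of $H^1_I(V_I)$ that the function vanishes outside a strip $c_0\le x\le c_1$. A direct computation gives
\[
\partial_y f - i\lambda x f = e^{i\lambda xy}\partial_y g, \qquad \partial_x f = e^{i\lambda xy}(\partial_x g + i\lambda y g).
\]
Hence $n_\lambda^{V_I}(f)=\int_{V_I}|\partial_y g|^2+|\partial_x g+i\lambda y g|^2\ddd x\ddd y$.

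\emph{Step 2 (straightening).} I define $(Ug)(s,t):=s^{q/2}g(s,s^qt)$. By the Jacobian this is unitary $L^2(V_I)\to L^2(\Pi_I)$. Writing $g(x,y)=x^{-q/2}v(x,yx^{-q})$ with $v=Ug$, the chain rule yields
\[
\partial_y g = x^{-q/2}s^{-q}\partial_t v, \qquad \partial_x g = x^{-q/2}\Bigl(\partial_s v - \tfrac{qt}{s}\partial_t v - \tfrac{q}{2s}v\Bigr).
\]
Moreover $i\lambda y g = x^{-q/2}\,i\lambda s^q t\,v$. Squaring, the factor $x^{-q}=s^{-q}$ cancels against the Jacobian $s^q$. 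This gives exactly $n_\lambda^{V_I}(f)=\ell^I_\lambda(v)$ for $v=U(e^{-i\lambda xy}f)$.

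\emph{Step 3 (domains).} This is where I expect the only real care to be needed, since $I$ may have $0$ as an endpoint, where the weights $s^{\pm q}$ and $1/s$ degenerate. For $f\in H^1_I(V_I)$ the support lies in $c_0\le x\le c_1$ with $[c_0,c_1]\subset I$. There all coefficients of the transformation are smooth and bounded above and below, so $U$ maps $H^1_I(V_I)$ bijectively onto $H^1_I(\Pi_I)$; the inverse map is checked the same way.

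On these cores the composite unitary $W$ intertwines the form norms: $\|Wf\|^2_{L^2}+\ell^I_\lambda(Wf)=\|f\|^2_{L^2}+q^I_\lambda(f)$. Also, on $H^1(V_I)$ the $q_\lambda^I$-form norm is equivalent to the $H^1$-norm, because $\lambda x$ is bounded on the bounded set $V_I$. Hence the closure of $H^1_I(V_I)$ in $H^1(V_I)$ coincides with its closure in the form norm. $W$ therefore extends to an isometric bijection between $D(q^I_\lambda)$ and the $\ell^I_\lambda$-closure of $H^1_I(\Pi_I)$. This shows in passing that $\ell^I_\lambda$ is closed on that domain.

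Since the forms are unitarily equivalent via $W$, so are the generated self-adjoint operators, which proves the lemma.
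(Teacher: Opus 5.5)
Your proposal is correct and uses the same unitary map as the paper, $f\mapsto e^{-i\lambda s^{q+1}t}s^{q/2}f(s,s^qt)$. The only difference is the order of composition: the paper straightens and removes the weight first and applies the gauge $e^{i\lambda s^{q+1}t}$ last, while you gauge by $e^{-i\lambda xy}$ first. Your Step~3 is more careful about the form domains than the paper, which only notes $\Phi(H^1_I(V_I))=H^1_I(\Pi_I)$; the equivalence of the form norm with the $H^1$-norm that you use implicitly requires $I$ to be bounded, which holds in every application in the paper.
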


\begin{proof}
    Consider the map
    \[
        \varphi : \R^2 \rightarrow \R^2, \quad (s,t) \mapsto (s,   s^q t)
    \]
    then one has $ \varphi(\Pi_I) = V_I $ and one can define the coordinate change
    \[
        \Phi_1 : L^2(V_I) \rightarrow L^2(\Pi_I , s^{q}), \quad f \mapsto f \circ \varphi =:u .
    \]
    Since one has 
    \[
        D  \varphi = 
        \begin{pmatrix}
        1 & 0 \\  q s^{q-1}t  &   s^q
        \end{pmatrix}, 
        \quad \det D\varphi = s^q ,
    \]
    it is easy to see that $\Phi_1$ is unitary:
    \[
    \| f\|^2_{L^2(V_I)} = \int\limits_{V_I} |f(x,y)|^2 \ddd  x \ddd y =
     \int\limits_{\Pi_I} | f(s,s^q t) |^2 s^q \ddd s \ddd t = \| \Phi_1 f \|^2_{L^2(\Pi_I, s^q)}.
    \]
    This allows us to do a change of variables $u(s,t) = f(s,ts^q)$ and we get
    \[
         \partial_s u = \partial_x f + qs^{q-1}t \partial_y f, \quad 
        \partial_t u = s^q \partial_y f.
    \]
    Thus,
    \[
        \partial_y f = s^{-q} \partial_t u, \quad 
        \partial_x  f = \partial_s u - qts^{-1} \partial_t u 
    \]
    and we write
    \begin{align*}
        q^{I}_\lambda (f) &= \int\limits_{V_I} | \partial_x f|^2 + | \partial_y f- i \lambda x f|^2 \ddd  x \ddd y  \\
        &= \int\limits_{\Pi_I} \left( \left| s^{-q} \partial_t u - i\lambda s u \right|^2 + \left|\partial_s u - qs^{-1}t \partial_t u\right|^2 \right) s^q \ddd s \ddd t =: \tilde{q}_\lambda(u).
    \end{align*}
    The next step is to eliminate the weight $s^q$ with the help of another map 
    \[
    \Phi_2 :L^2(\Pi_I) \rightarrow L^2(\Pi_I,s^q). \quad \Phi_2(u) = s^{-\frac{q}{2}}u .
    \]
    $\Phi_2$ is unitary:
    \[
    \| \Phi_2 u \|^2_{L^2(\Pi_I,s^q)} 
    =\int\limits_{\Pi_I} \left|s^{-\frac{q}{2}} u\right|^2 s^q\ddd s \ddd t 
    = \int\limits_{\Pi_I} |u|^2 \ddd s\ddd t = \| u \|^2_{L^2(\Pi_I)}.
    \]
    And we have
    \[
     \partial_s (s^{-\frac{q}{2}} u) = s^{-\frac{q}{2}} \partial_s u - \frac{q}{2 } s^{-\frac{q}{2} -1} u = s^{-\frac{q}{2}} (\partial_s u- \frac{q}{2s}u), \quad  
     \partial_t (s^{-\frac{q}{2}} u) = s^{-\frac{q}{2}} (\partial_t u) .
    \]
    Therefore,
    \begin{align*}
        \tilde{q}_\lambda (\Phi_2 u) &= \int\limits_{\Pi_I} s^q \left( \left| s^{-q} \partial_t (s^{-\frac{q}{2}}u) - i\lambda s (s^{-\frac{q}{2}}u) \right|^2 + \left|\partial_s (s^{-\frac{q}{2}}u) - qs^{-1}t \partial_t (s^{-\frac{q}{2}}u)\right|^2 \right) \ddd s \ddd t \\
         &=\int\limits_{\Pi_I} \left| s^{-q} \partial_t u - i\lambda s u \right|^2 + \left|\partial_s u - \frac{q}{2s}u - \frac{qt}{s}\partial_t u \right|^2  \ddd s \ddd t =: q'_\lambda (u).
    \end{align*}
    Lastly, we apply a gauge transformation $\Phi_3$, with $\Phi_3(v)(s,t) = e^{i\lambda s^{q+1}t} v(s,t) = u(s,t)$, and get
    \[
    \partial_s u = e^{i \lambda s^{q+1}t}(i \lambda (q+1)s^q t \,v + \partial_s v), \quad 
    \partial_t u = e^{i \lambda s^{q+1}t} (i \lambda s^{q+1} \,v + \partial_t v).
    \]
    Thus,
    \begin{align*}
        q'_\lambda (u) &= \int\limits_{\Pi_I} \left|s^{-q}\partial_tv + i \lambda sv - i \lambda s v \right|^2 + \left| \partial_s v - \frac{qt}{s} \partial_t v + i \lambda (q+1)s^q t v - i \lambda qs^qtv -\frac{q}{2s} v \right|^2 \ddd t \ddd s \\
        &=  \int\limits_{\Pi_I} \left|s^{-q}\partial_tv \right|^2 + \left| \partial_s v - \frac{qt}{s} \partial_t v + i \lambda s^q t v  -\frac{q}{2s} v \right|^2 \ddd t \ddd s =: \ell^I_\lambda(v).
    \end{align*}
    Due to $|e^{i\lambda s^{q+1} t}| =1$, $\Phi_3$ is unitary and therefore $ \Phi:= \Phi_3^{-1} \circ \Phi_2^{-1} \circ \Phi_1:L^2(V_I) \to L^2(\Pi_I) $ is unitary as well. Note that
    \[
    \Phi(H^1_I(V_I)) 
    = H^1_{I}(\Pi_I).
    \]
    Thus, the operator $L_\lambda^I$ generated by $\ell_\lambda^I$ with domain $D(\ell_\lambda^I)$, the closure of $H^1_I(\Pi_I)$ with respect to the norm induced by $\ell_\lambda^I$, is unitary equivalent to $Q_\lambda^I$.
 \end{proof}

We will now prove an upper bound for the eigenvalues of $Q_\lambda^I$ 
with bounded $I$, which will be related to the one-dimensional model operator $T_\lambda$ introduced in subsection \ref{sec: 1dim model op} by considering $L_\lambda^I$ restricted to the space spanned by functions that only depend on $s \in I$.
\begin{lemma}
\label{lem: upper bound corner}
    Let $I \subseteq  (0,\infty)$ be an open, bounded interval. Consider the quadratic form in $L^2(I)$
    \[
    \tilde{t}^I_\lambda (g) = \int\limits_I \left|g'+ i \lambda s^q \frac{1}{2} (a_++a_-) g \right|^2 + |g|^2\left( \frac{(a_+-a_-)^2}{12} \lambda^2s^{2q}+ \frac{q(q-2)}{4s^2}\right) \ddd s, \quad D(\tilde t_\lambda^I) = H^1 _0(I).
    \]
    Then $T_\lambda^I$ is unitary equivalent to $\tilde T_\lambda^I$ and it holds for any $n \in \N$
    $$ E_n(L^I_\lambda) \leq E_n (\tilde{T}^I_\lambda) = E_n(T_\lambda^I). $$
\end{lemma}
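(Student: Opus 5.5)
The plan is to compare $L^I_\lambda$ with $\tilde T^I_\lambda$ through Lemma \ref{comparing operators}, using an isometry $J$ that lifts a function of $s$ to a function on $\Pi_I$ which is constant in $t$. The unitary equivalence $T_\lambda^I \cong \tilde T^I_\lambda$ is handled separately by a one-dimensional gauge transformation.

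\emph{Unitary equivalence.} Set $\mu := \frac{1}{2}(a_++a_-)$ and consider the multiplication operator
\[
(Ug)(s) = e^{-i\lambda\mu s^{q+1}/(q+1)}g(s).
\]
This map is unitary on $L^2(I)$. It preserves $H^1_0(I)$ because $I$ is bounded and $s^q$ is bounded on it, and also $C_c^\infty(I)$. A direct computation gives
\[
(Ug)' + i\lambda\mu s^q\, Ug = e^{-i\lambda\mu s^{q+1}/(q+1)}g',
\]
so $\tilde t^I_\lambda(Ug) = t^I_\lambda(g)$. By Remark \ref{remark}, $D(t^I_\lambda)=H^1_0(I)=D(\tilde t^I_\lambda)$, so the operators are unitarily equivalent. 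In particular $E_n(\tilde T^I_\lambda)=E_n(T^I_\lambda)$; these are eigenvalues since $T_\lambda^I$ has compact resolvent.

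\emph{Upper bound.} Let $\tilde D := C_c^\infty(I)$, which is dense in $H^1_0(I)$ for the form norm by Remark \ref{remark}. Define
\[
Jg(s,t) := (a_+-a_-)^{-1/2}g(s).
\]
Then $\|Jg\|_{L^2(\Pi_I)} = \|g\|_{L^2(I)}$, and $Jg\in H^1_I(\Pi_I)\subseteq D(\ell^I_\lambda)$ because $g$ has compact support in $I$. Since $\partial_t Jg=0$,
\[
\ell^I_\lambda(Jg) = \frac{1}{a_+-a_-}\int_I\int_{a_-}^{a_+} \Big| g' - \frac{q}{2s}g + i\lambda s^q t g\Big|^2 \ddd t \ddd s.
\]
Write $t=\mu+\tau$ with $\tau\in(-\delta,\delta)$ and $\delta=\frac{1}{2}(a_+-a_-)$. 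Expanding the square and integrating in $t$, the cross term linear in $\tau$ vanishes by symmetry. The average of $\tau^2$ is $\delta^2/3=(a_+-a_-)^2/12$, which yields the term $\frac{(a_+-a_-)^2}{12}\lambda^2 s^{2q}|g|^2$. What remains is
\[
\int_I \Big|g' + i\lambda\mu s^q g - \frac{q}{2s}g\Big|^2 \ddd s.
\]

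\emph{Main step.} The key step, and the one where the Hardy-type term $\frac{q(q-2)}{4s^2}$ must emerge, is to expand this last square with $w := g'+i\lambda\mu s^q g$:
\[
|w|^2 - \frac{q}{s}\operatorname{Re}(\bar g w) + \frac{q^2}{4s^2}|g|^2 .
\]
Because $\mu$ is real, $\operatorname{Re}(\bar g\, i\lambda\mu s^q g)=0$, so $\operatorname{Re}(\bar g w)=\frac12(|g|^2)'$. Integrating by parts, with no boundary terms since $g$ has compact support in $I\subset(0,\infty)$,
\[
-\frac{q}{2}\int_I \frac{(|g|^2)'}{s}\ddd s = -\frac{q}{2}\int_I\frac{|g|^2}{s^2}\ddd s .
\]
Combining with $\frac{q^2}{4s^2}$ gives exactly $\frac{q(q-2)}{4s^2}$, so $\ell^I_\lambda(Jg)=\tilde t^I_\lambda(g)$ for all $g\in\tilde D$. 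Lemma \ref{comparing operators} then gives $\Lambda_n(L^I_\lambda)\le\Lambda_n(\tilde T^I_\lambda)=E_n(\tilde T^I_\lambda)$.

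To replace $\Lambda_n(L^I_\lambda)$ by $E_n(L^I_\lambda)$, note that $L^I_\lambda\cong Q^I_\lambda$ by Lemma \ref{lem: coordinate change}. Either $Q^I_\lambda$ has compact resolvent, since for bounded $I$ the domain sits in $H^1$ of a bounded Lipschitz-away-from-peak domain, or one interprets $E_n$ as the Rayleigh quotient $\Lambda_n$.
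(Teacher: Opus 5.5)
Your proposal is correct and follows the paper's proof essentially step for step. The same gauge transformation $e^{-i\lambda\mu s^{q+1}/(q+1)}$ gives $T^I_\lambda\cong\tilde T^I_\lambda$, and the same isometric lift $g\mapsto(a_+-a_-)^{-1/2}(g\otimes 1)$ is fed into Lemma \ref{comparing operators} via the identity $\ell^I_\lambda(Jg)=\tilde t^I_\lambda(g)$, obtained by averaging in $t$ and integrating the $-\frac{q}{2s}(|g|^2)'$ term by parts. Centering $t$ at $\mu$ is only tidier bookkeeping for the paper's expansion with $\frac{a_+^2+a_+a_-+a_-^2}{3}$ and subsequent completion of the square, and your closing remark on $\Lambda_n$ versus $E_n$ covers a point the paper passes over silently.
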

\begin{proof}

The unitary equivalence of $T_\lambda^I$ and $\tilde T_\lambda^I$ follows directly from the gauge transformation 
\[
\Phi: L^2(I) \to L^2(I), \quad h(s) \mapsto e^{-i\lambda s^{q+1} \frac{a_++a_-}{2(q+1)}} h(s) =: g(s).
\]
It is straightforward to show that this operator is unitary, and it holds $ \tilde t_\lambda^I (g) = t_\lambda^I(h)$ as well as $D(\tilde t_\lambda^I) = D(t_\lambda^I) = H^1_0(I)$.

For the comparison of $L_\lambda^I$ and $\tilde T_\lambda^I$, we want to apply Lemma \ref{comparing operators} so we introduce the operator
\[
J: C^\infty_c(I) \to D(\ell_\lambda^I), \quad g \mapsto (a_+-a_-)^{-\frac{1}{2}}(g  \otimes 1),
\]
that is well-defined due to $J(C_c^\infty(I)) \subset H^1_I(\Pi_I) \subset D(L_\lambda^I)$. One also has for any $ g \in C^\infty_c(I)$:
\[
\|Jg \|^2_{L^2(\Pi_I)} = \int\limits_I \int\limits_{a_-}^{a_+} \frac{1}{a_+-a_-} |g|^2\ddd t\ddd s =\frac{1}{a_+-a_-}\int\limits_I |g|^2 \ddd s \int\limits_{a_-}^{a_+}1\ddd t= \| g\|^2_{L^2(I)}.
\]
The last condition needed for Lemma \ref{comparing operators} holds as well, as one can write for any $ g \in C^\infty_c(I)$:
    \begin{align*}
    \ell^I_\lambda(Jg) =& \frac{1}{a_+-a_-} \int\limits_{\Pi_I} \left| g' + g \left(i \lambda s^q t - \frac{q}{2s}\right) \right|^2  \ddd t \ddd s \\
    =& \frac{1}{a_+-a_-} \int\limits_{I} \int\limits_{a_-}^{a_+} |g'|^2 + |g|^2\left(\lambda^2s^{2q}t^2 + \frac{q^2}{4s^2}\right) + g'\bar g\left(- i \lambda s^q t - \frac{q}{2s}\right) + \bar g'g \left(i \lambda s^q t - \frac{q}{2s}\right) \ddd t \ddd s \\
    =& \int\limits_I |g'|^2 + |g|^2 \left( \frac{a_+^2 + a_-a_+ +a_-^ 2}{3}\lambda^2 s^{2q} + \frac{q^2}{4s^2}\right) \\
    &- \frac{q}{2s} (|g|^2)' + \bar g' \left(gi\lambda s^q \frac{a_++a_-}{2}\right) + g'\overline{\left(gi\lambda s^q \frac{a_++a_-}{2}\right)} \ddd s \\
    =&\int\limits_I \left|g'+ i \lambda s^q \frac{1}{2} (a_++a_-) g \right|^2 + |g|^2\left( \frac{(a_+-a_-)^2}{12} \lambda^2s^{2q}+ \frac{q(q-2)}{4s^2}\right) \ddd s
    = \tilde t^I_\lambda(g). \qedhere
    \end{align*}
    
\end{proof}

By combining this Lemma with Lemma \ref{lem: coordinate change} and Lemma \ref{lem: model operator comparison cropped}, we get a concrete upper bound for $Q_\lambda^I$ for a specific choice of $I$.

\begin{corollary}
\label{cor: upper bound corner}
Let $I = (0,\lambda^{-\kappa}) $ and $n \in \N$ then there exists $C>0$ such that
    \[
    E_n(Q_\lambda^I) \leq \lambda^{\frac{2}{q+1}} \left(E_n(T_1)+ C \lambda^{2(\kappa - \frac{1}{q+1})}\right)
    \]
    holds as $\lambda \to + \infty$.
\end{corollary}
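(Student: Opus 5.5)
The corollary is a direct chaining of three earlier results; no new estimate is needed. First, for $I=(0,\lambda^{-\kappa})$, Lemma \ref{lem: coordinate change} gives that $Q_\lambda^I$ is unitarily equivalent to $L_\lambda^I$. Hence $E_n(Q_\lambda^I)=E_n(L_\lambda^I)$, or at least the corresponding Rayleigh quotients coincide, $\Lambda_n(Q_\lambda^I)=\Lambda_n(L_\lambda^I)$.

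Second, $I$ is an open bounded interval contained in $(0,\infty)$, so Lemma \ref{lem: upper bound corner} applies and yields
\[
E_n(L_\lambda^I)\leq E_n(\tilde T_\lambda^I)=E_n(T_\lambda^I).
\]

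Third, $\kappa\in(0,\frac{1}{q+1})$ is the regime of Lemma \ref{lem: model operator comparison cropped}. Its right-hand inequality in (\ref{eq: inequality 1 dim}) gives a constant $C>0$ with
\[
E_n(T_\lambda^I)\leq \lambda^{\frac{2}{q+1}}\Bigl(E_n(T_1)+C\lambda^{2(\kappa-\frac{1}{q+1})}\Bigr)
\]
as $\lambda\to+\infty$. Concatenating the three steps gives the claim.

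The only point needing care is the meaning of $E_n(Q_\lambda^I)$. Since $V_I$ is bounded, $D(q_\lambda^I)\subseteq H^1(V_I)$ embeds compactly into $L^2$, so $Q_\lambda^I$ has compact resolvent. Its $n$th eigenvalue therefore exists and equals $\Lambda_n(Q_\lambda^I)$. The cusp at the origin makes $V_I$ not Lipschitz, so compactness of this embedding is not immediate. I would handle this by working throughout with the Rayleigh quotients $\Lambda_n$: the inequality $\Lambda_n(L_\lambda^I)\le\Lambda_n(T^I_\lambda)$ is exactly what Lemma \ref{comparing operators} provides inside the proof of Lemma \ref{lem: upper bound corner}. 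I would also note that $\kappa$ must be taken in $(0,\frac1{q+1})$, the range where the $\lambda^{2\kappa}$ error is of lower order than the main term.
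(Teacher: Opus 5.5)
Your proposal is correct and is the paper's argument: the corollary is obtained by chaining three earlier results, with $\kappa\in(0,\frac{1}{q+1})$ implicit. These are the unitary equivalence $Q_\lambda^I\cong L_\lambda^I$ from Lemma \ref{lem: coordinate change}, the bound $E_n(L_\lambda^I)\le E_n(T_\lambda^I)$ from Lemma \ref{lem: upper bound corner}, and the right-hand inequality of Lemma \ref{lem: model operator comparison cropped}. Your remark about the existence of $E_n(Q_\lambda^I)$ goes beyond what the paper spells out; working only with $\Lambda_n$ bounds the Rayleigh quotient, and identifying it with $E_n(Q_\lambda^I)$ still needs the spectrum below that level to be discrete, which the paper tacitly assumes.
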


\subsection{Lower bound for the magnetic Laplacian at the peak}
In this subsection we assume that $I = (0, \lambda^{-\kappa{}})$ holds for some $\kappa \in (0, \frac{1}{q+1}) $. With the help of the one-dimensional model operator $T_\lambda$, we will establish a suitable lower bound for the eigenvalues of $Q_\lambda^I$. We start by comparing $L_\lambda^I$ and $T_\lambda^I$.
\begin{lemma}
\label{lem: lower bound corner}
    Let $n \in \N$. For any $\kappa \in (\frac{2q+1}{(q+1)3q}, \frac{1}{q+1})$ there exists $C>0$ such that
    \[
        E_n(L^I_\lambda) \geq  E_n(T^I_\lambda) (1 - C \lambda^{ \frac{2q+1}{q+1}-3q\kappa} ) 
    \]
    holds as $\lambda  \to +\infty$.
\end{lemma}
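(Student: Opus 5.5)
The plan is to compare the form $\ell^I_\lambda$ on $\Pi_I$ with the one-dimensional form $\tilde t^I_\lambda$, which is unitarily equivalent to $t^I_\lambda$ by Lemma \ref{lem: upper bound corner}. To do this I split every $v\in D(\ell^I_\lambda)$ into its $t$-average and the remainder. Set $a:=a_+-a_-$ and write
\[
v(s,t)=a^{-1/2}g(s)+w(s,t),\qquad g(s):=a^{-1/2}\int_{a_-}^{a_+}v(s,t)\ddd t,\qquad \int_{a_-}^{a_+}w(s,t)\ddd t=0 .
\]
Then $\|v\|^2=\|g\|^2_{L^2(I)}+\|w\|^2$ and $g\in H^1_0(I)$. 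It suffices to do this on the dense set $H^1_I(\Pi_I)$.

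\emph{Step 1: the transversal term controls $w$.} Since $w(s,\cdot)$ has zero mean, the Neumann spectral gap on $(a_-,a_+)$ gives
\[
\int_{\Pi_I}|s^{-q}\partial_t v|^2 = \int_{\Pi_I}|s^{-q}\partial_t w|^2 \geq \frac{\pi^2}{a^2}\lambda^{2q\kappa}\|w\|^2 \qquad\text{on } s<\lambda^{-\kappa}.
\]
I will also keep a fraction of $\|s^{-q}\partial_t w\|^2$ in reserve, to absorb the $\partial_t w$ terms that appear in Step 2. Note that $\kappa>\frac{2q+1}{3q(q+1)}$ implies $q\kappa>\frac1{q+1}$. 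Hence $\lambda^{2q\kappa}\gg\lambda^{2/(q+1)}$, which is the size of $E_n(T^I_\lambda)$ by Lemma \ref{lem: model operator comparison cropped}.

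\emph{Step 2: expand the longitudinal term.} Write $X:=\partial_s v-\frac{qt}{s}\partial_t v+i\lambda s^qtv-\frac{q}{2s}v = X_g+X_w$, where
\[
X_g=a^{-1/2}\Bigl(g'+i\lambda s^qtg-\tfrac{q}{2s}g\Bigr).
\]
The computation in the proof of Lemma \ref{lem: upper bound corner} gives exactly $\int|X_g|^2=\tilde t^I_\lambda(g)$; that computation uses that the mean of $t$ on $(a_-,a_+)$ is $\frac{a_++a_-}{2}$. I then use
\[
|X|^2\geq(1-\varepsilon)|X_g|^2-\varepsilon^{-1}|X_w|^2
\]
with a parameter $\varepsilon=\varepsilon(\lambda)\to0$. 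The term $|X_w|^2$ has no favourable sign, and I control it by the reserve from Step 1. It contains $\partial_s w$, $\frac{qt}{s}\partial_t w$, $\lambda s^qtw$ and $\frac{q}{2s}w$. The last three are bounded pointwise by multiples of $\lambda^{\kappa}\lambda^{q\kappa}|s^{-q}\partial_t w|$, $\lambda^{1-q\kappa}|w|$ and $\lambda^{\kappa}|w|$, respectively. These are all dominated by the gap once $\varepsilon$ is not too small.

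The term $\partial_s w$ is more delicate. I would not separate it crudely. Instead I would expand $|X|^2$ exactly and treat the mixed products $2\operatorname{Re}\langle X_g,X_w\rangle$ directly. The contributions $\langle g',\partial_s w\rangle$ and $\langle \frac{q}{2s} g, \partial_s w\rangle$ vanish after integrating in $t$, because $w$ has zero mean. After integrating by parts in $t$, the contribution $\langle g', \frac{qt}{s}\partial_t w\rangle$ becomes $-\langle g',\frac qs w\rangle$. The only genuinely coupling term is then $2\operatorname{Re}\langle i\lambda s^q(t-\bar t)g,\,X_w\rangle$, where $\bar t=\frac{a_++a_-}2$. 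I bound it by Cauchy--Schwarz with weight $\varepsilon$: one part is $\varepsilon\lambda^2s^{2q}|g|^2\lesssim\varepsilon\,\tilde t^I_\lambda(g)$, and the other part goes into the $w$-gap.

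\emph{Step 3: optimize $\varepsilon$ and conclude.} Collecting the bounds, I aim for
\[
\ell^I_\lambda(v)\geq(1-C\varepsilon)\,\tilde t^I_\lambda(g)+\bigl(c\lambda^{2q\kappa}-C\varepsilon^{-1}\lambda^{\max\{2(1-q\kappa),\,2\kappa\}}\bigr)\|w\|^2 .
\]
I then choose $\varepsilon$ as small as possible while keeping the $w$-coefficient at least $E_n(T^I_\lambda)\sim\lambda^{2/(q+1)}$. Balancing the exponents should give $\varepsilon\sim\lambda^{\frac{2q+1}{q+1}-3q\kappa}$; the lower bound $\kappa>\frac{2q+1}{3q(q+1)}$ in the hypothesis is exactly what makes this $\varepsilon$ tend to $0$. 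The form on the right is then bounded below by $(1-C\varepsilon)$ times the direct sum $\tilde t^I_\lambda\oplus(\text{multiplication by }E_n(T^I_\lambda))$. Min-max (Lemma \ref{comparing operators}) and the unitary equivalence of $\tilde T^I_\lambda$ and $T^I_\lambda$ then yield $E_n(L^I_\lambda)\geq(1-C\varepsilon)E_n(T^I_\lambda)$.

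I expect the main obstacle to be the $\partial_s w$ and $\frac{qt}{s}\partial_t w$ pieces. The transversal gap does not control $\partial_s w$ at all, so the exact cancellations in the mixed terms (zero mean in $t$, integration by parts in $t$ and $s$) have to be verified carefully. If they fail, I would retain $\partial_s w$ inside a $(1-\varepsilon)$-multiple of the full form instead of discarding it. Getting the precise exponent $\frac{2q+1}{q+1}-3q\kappa$ depends on which term dominates this balance.
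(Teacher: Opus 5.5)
Your architecture is sound and differs from the paper's. You split $v=a^{-1/2}g+w$, keep $\|s^{-q}\partial_t w\|^2$ in reserve, aim for a form inequality against $\tilde t^I_\lambda\oplus c\lambda^{2q\kappa}$, and apply min-max. The paper instead projects the span of the first $n$ eigenfunctions, uses $\ell^I_\lambda(\tilde v)\geq 0$, bounds $\|\tilde v\|$ through $E_n(L^I_\lambda)$, and closes with (\ref{eq: abx}). However, the step everything hinges on fails as you describe it.

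The coupling $\langle i\lambda s^q t g,\partial_s w\rangle$ cannot be handled by Cauchy--Schwarz:
\begin{itemize}
\item With the small weight on the $g$-side you are left with $\varepsilon^{-1}\|\partial_s w\|^2$. Neither the transversal gap nor $\|X_w\|^2$ (coefficient $1$) can absorb it.
\item With the weight the other way you are left with $\varepsilon^{-1}\lambda^2\|s^q(t-\bar t)g\|^2$. This is of size $\varepsilon^{-1}\tilde t^I_\lambda(g)$ and destroys the main term.
\item Your fallback does not help, since $\ell^I_\lambda$ contains no large multiple of $\|\partial_s w\|^2$.
\end{itemize}
The missing idea, which the paper uses, is to integrate by parts in $s$. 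There are no boundary terms for $v\in H^1_I(\Pi_I)$:
\[
\langle i\lambda s^q t g,\partial_s w\rangle=-\bigl\langle i\lambda t\,(q s^{q-1}g+s^q g'),w\bigr\rangle .
\]
Then combine three estimates:
\begin{itemize}
\item $\tilde t^I_\lambda(g)\geq\delta\|g'\|^2-C\lambda^{2(1-q\kappa)}\|g\|^2$;
\item $\|g\|^2\lesssim\lambda^{-\frac{2}{q+1}}\tilde t^I_\lambda(g)$, by Lemma \ref{lem: model operator comparison cropped};
\item $\|w\|\lesssim\lambda^{-q\kappa}\|s^{-q}\partial_t w\|$.
\end{itemize}
The term is then bounded by $\lambda^{1-q\kappa}\bigl(\sqrt{\tilde t^I_\lambda(g)}+\lambda^{1-q\kappa}\|g\|\bigr)\|w\|+\lambda^{1-(q-1)\kappa}\|g\|\,\|w\|$, hence by $\lambda^{\beta}\sqrt{\tilde t^I_\lambda(g)}\,\|s^{-q}\partial_t w\|$ with $\beta:=\frac{2q+1}{q+1}-3q\kappa$. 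This is exactly where the exponent comes from.

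Your Step 3 does not produce $\beta$. The coefficient you wrote omits this coupling altogether. Balancing it against $\lambda^{\frac{2}{q+1}}$ would give $\varepsilon\sim\lambda^{2-4q\kappa}$, so the stated exponent is asserted, not derived.

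Two claimed cancellations are also false, though the terms involved are harmless once estimated.
\begin{itemize}
\item Integrating $\frac{qt}{s}\partial_t w$ by parts in $t$ produces boundary terms $a_\pm w(s,a_\pm)$, since $w$ has no Dirichlet condition at $t=a_\pm$.
\item The $t$-independent part $g'+i\lambda s^q\bar t g-\frac{q}{2s}g$ of $a^{1/2}X_g$ does not decouple from $i\lambda s^q t w$, because $\int t\,w\ddd t\neq 0$ in general.
\end{itemize}
Estimate these using $|t|/s\leq a_{max}\lambda^{-(q-1)\kappa}s^{-q}$ against the reserved $\|s^{-q}\partial_t w\|$, and $\lambda s^q|t|\leq a_{max}\lambda^{1-q\kappa}$. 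Each is again bounded by $\lambda^{\beta}\sqrt{\tilde t^I_\lambda(g)}\,\|s^{-q}\partial_t w\|$, since the other exponent that appears, $\frac{q}{q+1}-(2q-1)\kappa$, is smaller than $\beta$.

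Your pointwise bound $\frac{q}{2s}|w|\lesssim\lambda^{\kappa}|w|$ goes the wrong way on $(0,\lambda^{-\kappa})$. In the exact expansion this is harmless: the $\frac1s$ meets $w$ only through $\langle g'-\frac{q}{2s}g,\frac{q}{2s}w\rangle=0$, or multiplied by $s^q$.

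With these repairs, dropping $\|X_w\|^2\geq 0$ gives
\[
\ell^I_\lambda(v)\geq\tilde t^I_\lambda(g)+\|s^{-q}\partial_t w\|^2-C\lambda^{\beta}\sqrt{\tilde t^I_\lambda(g)}\,\|s^{-q}\partial_t w\| .
\]
Your min-max conclusion then goes through, because $E_n(T^I_\lambda)\lesssim\lambda^{\frac{2}{q+1}}\ll\lambda^{2q\kappa}$.

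The repaired form-level route even improves on the paper. Split the last term as $K\lambda^{2\beta}\tilde t^I_\lambda(g)+\frac{C^2}{4K}\|s^{-q}\partial_t w\|^2$ and let the gap absorb the second part. This gives the error $\lambda^{2\beta}$ instead of $\lambda^{\beta}$.
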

\begin{proof}
    Consider the orthogonal projection 
    \begin{equation}
    \label{eq: Pi 0}
    P_0: L^2(\Pi_I) \to  L^2(\Pi_I) ,\quad 
      (P_0v) (s,t) = \frac{1}{a_+-a_-} \int\limits_{a_-}^{a_+} v (s,t) \ddd t.
    \end{equation}
    Note that functions in the image of $P_0$ are independent of $t$. We will therefore often treat a function $P_0 v \in L^2(\Pi_I)$ as 
    as a function in $L^2(I)$ which will be indicated by the given norm.

    Let $\psi_1, \dots, \psi_n$ be the first $n$ orthonormal eigenfunctions of $L^I_\lambda$ and set $V := \spann \{ \psi_1, \dots,\psi_n \}$ as well as $ V_0:= P_0 V \subseteq L^2(I)$. 
    For any function $u \in L^2(\Pi_I)$ we set  $\tilde u := u - P_0u$. Note that due to $P_0$ being an orthogonal projection, one has $ \|u\|^2 = \|P_0u\|^2 + \| \tilde u\|^2 $ in the $L^2(\Pi_I)$ norm.

    First, we want to show that $V_0$ is an $n-$dimensional subspace of $H^1_0(I)$. To achieve this, we start by proving some basic properties and norm estimates that hold for any $u \in L^2(\Pi_I)$ and $v \in  V$.

    For any $g \in P_0(L^2(\Pi_I))$ one can rewrite $\langle g, \tilde u \rangle_{L^2(\Pi_I)}$ the following way:
    \begin{equation*}
    \int\limits_I \int\limits_{a_-}^{a_+} \bar g (s) \left(u(s,t) - \frac{1}{a_+-a_-} \int\limits_{a_-}^{a_+} u(s,\tau) \ddd  \tau \right) = \int\limits_I g(s) \left(\int\limits_{a_-}^{a_+} u(s,t) \ddd t -  \int\limits_{a_-}^{a_+} u(s,\tau ) \ddd \tau\right) \ddd s = 0.
    \end{equation*}
    Analogously, it holds $ \langle g, \partial_s \tilde u \rangle_{L^2(\Pi_I)} = 0$. 
    Let $v \in V$. Due to $ \partial_t \tilde v = \partial_t v$, the spectral theorem for Neumann Laplacian on $(a_-,a_+)$ implies
    \begin{equation}
    \label{eq: lower bound partial t tilde v}
         \|\partial_t \tilde v\|^2_{L^2(\Pi_I)} \geq E_2 (-\Delta^N_{a_-,a_+})  \| \tilde v\|^2_{L^2(\Pi_I)},
     \end{equation}
    where $- \Delta^N_{a_-,a_+}$ is the Neumann Laplacian on the interval $(a_-,a_+)$.
    
    On the other hand, we have
    
    \begin{align*}
    \begin{split}
     \|\partial_t v\|^2_{L^2(\Pi_I)} 
    &= \int\limits_I \int\limits_{a_-}^{a_+} \lambda^{2q\kappa} \lambda^{-2q\kappa} |\partial_t v|^2 \ddd t \ddd s 
    \leq \lambda^{-2q\kappa} \int\limits_I \int\limits_{a_-}^{a_+} |s^{-q } \partial_t v |^2 \ddd t \ddd s \\
    &\leq \lambda^{-2q\kappa} \ell^I_\lambda(v) 
    \leq  \lambda^{-2q\kappa} E_n(L^I_\lambda) \|v\|^2_{L^2(\Pi_I)},
    \end{split}
    \end{align*}
    
    which can be combined with (\ref{eq: lower bound partial t tilde v}) and $\partial_t \tilde v = \partial_tv $ to get for $c_0  =(E_2(-\Delta_{a_-,a_+}))^{-1/2} >0$:
    \begin{equation}
    \label{eq: upper bound tilde v}
        \| \tilde v\|_{L^2(\Pi_I)} 
        \leq \frac{\lambda^{-q\kappa} \sqrt{E_n(L^I_\lambda) } \|v \|_{L^2(\Pi_I)} }{\sqrt{E_2(-\Delta_{a_-,a_+}^N)}}
        = c_0\lambda^{-q\kappa} \sqrt{E_n(L^I_\lambda)} \|v \|_{L^2(\Pi_I)}.
    \end{equation}

    This allows us to find both a lower and an upper bound for $P_0v$:

    \begin{equation}
    \label{eq: lower bound v0}
         \| v\|^2_{L^2(\Pi_I)} \bigl( 1 - c_0^2 \lambda^{-2q\kappa} {E_n(L^I_\lambda)} \bigr) \leq \| P_0 v\|_{L^2(\Pi_I)}^2 \leq \| v\|^2_{L^2(\Pi_I)}.
    \end{equation}
    We use Corollary \ref{cor: upper bound corner} and by ensuring that 
    \[
    \lambda^{-2q\kappa}E_n(L_\lambda^I ) \leq \bigl(E_n(T_1)+1 \bigr)\lambda^{-2q\kappa +\frac{2}{q+1}} = \bigl(E_n(T_1)+1 \bigr) \lambda^{2(\frac{1}{q+1}-q \kappa)} \to 0
    \]
     holds as $\lambda \to +\infty$, it follows that $V_0$ is an $n-$dimensional subspace of $L^2(I)$. This is given as per our assumption we have
     \[
    \kappa > \frac{2q+1}{3q(q+1)} > \frac{1}{q(q+1)}  .
     \]
     
    We still need to show that $ P_0(D(\ell^I_\lambda)) \subseteq H^1_0(I)$ holds. 
    Consider the space of functions 
    \[
    D :=  \{ w \in C^\infty(\Pi_I) \mid \exists \, c_0, c_1 \in I, \, c_0 < c_1 : w(s,t) = 0 , \,\forall \, (s,t) \in \Pi_I  \text{ with } s \notin [c_0,c_1] \}
    \]
    that is dense in $D(\ell_\lambda^I)$ due to being dense in $H^1_I(\Pi_I)$.
    It is apparent that $P_0 w \in C^\infty_c(I)$ holds for any $w \in D$. As $C^\infty_c(I)$ is dense in $H^1_0(I)$ as well, it remains to prove that for any $w \in D$ there exists a constant $C_\lambda>0$ only dependent on $\lambda$ such that
    \[
    \| P_0 w \|^2_{H^1(I)} \leq C_\lambda \| w \|^2_{\ell}:= C_\lambda \left(\|w \|^2_{L^2(\Pi_I)} + \ell_\lambda^I(w) \right).
    \]
    Let $w \in D$. Note that for any fixed $ t\in (a_-,a_+) $ one has $w( \cdot,t) \in C^\infty_c(I)$ and one may apply the one-dimensional Hardy inequality (\ref{eq: Hardy}). 
    By combining this with the inequalities $|x+y|^2 \geq (1-\varepsilon)|x|^2 - |y|^2/\varepsilon$ and $2|xy| \leq \varepsilon|x|^2+|y|^2/ \varepsilon  $ that hold for any $\varepsilon>0$ we get 
    
    \begin{align*}
        \ell_\lambda^I(w) =&  \int\limits_{a_-}^{a_+} \int\limits_I |s^{-q} \partial_t w |^2+ |\partial_s w |^2 + \left|\frac{qt}{s} \partial_t w + i\lambda s^q t w -\frac{q}{2s}w \right|^2 \\ 
        &+ \overline{\partial_s w} \left(\frac{qt}{s} \partial_t w + i\lambda s^q t w -\frac{q}{2s}w \right) + \partial_s w \overline{\left(\frac{qt}{s} \partial_t w + i\lambda s^q t w -\frac{q}{2s}w \right)} \ddd s \ddd t \\
        \geq& \int\limits_{a_-}^{a_+} \int\limits_I |s^{-q} \partial_t w |^2+ |\partial_s w |^2 + |w|^2 \left( (1- \varepsilon)\frac{q^2 }{4s^2}-\frac{q}{2s^2} \right) - \frac{1}{\varepsilon} \left|\frac{qt}{s} \partial_t w + i \lambda s^q t w \right|^2 \\
        &-2 |\partial_sw| \left|\frac{qt}{s} \partial_t w + i \lambda s^q t w \right|\ddd s \ddd t \\
        \geq& \int\limits_I \int\limits_{a_-}^{a_+} |s^{-q} \partial_t w |^2+ |\partial_s w |^2(1-\varepsilon) + |w|^2 \left( (1- \varepsilon)\frac{q^2 }{4s^2}-\frac{q}{2s^2} \right) - \frac{2}{\varepsilon}\left|\frac{qt}{s} \partial_t w + i \lambda s^q t w \right|^2 \ddd t\ddd s \\
        \geq& \int\limits_I \int\limits_{a_-}^{a_+} | s^{-q} \partial_t w |^2 \left( 1- s^{2(q-1)}\frac{4q^2t^2}{\varepsilon} \right)+ \frac{\varepsilon}{2}|\partial_s w |^2 \\
        &+ |w|^2 \frac{(q-1)^2-\varepsilon q^2-2\varepsilon}{4s^2}- |w|^2 4\lambda^2 s^{2q} t^2/\varepsilon \ddd t\ddd s .
    \end{align*}
    By choosing $\varepsilon$ sufficiently small there exists $\lambda_0>0$ such that for all $\lambda \geq \lambda_0$ it holds
    \[
    \ell_\lambda^I(w) \geq \frac{\varepsilon}{2}\| \partial_s w\|^2_{L^2(I)} - \bigl(2\lambda^{2-2q\kappa}a_{max}^2/\varepsilon\bigr)\| w\|^2_{L^2(I)}.
    \]
    Therefore, it is clear that there is a $c_\lambda >0$ such that
    \[
     \|\partial_s P_0w  \|^2_{L^2(I)} \leq  (a_+-a_-)^{-1} \| \partial_s w\|^2_{L^2(\Pi_I)} \leq c_\lambda\ell^I_\lambda(w) + c_\lambda \|w \|^2_{L^2(\Pi_I)} = c_\lambda \| w \|^2_{\ell}
    \]
    holds. Thus, we have $ \|P_0w\|^2_{H^1(I)} \leq ( (a_+-a_-)^{-1}+c_\lambda)\|w\|^2_\ell $ which concludes the proof that $V_0$ is an $n$-dimensional subspace of $H^1(I)$.
    
    The Min-Max principle, the inequality (\ref{eq: lower bound v0}) and the calculations done in Lemma \ref{lem: upper bound corner} then yield
    \begin{align}
    \label{eq: Min Max T I lambda}
    \begin{split}
        E_n(T^I_\lambda) 
        &\leq \sup\limits_{ \underset{g \neq 0}{g \in V_0}} \frac{(a_+-a_-)\tilde t^I_\lambda(g)}{(a_+-a_-)\|g\|^2_{L^2(I)}} 
        = \sup\limits_{ \underset{v \neq 0}{v \in V} } \frac{\ell^I_\lambda(P_0 v)}{\|P_0 v\|^2_{L^2(\Pi_I)}} \\
        &\leq \sup\limits_{ \underset{v\neq 0}{v \in V} } \frac{\ell^I_\lambda(P_0 v)}{\| v\|^2_{L^2(\Pi_I)}(1-c^2_0\lambda^{-2q\kappa}E_n(L^I_\lambda))} .
    \end{split}
    \end{align}
    So our next goal is the approximation of $C_\lambda:= \ell^I_\lambda(P_0 v)$. It holds
    \begin{equation}
    \label{eq: Clambda leq}
        \ell^I_\lambda(v) = \ell^I_\lambda(P_0 v) + \underbrace{\ell^I_\lambda (\tilde v)}_{\geq 0} + 2 \Re(\ell^I_\lambda(P_0 v, \tilde v)) \quad \Rightarrow \quad 
         C_\lambda \leq E_n(L^I_\lambda)\|v\|^2_{L^2(\Pi_I)}  + 2 | \ell_\lambda (P_0 v,\tilde v) |.
    \end{equation}
    Thus, the next step is to estimate $\ell^I_\lambda(P_0 v, \tilde v)$. Due to the orthogonality of $\tilde v$ and $\partial_s \tilde v$ to functions in $P_0(V) \subseteq L^2(\Pi_I)$ and  partial integration, one has
\begin{align*}
     \ell&^I_\lambda (P_0 v, \tilde v) = \int\limits_I \int\limits_{a_-}^{a_+} \left[( \partial_s \overline{P_0 v} - \frac{q}{2s} \overline{P_0 v}) - i\lambda s^q t\overline{P_0 v} \right] \left[(\partial_s \tilde v  - \frac{q}{2s} \tilde v) - \frac{qt}{s} \partial_t \tilde v+ i \lambda s^q t \tilde v\right] \ddd t \ddd s \\
     =& \int\limits_{I} \int\limits_{a_-}^{a_+} \partial_s \tilde v \left(\partial_s \overline{P_0 v} - \frac{q}{2s} \overline{P_0v}\right) + \tilde v \left(\frac{q^2}{4s^2}\overline{P_0 v}-\frac{q}{2s}\partial_s \overline{P_0 v} \right) - i \lambda s^q t \overline{P_0 v} \partial_s \tilde v   +i\lambda s^{q-1} t \frac{q}{2} \overline{P_0 v} \tilde v \\
     &-\frac{qt}{s} \partial_t \tilde v \left(\partial_s \overline{P_0 v} - \frac{q}{2s} \overline{P_0 v} - i\lambda s^q t\overline{P_0 v}\right) + i \lambda s^q t \partial_s \overline{P_0 v} \tilde v -i \lambda s^{q-1}t\frac{q}{2}\overline{P_0 v} \tilde v + \lambda^2 s^{2q}t^2 \overline{P_0 v} \tilde v \, \ddd t \ddd s \\
     =& \int\limits_I \int\limits_{a_-}^{a_+}  -\frac{qt}{s} \partial_t \tilde v \left(\partial_s \overline{P_0 v} - \frac{q}{2s} \overline{P_0 v} - i\lambda s^q t\overline{P_0 v}\right) + i \lambda s^q t \partial_s \overline{P_0 v} \tilde v  - i \lambda s^q t \overline{P_0 v} \partial_s \tilde v  + \lambda^2 s^{2q}t^2 \overline{P_0 v} \tilde v \, \ddd t \ddd s \\
     =&  \int\limits_I \int\limits_{a_-}^{a_+}  -\frac{qt}{s} \partial_t \tilde v \left(\partial_s \overline{P_0 v} - \frac{q}{2s} \overline{P_0 v} - i\lambda s^q t\overline{P_0 v}\right) + 2i \lambda s^q t \partial_s \overline{P_0 v} \tilde v  + i \lambda qs^{q-1} t \overline{P_0 v} \tilde v  + \lambda^2 s^{2q}t^2 \overline{P_0 v} \tilde v \, \ddd t \ddd s \\
     =&\int\limits_I \int\limits_{a_-}^{a_+}  \left( i \lambda s^q t \tilde v -\frac{qt}{s} \partial_t \tilde v\right) \left(\partial_s \overline{P_0 v} - \frac{q}{2s} \overline{P_0 v} - i\lambda s^q t\overline{P_0 v}\right) + i \lambda s^q t \partial_s \overline{P_0 v} \tilde v  + i \frac{3}{2} \lambda qs^{q-1} t \overline{P_0 v} \tilde v   \, \ddd t \ddd s.
\end{align*}
    Using Cauchy-Schwarz, one gets
    \begin{align}
    \label{eq: Cauchy Schwarz}
    \begin{split}
    |\ell_\lambda^I(P_0 v, \tilde v)| 
    &\leq \left(a_{max}\lambda^{1-q\kappa} \|\tilde v\|_{L^2(\Pi_I)} +\left\|\frac{qt}{s} \partial_t \tilde v \right\|_{L^2(\Pi_I)}\right)  \left\|\partial_s P_0 v+ P_0 v(i\lambda s^q t-\frac{q}{2s} ) \right\|_{L^2(\Pi_I)} \\
    &+ a_{max}\lambda^{1-q\kappa} \| \partial_s P_0 v\|_{L^2(\Pi_I)} \|\tilde v\|_{L^2(\Pi_I) } + \frac{3}{2}a_{max} q \lambda^{1-(q-1)\kappa} \|P_0 v\|_{L^2(\Pi_I)} \|\tilde v\|_{L^2(\Pi_I)},
    \end{split}
    \end{align}
    and we can estimate the terms that are left the following way:
    \begin{align}
    \label{eq: qt:s partial t v upper bound}
    \begin{split}
        \left\| \frac{qt}{s} \partial_t \tilde v \right\|^2_{L^2(\Pi_I)} 
        &= \int\limits_I \int\limits_{a_-}^{a_+} \frac{q^2t^2}{s^2} |\partial_t v|^2 \ddd t \ddd s 
        \leq q^2a_{max}^2 \lambda^{-2(q-1)\kappa} \int\limits_I \int\limits_{a_-}^{a_+} s^{-2q} | \partial_t v|^2 \ddd t \ddd s \\
        &\leq q^2a_{max}^2 \lambda^{-2(q-1)\kappa}\ell^I_\lambda(v) 
        \leq q^2a_{max}^2 \lambda^{-2(q-1)\kappa}E_n(L^I_\lambda) \| v \|^2_{L^2(\Pi_I)},
    \end{split}
    \end{align}
    as well as
    \begin{equation}
    \label{eq: Clambda =}
         \left\|\partial_s P_0 v+ (i\lambda s^q t-\frac{q}{2s}) P_0 v \right\|^2_{L^2(\Pi_I)} = \ell^I_\lambda(P_0 v) = C_\lambda.
    \end{equation}
    By employing the inequality $|x+y|^2 \geq (1-\delta)x^2 - y^2/\delta $ for some $\delta \in (0,\min\{1/2,(q-1)^2/2\})$, and using the one-dimensional Hardy inequality (\ref{eq: Hardy}) one has
\begin{align*}
    (a_+-a_-)^{-1}\ell_\lambda^I (P_0 v) &= \tilde t^I_\lambda (P_0v) \geq \int\limits_I |\partial_s P_0v + i \lambda s^q \frac{1}{2} (a_++a_-) P_0 v|^2 +|P_0 v|^2 \frac{q(q-2)}{4s^2} \ddd s \\
     &\geq \int\limits_I \delta |\partial_sP_0 v|^2 - \lambda^2s^{2q} \frac{(a_++a_-)^2}{4\delta} |P_0 v|^2 + (1-2\delta) |\partial_sP_0 v|^2 + |P_0 v|^2 \frac{q(q-2)}{4s^2} \ddd s \\
     &\geq \int\limits_I \delta |\partial_sP_0 v|^2 + |P_0 v|^2 \frac{(q-1)^2 - 2\delta}{4 s^2} \ddd s - \frac{a_{max}^2}{\delta}\lambda^{2(1-q\kappa)} \|P_0  v\|^2_{L^2(I)} \\
     &\geq \delta \|\partial_s P_0v\|^2_{L^2(I)} - \frac{a_{max}^2}{\delta}\lambda^{2(1-q\kappa)}  \|P_0 v\|^2_{L^2(I)}.
\end{align*}
    Thus, there exists $c_1 >0$ such that
    \begin{equation}
    \label{eq: I hate you 1}
      \| \partial_s P_0 v\|_{L^2(\Pi_I)} \leq c_1 \lambda^{1-q\kappa} \|P_0 v\|_{L^2(\Pi_I)} +c_1\sqrt{C_\lambda}.     
    \end{equation}
    One also has, due to Lemma \ref{lem: model operator comparison cropped}:
    \[
    \ell_\lambda^I(P_0 v) = (a_+-a_-)\tilde t_\lambda^I(P_0 v) \geq (a_+-a_-)E_1( T_\lambda^I) \|P_0v\|^2_{L^2(I)} \geq 
    \lambda^{\frac{2}{q+1}}E_1(T_1) \|P_0v \|^2_{L^2}.
    \]
    Therefore
    \begin{equation}
    \label{eq: Pi0 v leq }    
        \| P_0 v \|_{L^2(\Pi_I)} \leq \lambda^{-\frac{1}{q+1}} (E_1(T_1))^{-\frac{1}{2}}\sqrt{C_\lambda}.
    \end{equation}
    By substituting (\ref{eq: upper bound tilde v}), (\ref{eq: qt:s partial t v upper bound}) ,(\ref{eq: Clambda =}), (\ref{eq: I hate you 1}), and (\ref{eq: Pi0 v leq }) into (\ref{eq: Cauchy Schwarz}), we get for some $c_2 >0$ and sufficiently large $\lambda$:
    \begin{align*}
    | \ell_\lambda^I(P_0v,\tilde v) | &\leq 
    \sqrt{C_\lambda} \left((a_{max} c_0 + a_{max} c_0c_1)\lambda^{1-2q\kappa}   
    + q^2a_{max}^2\lambda^{-2(q-1)\kappa} \right) \sqrt{E_n(L_\lambda^I)} \| v\|_{L^2(\Pi_I)}  \\
    &\left(+ a_{max} c_0 c_1 \lambda^{2-3q\kappa} 
    + \frac{3}{2}a_{max} q c_0 \lambda^{1-(2q-1)\kappa} \right) \sqrt{E_n(L_\lambda^I)} \| v \|_{L^2(\Pi_I)}  \| P_0 v\|_{L^2(\Pi_I)} \\
    &\leq \sqrt{C_\lambda} \left( c_2 \lambda^{1-2q\kappa} + c_2\lambda^{-2(q-1)\kappa} + c_2 \lambda^{\frac{2q+1}{q+1}-3q\kappa} + c_2 \lambda^{\frac{q}{q+1}-(2q-1)\kappa} \right) \sqrt{E_n(L_\lambda^I)} \|v \|^2_{L^2(\Pi_I)}
    \end{align*}
    Since we have 
    \begin{itemize}
        \item $1-2q\kappa > -(2q-1)\kappa$ $\Leftrightarrow 1>2\kappa$ due to $\kappa < \frac{1}{q+1}<\frac{1}{2}$
        \item $ \frac{2q+1}{q+1} - 3q\kappa >1-2q\kappa $ which is equivalent to $ \kappa < \frac{1}{q+1}$
        \item $\frac{2q+1}{q+1}-3q\kappa > \frac{q}{q+1}-(2q-1)\kappa$ which is also equivalent to $ \kappa < \frac{1}{q+1} $
    \end{itemize}
    we get in combination with (\ref{eq: Clambda leq}) for sufficiently large $\lambda$:
    \[
    C_\lambda \leq E_n(L_\lambda^I) +  2c_2\lambda^{\frac{2q+1}{q+1}-3q\kappa} \sqrt{E_n(L_\lambda^I)} \|v \|_{L^2(\Pi_I)} \sqrt{C_\lambda} .
    \]
    Using the fact that for any $a,b,x \geq 0$, it holds
    \begin{equation}
    \label{eq: abx}
    x^2 \leq a + bx \quad \Rightarrow \quad  x \leq  \sqrt{a+ \frac{b^2}{4}} + \frac{b}{2} \quad \Rightarrow \quad 
    x^2 \leq a+b^2+\sqrt{a}b,
    \end{equation}
    we can write (with $x^2 = C_\lambda$)
    \begin{align*}
    C_\lambda &\leq E_n(L^I_\lambda) \| v\|^2_{L^2(\Pi_I)} \left(1+2c_2\lambda^{\frac{2q+1}{q+1}-3q\kappa} + 4c_2^2 \lambda^{2(\frac{2q+1}{q+1}-3q\kappa)} \right) .
    \end{align*}
    Due to  our choice of $\kappa > \frac{2q+1}{3q(q+1)}$ the terms containing powers of $\lambda$ tend to $0$ as $\lambda \to +\infty$ and, using Corollary \ref{cor: upper bound corner} it holds
    \[
    C_\lambda \leq E_n(L_\lambda^I) \|v\|^2_{L^2(\Pi_I)} + E_n(T_\lambda^I) \|v\|^2_{L^2(\Pi_I)}4c_2\lambda^{ \frac{2q+1}{q+1}-3q\kappa}.
    \]
     By combining this with (\ref{eq: Min Max T I lambda}), we arrive at
    \[
    E_n(T_\lambda^I) \leq \sup\limits_{\underset{v\neq0}{v \in V}} \frac{ E_n(L_\lambda^I) \|v\|^2_{L^2(\Pi_I)} + E_n(T_\lambda^I) \|v\|^2_{L^2(\Pi_I)}4c_2\lambda^{ \frac{2q+1}{q+1}-3q\kappa}}{\left(1-c_0^2\lambda^{2(\frac{1}{q+1}-q\kappa)}\right)\|v\|^2_{L^2(\Pi_I)}}
    = \frac{E_n(L_\lambda^I) + E_n(T_\lambda^I)4c_2\lambda^{ \frac{2q+1}{q+1}-3q\kappa}}{\left(1-c_0^2 \lambda^{2(\frac{1}{q+1}-q\kappa)}\right)}.
    \]
    Noting that one has
    \[
    2(\frac{1}{q+1}-q\kappa )< \frac{2q+1}{q+1} - 3q\kappa \Leftrightarrow q\kappa < \frac{2q-1}{q+1} \Leftrightarrow \kappa < \frac{1}{q+1} \frac{2q-1}{q},
    \]
    we conclude with
    \[
    E_n(T_\lambda^I)\left(1-c_0^2\lambda^{2(\frac{1}{q+1}-q\kappa)} - 4c_2\lambda^{ \frac{2q+1}{q+1}-3q\kappa}\right) \leq E_n(L_\lambda^I). \qedhere
    \]

\end{proof}

By combining this Lemma, Lemma \ref{lem: model operator comparison cropped}, and Lemma \ref{lem: coordinate change} we get a lower bound for the eigenvalues of $Q_\lambda^I$:

\begin{corollary}
\label{cor: lower bound corner}
Let $n \in \N$ and $I= (0,\lambda^{-\kappa})$. Then for any $\kappa \in (\frac{2q+1}{(q+1)3q}, \frac{1}{q+1}) $ there exists $C>0$ such that
\[
E_n(Q_\lambda^I) \geq \lambda^{\frac{2}{q+1}}E_n(T_1)-C\lambda^{\frac{2q+3}{q+1} -3q\kappa}
\]
holds as $\lambda \to +\infty$.
\end{corollary}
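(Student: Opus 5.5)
The plan is to chain three earlier results. Lemma \ref{lem: coordinate change} shows that $Q_\lambda^I$ is unitarily equivalent to $L_\lambda^I$, so $E_n(Q_\lambda^I)=E_n(L_\lambda^I)$. Lemma \ref{lem: lower bound corner}, valid for $\kappa$ in the stated range, then gives
\[
E_n(Q_\lambda^I)\geq E_n(T_\lambda^I)\bigl(1-C'\lambda^{\frac{2q+1}{q+1}-3q\kappa}\bigr)
\]
for some $C'>0$ and all sufficiently large $\lambda$. The exponent $\frac{2q+1}{q+1}-3q\kappa$ is negative because $\kappa>\frac{2q+1}{3q(q+1)}$, so for large $\lambda$ the bracket lies in $(0,1)$. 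Hence we may bound $E_n(T_\lambda^I)$ from below inside the product and from above inside the error term without reversing any inequality.

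For the main term we use the lower half of Lemma \ref{lem: model operator comparison cropped}, namely $E_n(T_\lambda^I)\geq \lambda^{\frac{2}{q+1}}E_n(T_1)$, which gives the leading contribution $\lambda^{\frac{2}{q+1}}E_n(T_1)$. For the error term we use the upper half of the same lemma. It gives $E_n(T_\lambda^I)\leq \lambda^{\frac{2}{q+1}}E_n(T_1)+C\lambda^{2\kappa}$, and since $\kappa<\frac{1}{q+1}$ this is at most $\lambda^{\frac{2}{q+1}}(E_n(T_1)+C)$ for large $\lambda$. The subtracted term is therefore at most
\[
C'\bigl(E_n(T_1)+C\bigr)\lambda^{\frac{2}{q+1}+\frac{2q+1}{q+1}-3q\kappa}=C''\lambda^{\frac{2q+3}{q+1}-3q\kappa}.
\]

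Putting these together,
\[
E_n(Q_\lambda^I)\geq \lambda^{\frac{2}{q+1}}E_n(T_1)-C''\lambda^{\frac{2q+3}{q+1}-3q\kappa},
\]
which is the claimed estimate with $C=C''$. The only step needing care is the bookkeeping: we first make sure the bracket from Lemma \ref{lem: lower bound corner} is positive, so that substituting a lower bound for $E_n(T_\lambda^I)$ in the product is legitimate. Beyond that no genuine obstacle arises; the analytic difficulty was already handled in Lemma \ref{lem: lower bound corner}.
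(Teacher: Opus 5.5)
Your proposal is correct and is exactly the paper's argument: the paper obtains the corollary by combining the unitary equivalence $Q_\lambda^I\cong L_\lambda^I$ of Lemma \ref{lem: coordinate change}, the multiplicative lower bound of Lemma \ref{lem: lower bound corner}, and Lemma \ref{lem: model operator comparison cropped}. As you do, it uses the lower half of that lemma for the main term and the upper half (with $\lambda^{2\kappa}\leq\lambda^{\frac{2}{q+1}}$) for the error term; your exponent bookkeeping $\frac{2}{q+1}+\frac{2q+1}{q+1}-3q\kappa=\frac{2q+3}{q+1}-3q\kappa$ just spells out what the paper leaves implicit.
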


\subsection{Lower bound for the magnetic Laplacian near the peak}
    In this subsection, we set $I:= (b_0 \lambda^{-\kappa}, b_1\lambda^{- \frac{1}{2q}})$ for some $b_0, b_1 >0$ and $\kappa \in (\frac{1}{2q}, \frac{1}{q+1})$. The goal is to construct a lower bound for $Q_\lambda^I$, using the same technique that was used in the previous section.

\begin{lemma}
\label{lem: lower bound middle}
    For any $b_0, b_1 >0$ and $\kappa \in (\frac{1}{2q},\frac{1}{q+1})$ there exists $C >0$ such that
    \[
    L_\lambda^I \geq C\lambda^{2(1-q \kappa)}
    \]
    holds for sufficiently large $\lambda$ as $\lambda \to +\infty$.
\end{lemma}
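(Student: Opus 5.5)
The plan is to reuse the projection technique from the proof of Lemma \ref{lem: lower bound corner}, this time aiming only at the order of magnitude $\lambda^{2(1-q\kappa)}$. We split $v\in D(\ell_\lambda^I)$ as $v=P_0v+\tilde v$, with $P_0$ the $t$-average from (\ref{eq: Pi 0}). Two elementary facts about the scales drive everything. First, $s\le b_1\lambda^{-\frac1{2q}}$ on $I$, so $s^{-2q}\ge b_1^{-2q}\lambda$. Second, $\kappa>\frac1{2q}$ gives $2(1-q\kappa)<1$, so the target $\lambda^{2(1-q\kappa)}$ is negligible compared with $\lambda$. It suffices to prove $\ell_\lambda^I(v)\ge C\lambda^{2(1-q\kappa)}\|v\|^2$ for $v$ in the dense core $D$ from the proof of Lemma \ref{lem: lower bound corner}. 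We fix a large constant $M$ and treat only those $v$ with $\ell_\lambda^I(v)\le M\lambda^{2(1-q\kappa)}\|v\|^2$, since otherwise there is nothing to prove.

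\emph{Step 1 (the transverse part is small).} Exactly as in (\ref{eq: lower bound partial t tilde v})--(\ref{eq: upper bound tilde v}), using $\partial_t\tilde v=\partial_t v$, the Neumann gap on $(a_-,a_+)$ and $s^{-2q}\ge b_1^{-2q}\lambda$, we get
\[
\|\tilde v\|^2\le c\,\lambda^{-1}\ell_\lambda^I(v)\le cM\lambda^{1-2q\kappa}\|v\|^2 .
\]
The exponent $1-2q\kappa$ is negative, so $\|P_0v\|^2\ge\frac12\|v\|^2$ for large $\lambda$. In the same way $\|\frac{qt}{s}\partial_t\tilde v\|^2\le c\,s_{\max}^{2(q-1)}\ell_\lambda^I(v)$, which is also small relative to $\ell_\lambda^I(v)$.

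\emph{Step 2 (the one-dimensional part).} By the computation in Lemma \ref{lem: upper bound corner},
\[
\ell_\lambda^I(P_0v)=(a_+-a_-)\,\tilde t^I_\lambda(P_0v)=(a_+-a_-)\,t_\lambda^I(h)
\]
for the gauge-transformed $h$. The estimate (\ref{eq: lower bound tlambdaI}) from Lemma \ref{lem: model operator middle piece} works for any interval whose left end is $b_0\lambda^{-\kappa}$; only $s\ge b_0\lambda^{-\kappa}$ is used. It therefore gives $\ell_\lambda^I(P_0v)\ge c\lambda^{2(1-q\kappa)}\|P_0v\|^2$.

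\emph{Step 3 (the cross term).} This is the main obstacle. We expand $\ell_\lambda^I(v)=\ell_\lambda^I(P_0v)+\ell_\lambda^I(\tilde v)+2\Re\,\ell_\lambda^I(P_0v,\tilde v)$, drop $\ell_\lambda^I(\tilde v)\ge0$, and estimate $|\ell_\lambda^I(P_0v,\tilde v)|$ by the same integration by parts and Cauchy--Schwarz as in (\ref{eq: Cauchy Schwarz}). The prefactors are now governed by $\lambda s^q|t|\le a_{\max}b_1^q\lambda^{1/2}$ and $s^{-1}\le b_0^{-1}\lambda^{\kappa}$. The bound (\ref{eq: I hate you 1}) remains available in the form $\|\partial_sP_0v\|\le c\lambda^{1-q\kappa}\|P_0v\|+c\sqrt{\ell_\lambda^I(P_0v)}$. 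Combining these with Step 1, each cross term should take the form (small factor)$\cdot\bigl(\ell_\lambda^I(P_0v)+\lambda^{2(1-q\kappa)}\|v\|^2\bigr)$.

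The worst term is $\lambda s^q t\,\partial_s\overline{P_0v}\,\tilde v$. Its coefficient is $\lambda^{1/2}\cdot\lambda^{1-q\kappa}\cdot\lambda^{\frac12-q\kappa}=\lambda^{2(1-q\kappa)}$, up to the factor $\sqrt M$ coming from Step 1. This is of exactly the critical order rather than smaller. If it does not close, the first fix to try is sharpening Step 1 by not discarding $\ell_\lambda^I(\tilde v)$. The transverse term $\int|s^{-q}\partial_t\tilde v|^2\ge c\lambda\|\tilde v\|^2$ can absorb, via $2|xy|\le\varepsilon x^2+y^2/\varepsilon$, any contribution of the form $\lambda\|\tilde v\|\cdot\|\,\cdot\,\|$. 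A second option is to shrink $b_1$, which makes the factor $a_{\max}b_1^q$ small. With the cross term absorbed, Steps 1 and 2 give $\ell_\lambda^I(v)\ge c'\lambda^{2(1-q\kappa)}\|P_0v\|^2\ge\frac{c'}2\lambda^{2(1-q\kappa)}\|v\|^2$. The Min-Max principle and density of $D$ then yield $L_\lambda^I\ge C\lambda^{2(1-q\kappa)}$.
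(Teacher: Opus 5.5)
Your Steps 1 and 2 are sound. Step 3 has a genuine gap, and it sits where you suspected, but it is worse than ``critical''. The bound $\|\partial_sP_0v\|\le c\lambda^{1-q\kappa}\|P_0v\|+c\sqrt{\ell_\lambda^I(P_0v)}$ does not follow from (\ref{eq: I hate you 1}) on this interval. That derivation drops the potential and bounds $\frac{(a_++a_-)^2}{4\delta}\lambda^2s^{2q}$ by its supremum over $I$. Here that supremum is $\sim b_1^{2q}\lambda$, not $\lambda^{2(1-q\kappa)}$; the latter is only the value at the left endpoint $b_0\lambda^{-\kappa}$. So the argument honestly gives only $\|\partial_sP_0v\|\lesssim\lambda^{1/2}\|P_0v\|+\sqrt{\ell_\lambda^I(P_0v)}$. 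With that, your worst term is of order $\lambda^{\frac32-q\kappa}\|v\|^2\gg\lambda^{2(1-q\kappa)}\|v\|^2$, because $\kappa>\frac{1}{2q}$, and neither of your rescues touches this. The missing ingredient is the paper's sharper estimate (\ref{eq: I hate you 2}). Expand $|g'+i\lambda s^q\frac{a_++a_-}{2}g|^2\ge(1-\delta^2)|g'|^2+(1-\delta^{-2})\frac{(a_++a_-)^2}{4}\lambda^2s^{2q}|g|^2$ with $\delta<1$ so close to $1$ that the negative part is absorbed by the potential $\frac{(a_+-a_-)^2}{12}\lambda^2s^{2q}|g|^2$ of $\tilde t^I_\lambda$. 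The only loss is then the $s^{-2}$ term, which is $\lesssim\lambda^{2\kappa}$ on $I$. This gives $\|\partial_sP_0v\|\lesssim\sqrt{\ell_\lambda^I(P_0v)}+\lambda^{\kappa}\|P_0v\|$. That implies your inequality, since $\kappa<1-q\kappa$, and together with Step 2 it reduces to $\|\partial_sP_0v\|\lesssim\sqrt{\ell_\lambda^I(P_0v)}$.

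Even with this bound, the cross terms are not of the form (small factor)$\cdot(\dots)$. Take the pairing of $i\lambda s^qt\tilde v$ with $(\partial_s-\frac{q}{2s}-i\lambda s^qt)\overline{P_0v}$, and likewise the $\partial_s\overline{P_0v}\,\tilde v$ term. Both are bounded by $c\sqrt{\ell_\lambda^I(v)\,\ell_\lambda^I(P_0v)}$ with $c\sim a_{max}b_1^{2q}$ independent of $\lambda$, and nothing makes $c$ small. Inserting your a priori bound with $M$ large then produces, after Young's inequality, an error of size $\sim M\lambda^{2(1-q\kappa)}\|v\|^2$, which swamps the main term from Step 2.

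The idea you are missing is that comparability suffices. The paper keeps $\|\tilde v\|\lesssim\lambda^{-1/2}\sqrt{\ell_\lambda^I(v)}$ in that form and obtains $\ell_\lambda^I(P_0v)\le\ell_\lambda^I(v)+C\sqrt{\ell_\lambda^I(P_0v)\,\ell_\lambda^I(v)}$. The elementary inequality (\ref{eq: abx}) then yields $\ell_\lambda^I(P_0v)\le C'\ell_\lambda^I(v)$, after which Steps 1 and 2 finish the proof.

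Your fixes cannot replace this. Absorbing $\lambda\|\tilde v\|^2$ into $\ell_\lambda^I(\tilde v)\ge E_2(-\Delta^N_{a_-,a_+})\,b_1^{-2q}\lambda\|\tilde v\|^2$ by Young's inequality only works if $a_{max}^2b_1^{4q}$ is small compared with $E_2(-\Delta^N_{a_-,a_+})$. Shrinking $b_1$ is ruled out twice over. The statement is for all $b_1>0$, and Proposition \ref{lem: lower bound Omega} needs $b_1$ large so that Lemma \ref{lem: lower bound away from corner} applies.
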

\begin{proof}
    Let $b_1,b_2 >0$ and $ \kappa \in (\frac{1}{2q}, \frac{1}{q+1})$.
    Let $v$ be a normalised eigenfunction of $Q_\lambda^I$ associated with the first eigenvalue of $Q_\lambda^I$ and $P_0: L^2(\Pi_I) \to L^2(\Pi_I)$ be the projection defined in (\ref{eq: Pi 0}). As before, we write $v = P_0v + \tilde v $ and associate $P_0 v$ with its corresponding function in $L^2(I)$ depending on the given norm. It holds
    \begin{equation}
    \label{eq: idk anymore}
    E_1(L_\lambda^I ) \|v\|^2_{L^2(\Pi_I)} 
    = \ell_\lambda^I(v) = \ell_\lambda^I (P_0 v + \tilde v)
    \geq \ell_\lambda^I(P_0 v) - 2 |\ell_\lambda^I(P_0 v, \tilde v)|
    \end{equation}
    Due to Lemma \ref{lem: model operator middle piece} and Lemma \ref{lem: upper bound corner}, we already have a suitable lower bound for $\ell_\lambda^I (P_0v)$: Because of the calculations done in Lemma \ref{lem: lower bound corner} we know that $P_0 v \in H^1_0(I) $ and there exists $C>0$ such that it holds:
    \begin{equation}
    \label{eq: idk anymore 2}
    \ell_\lambda^I(P_0v) = (a_+-a_-) \tilde t^I_\lambda(P_0 v) \geq  (a_+-a_-) E_1(T_\lambda^I) \|P_0 v\|^2_{L^2(I)} = C \lambda^{2(1-q\kappa)} \|P_0v\|^2_{L^2(\Pi_I)}.
    \end{equation}
    Thus, we only need to estimate $\ell_\lambda^I (P_0 v, \tilde v)$. Analogous to the proof of Lemma \ref{lem: lower bound corner}, we  write $\ell_\lambda^I(P_0 v,\tilde v)$ as
    \begin{align*}
    \int\limits_I \int\limits_{a_-}^{a_+}  \left( i \lambda s^q t \tilde v -\frac{qt}{s} \partial_t \tilde v\right) \left(\partial_s \overline{P_0 v} - \frac{q}{2s} \overline{P_0 v} - i\lambda s^q t\overline{P_0 v}\right) + i \lambda s^q t \partial_s \overline{P_0 v} \tilde v  + i \frac{3}{2} \lambda qs^{q-1} t \overline{P_0 v} \tilde v   \, \ddd t \ddd s.
    \end{align*}
    And with Cauchy-Schwarz, we have
    \begin{align}
    \label{eq: Cauchy Schwarz 2}
    \begin{split}
    |\ell_\lambda^I(P_0v,\tilde v)| 
    &\leq \left(\sqrt{\lambda} b_1^q a_{max}\|\tilde v \|_{L^2(\Pi_I)} +\left\|\frac{qt}{s } \partial_t \tilde v \right\|_{L^2(\Pi_I)}\right) \left\|\partial_s P_0v +(i\lambda s^q t- \frac{q}{2s}) P_0 v  \right\|_{L^2(\Pi_I)} \\ 
    &+ \sqrt{\lambda } b_1^q a_{max} \| \partial_s P_0 v \|_{L^2(\Pi_I)} \| \tilde v \|_{L^2(\Pi_I)} + \lambda^{1-\frac{q-1}{2q}} \frac{3}{2}qb_1^{q-1} a_{max} \| P_0 v\|_{L^2(\Pi_I)} \| \tilde v\|_{L^2(\Pi_I)}.
    \end{split}
    \end{align}
    One can show, analogous to (\ref{eq: upper bound tilde v}), (\ref{eq: qt:s partial t v upper bound}), and (\ref{eq: Clambda =}) that there exists $c_0 >0$ such that the following inequalities hold:

    \begin{align}
    \label{eq: v tilde leq}
    \begin{split}
        \| \tilde v \|_{L^2(\Pi_I)} 
        &\leq E_2(-\Delta^N_{a_-,a_+})^{-\frac{1}{2}} \| \partial_t \tilde v \|_{L^2(\Pi_I)}
        = \frac{b_1^q\lambda^{-\frac{1}{2}}}{\sqrt{E_2(-\Delta_{a_+,a_-})}} \| (b_1\lambda^{-\frac{1}{2q}})^{- q} \partial_t v \|_{L^2(\Pi_I)}\\
        &\leq  \frac{b_1^q\lambda^{-\frac{1}{2}}}{\sqrt{E_2(-\Delta_{a_+,a_-})}} \| s^{-q} \partial_t v \|_{L^2(\Pi_I)}
        \leq  \frac{b_1^q\lambda^{-\frac{1}{2}}}{\sqrt{E_2(-\Delta_{a_+,a_-})}} \sqrt{\ell_\lambda^I(v)}
        \leq c_0 \lambda^{-\frac{1}{2}} \sqrt{E_1(L_\lambda^I)} \|v \|_{L^2(\Pi_I)}
    \end{split}
    \end{align}
    \begin{equation}
    \label{eq: qts partialt tilde v leq}
    \left\|\frac{qt}{s} \partial_t \tilde v \right\|_{L^2(\Pi_I)} 
    \leq  q a_{max}\left\|s^{-1} \partial_t  v \right\|_{L^2(\Pi_I)}
    \leq  q a_{max} b_1^{q-1} \lambda^{-\frac{q-1}{2q}} \sqrt{\ell_\lambda^I(v)}
    \leq c_0 \lambda^{-\frac{q-1}{2q}} \sqrt{E_1(L_\lambda^I)} \|v \|_{L^2(\Pi_I)} ,
    \end{equation}
    \begin{equation}
    \label{eq: Clambda=2}
    \left\| \partial_s P_0 v + (i \lambda s^q t - \frac{q}{2s}) P_0 v \right\|_{L^2(\Pi_I)} = \sqrt{\ell_\lambda^I(P_0v)}.
    \end{equation}
    We can also strengthen the estimate for $\|\partial_s P_0 v\|_{L^2(\Pi_I)}$. Using the identity 
    \[ 
    \langle a+b,a+b \rangle \geq |a|^2+|b|^2-2|\langle a,b\rangle| \geq |a|^2 + |b|^2 - 2(\delta|a|)(\frac{|b|}{\delta}) \geq |a|^2(1-\delta^2) + |b|^2(1-\frac{1}{\delta^2}),
    \]
    for some $\delta >0$
    there exists $c_1 >0$ such that
    \begin{align*}
        \tilde t^I_\lambda (P_0 v) 
        \geq \int\limits_{I}(1-\delta^2) |\partial_s(P_0 v)|^2 + \lambda^2 s^{2q}|P_0v |^2 \left(\frac{(a_+-a_-)^2}{12} + (1-\frac{1}{\delta^2})\frac{(a_++a_-)^2}{4} \right) -\frac{c_1}{s^2} |P_0 v|^2 \ddd s    .
    \end{align*}
    We choose $\delta \in (0,1)$ sufficiently close to $1$ such that it holds
    \begin{equation*}
    \left(\frac{(a_+-a_-)^2}{12} + (1-\frac{1}{\delta^2})\frac{(a_++a_-)^2}{4} \right) \geq 0,
    \end{equation*}
    then it follows that one has for some $c_2>0$
    \begin{equation}
    \label{eq: I hate you 2}
    \| \partial_s P_0 v\|_{L^2(\Pi_I)} 
    \leq c_2 \sqrt{\ell_\lambda^I (P_0 v)} + c_2 \lambda^\kappa \| P_0 v \|_{L^2(\Pi_I)}.
    \end{equation}
    
    Substituting (\ref{eq: v tilde leq}), (\ref{eq: qts partialt tilde v leq}), (\ref{eq: Clambda=2}), and (\ref{eq: I hate you 2}) into (\ref{eq: Cauchy Schwarz 2}) we get
    \begin{align*}
    |\ell_\lambda^I(P_0 v, \tilde v)| 
    &\leq \sqrt{\ell_\lambda^I(P_0v)} \sqrt{E_1(L_\lambda^I)} \|v \|_{L^2(\Pi_I)} \left( c_0b_1^qa_{max}+ c_0\lambda^{-\frac{q-1}{2q} } + c_0c_2b_1^qa_{max}  \right) \\
    &+ \left(c_2b_1^qa_{max}\lambda^{\frac{1}{2}+\kappa} + \frac{3}{2}qb_1^{q-1}\lambda^{1-\frac{q-1}{2q}}\right)\|P_0v \|_{L^2(\Pi_I)} \| \tilde v \|_{L^2(\Pi_I)} .
    \end{align*}
    And by implementing (\ref{eq: idk anymore 2}) and (\ref{eq: v tilde leq}) we get for some $c_3>0$:
    \[
    |\ell_\lambda^I (P_0 v, \tilde v)| \leq \sqrt{\ell_\lambda^I(P_0v)} \sqrt{E_1(L_\lambda^I)} \|v \|_{L^2(\Pi_I)} c_3\left(1+ \lambda^{-\frac{q-1}{2q}} +\lambda^{-1+\kappa(q+1)} + \lambda^{q\kappa -\frac{2q-1}{2q}} \right).
    \]
    Since $-\frac{q-1}{2q} <0$, $-1+\kappa(q+1) <0 $ and $ q\kappa - \frac{2q-1}{2q} <0$ hold for any $\kappa \in (\frac{1}{2q}, \frac{1}{q+1})$, we can combine the equation above with (\ref{eq: idk anymore}) and (\ref{eq: idk anymore 2}) to get for sufficiently large $\lambda$
    \[
    \ell_\lambda^I (P_0 v) \leq E_1(L_\lambda^I) \|v \|^2_{L^2(\Pi_I)} + \sqrt{E_1(L_\lambda^I)} \| v \|_{L^2(\Pi_I)} 3c_3 \sqrt{\ell_\lambda^I(P_0 v)}.
    \]
    Using (\ref{eq: abx}) with $x=\sqrt{\ell_\lambda^I(P_0 v)}$, we arrive at
    \[
     E_1(L_\lambda^I) \|v \|^2_{L^2(\Pi_I)}\bigl(1+3c_3+9c_3^2\bigr) \geq \ell_\lambda^I(P_0 v) .
    \]
    Combining Lemma \ref{lem: model operator middle piece} with (\ref{eq: v tilde leq}) we get for some $c_4 >0$
    \[
    \| \tilde v \|^2_{L^2(\Pi_I)} \leq c_0^2 \lambda^{-1} E_1(L_\lambda^I) \| v \|^2_{L^2(\Pi_I)} \leq c_0^2 \lambda^{-1} E_1(T_\lambda^I) \| v\|^2_{L^2(\Pi_I)} \leq c_4 \lambda^{1-2q\kappa} \| v \|^2_{L^2(\Pi_I)},
    \]
    which allows us to conclude using (\ref{eq: idk anymore 2}) and the fact that $1-2q\kappa<0$ holds:
    \begin{align*}
    \bigl(1+3c_3+9c_3^2\bigr)E_1(L_\lambda^I ) \| v\|^2_{L^2(\Pi_I)} 
    &\geq \ell_\lambda^I (P_0 v)
    \geq C \lambda^{2(1-q\kappa)} \| P_0v \|^2_{L^2(\Pi_I)}\\
    &\geq C \lambda^{2(1-q\kappa)}\|v\|^2_{L^2(\Pi_I)}\bigl(1-c_4\lambda^{1-2q\kappa}\bigr) . \qedhere
    \end{align*}

\end{proof}

\subsection{Lower bound for the magnetic Laplacian away from the peak}
In this subsection, we set $I:= (b_1 \lambda^{-\frac{1}{2q}} , b)$ for some $b,b_1>0$. The goal is to construct a lower bound for $Q_\lambda^I$, which is achieved through an IMS partition allowing us to localise near the Neumann boundary of $V_I$.
\begin{notation}
    We start by constructing a one-sided tubular neighbourhood of the curved Neumann boundary of $V_I$ in $V_I$. Consider the curves 
    \[\Gamma_\pm(s) = (s,a_\pm s^q), \quad s \in (0,2b)\]  
    and their arc-length parametrizations $ \gamma_+,\gamma_- $. Set $\tau_\pm$ as their unit tangential vectors,\\
    $\nu_\pm = \begin{pmatrix}
        0&-1\\1&0
    \end{pmatrix}\tau_\pm$ 
    as their unit normal vectors, and their curvatures are denoted by
    \[
    k_\pm(s) := \langle \tau_\pm',\nu_\pm \rangle = \frac{a_\pm q(q-1)s^{q-2}}{(1+a_\pm^2q^2 s^{2q-2})^{\frac{3}{2}}}.
    \]
    We also define the rectangle
    \[ \Pi_\lambda := (\frac{b_1}{2}\lambda^{-\frac{1}{2q}},2b) \times (0,C_0\lambda^{-\frac{1}{2}}), 
    \quad C_0 = \sqrt{1+q^2 a_{max}^2 (2b)^{2q-2}} \frac{1}{2} (a_+-a_-) b_1^q
    \]
    and the functions
    \[ 
    \phi_\pm(s,t) := \gamma_\pm(s) \mp t \nu_\pm(s), \quad (s,t) \in \Pi_\lambda.
    \]
    Then the matrix $G_\pm$ with $  (G_\pm)_{ij} = \langle \partial_i \phi_\pm,\partial_j \phi_\pm \rangle$ takes the form 
    \[
    G_\pm =
    \begin{pmatrix}
        (1-tk_\pm(s))^2 & 0 \\0&1
    \end{pmatrix}.
    \]
    Thus, there exists $c_0>0$ such that for all $(s,t) \in \Pi_\lambda$ it holds
    \begin{equation} 
    \label{eq: tk leq}
    |tk_\pm(s)| \leq ta_{max} q(q-1) s^{q-2} \leq \begin{cases}
        c_0 \lambda^{-\frac{q-1}{q}}, & q \in (1,2)
        \\ c_0 \lambda^{-\frac{1}{2}}, & q \geq 2
    \end{cases} \quad =: g(\lambda)  \to 0 \quad \text{as }\lambda \to +\infty.
    \end{equation}
    Therefore, the matrix $G_\pm$ is invertible for sufficiently large $\lambda$, and because of the tubular neighbourhood theorem, we know that there exists $\lambda_0$ such that for all $\lambda \geq \lambda_0$ the functions $\phi_\pm$ define diffeomorphisms between  $\Pi_\lambda$ and the one-sided tubular neighourhoods $ S^\pm_\lambda:= \phi_\pm(\Pi_\lambda)$ of sections of $\Gamma_\pm$. A visualisation can be found on Figure \ref{fig: S lambda pm}.
\end{notation}

\begin{figure}[t]
    \centering
    \scalebox{0.8}{
    \begin{tikzpicture}[x=1pt,y=1pt,yscale=-1,xscale=1]

\draw    (240,256.02) -- (240,88.02) ;
\draw [shift={(240,86.02)}, rotate = 90] [fill={rgb, 255:red, 0; green, 0; blue, 0 }  ][line width=0.08]  [draw opacity=0] (8.4,-2.1) -- (0,0) -- (8.4,2.1) -- cycle    ;
\draw  [dash pattern={on 0.84pt off 2.51pt}]  (320.48,174.57) -- (326.29,188.95) ;
\draw  [dash pattern={on 0.84pt off 2.51pt}]  (469.6,93.62) -- (478,106.95) ;
\draw  [dash pattern={on 0.84pt off 2.51pt}]  (323.71,190.67) -- (320.29,206.1) ;
\draw  [dash pattern={on 0.84pt off 2.51pt}]  (476,231.24) -- (470,246.02) ;
\draw [fill={rgb, 255:red, 155; green, 155; blue, 155 }  ,fill opacity=0.5 ] [dash pattern={on 4.5pt off 4.5pt}]  (326.29,188.95) .. controls (380.29,166.67) and (435.71,134.95) .. (478,106.95) ;
\draw [fill={rgb, 255:red, 155; green, 155; blue, 155 }  ,fill opacity=0.5 ] [dash pattern={on 4.5pt off 4.5pt}]  (323.71,190.67) .. controls (393.43,200.95) and (442.57,219.81) .. (476,231.24) ;
\draw  [draw opacity=0][fill={rgb, 255:red, 155; green, 155; blue, 155 }  ,fill opacity=0.5 ] (476,231.24) -- (470,246.02) -- (320.29,206.1) -- (323.71,190.67) -- cycle ;
\draw [fill={rgb, 255:red, 255; green, 255; blue, 255 }  ,fill opacity=1 ]   (240.4,196.02) .. controls (355.2,203.62) and (439.14,235.81) .. (470,246.02) ;
\draw    (210.86,195.67) -- (518.86,195.67) ;
\draw [shift={(520.86,195.67)}, rotate = 180] [fill={rgb, 255:red, 0; green, 0; blue, 0 }  ][line width=0.08]  [draw opacity=0] (8.4,-2.1) -- (0,0) -- (8.4,2.1) -- cycle    ;
\draw  [draw opacity=0][fill={rgb, 255:red, 155; green, 155; blue, 155 }  ,fill opacity=0.5 ] (469.6,93.62) -- (478,106.95) -- (326.29,188.95) -- (320.48,174.57) -- cycle ;
\draw [fill={rgb, 255:red, 255; green, 255; blue, 255 }  ,fill opacity=1 ]   (240.4,196.02) .. controls (324.4,186.42) and (427.2,121.62) .. (470,93.62) ;

\draw (233,199.02) node [anchor=north west][inner sep=0.75pt]  [font=\tiny] [align=left] {$\displaystyle 0$};
\draw (390.1,143.33) node [anchor=north west][inner sep=0.75pt]  [font=\tiny] [align=left] {$\displaystyle S_{\lambda }^{+}$};
\draw (390.67,210.76) node [anchor=north west][inner sep=0.75pt]  [font=\tiny] [align=left] {$\displaystyle S_{\lambda }^{-}$};
\draw (371.73,136.33) node [anchor=north west][inner sep=0.75pt]  [font=\tiny] [align=left] {$\displaystyle \Gamma _{+}$};
\draw (370.93,223.93) node [anchor=north west][inner sep=0.75pt]  [font=\tiny] [align=left] {$\displaystyle \Gamma _{-}$};

\end{tikzpicture}
    }
    \caption{ $S_\lambda^\pm$, the one-sided tubular neighborhoods of $\Gamma_\pm$}
    \label{fig: S lambda pm}
\end{figure}
    This allows us to introduce the following operator:
    
\begin{definition}
    We will consider the magnetic Laplacian on $S^\pm_\lambda$ with Neumann boundary conditions on $\Gamma_\pm$ and Dirichlet boundary conditions on the rest of the boundary. 
    To be more precise, we set
    \begin{align*}
        D^\pm_\lambda =  \Bigl\{  f \in H^1(S^\pm_\lambda) \mid &f = 0 \text{ on }  \phi_\pm \left( (\frac{b_1}{2}\lambda^{-\frac{1}{2q}},2b) \times \{ C_0 \lambda^{-\frac{1}{2}} \}\right), \\
        & f=0  \text{ on } \phi_\pm \left( \{\frac{b_1}{2}\lambda^{-\frac{1}{2q}},2b \} \times (0,C_0 \lambda^{-\frac{1}{2}})\right) \Bigr\}
    \end{align*}
    and denote by $Q^\pm_\lambda$ the self-adjoint operators in $L^2$ generated by the quadratic forms
    $q^\pm_\lambda (f) = n_\lambda^{S_\lambda^\pm}(f) $ with domain $ 
    D(q_\lambda^\pm) =D_\lambda^\pm $.
\end{definition}

The following lower bound can be shown for this magnetic Laplacian:

\begin{lemma}
\label{lem: lower bound on stripes}
For any $b,b_1>0$ it holds 
\[ Q_{\lambda}^{\pm} \geq \frac{\Theta_0}{2} \lambda \]
as $\lambda \to + \infty$.
\end{lemma}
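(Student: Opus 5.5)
We pass to the tubular coordinates $(s,t)\in\Pi_\lambda$ via $\phi_\pm$, compare with the half-plane operator of Lemma \ref{lem: 2-dim model op}, and absorb all curvature and gauge errors, which are small relative to $\lambda$. It suffices to treat $Q^+_\lambda$; the case of $Q^-_\lambda$ is identical up to orientation.

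\textbf{Coordinates and gauge.} For $f\in D^+_\lambda$ set $u=f\circ\phi_+$. Since $G_+=\mathrm{diag}((1-tk_+)^2,1)$, with $m:=1-tk_+(s)$ we get
\[
q^+_\lambda(f)=\int_{\Pi_\lambda}\Bigl(m^{-2}|(\partial_s-i\lambda \tilde A_1)u|^2+|(\partial_t-i\lambda\tilde A_2)u|^2\Bigr)m\ddd s\ddd t,\qquad \|f\|^2=\int_{\Pi_\lambda}|u|^2m\ddd s\ddd t,
\]
where $\tilde A=(D\phi_+)^T A\circ\phi_+$ has curl $m$ in the new variables. By \eqref{eq: tk leq}, $|m-1|\le g(\lambda)\to0$ uniformly on $\Pi_\lambda$. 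I then perform a gauge transformation in $(s,t)$ replacing $\tilde A$ by
\[
\hat A=\bigl(-t+\tfrac{t^2}{2}k_+(s),\,0\bigr),
\]
which has the same curl $1-tk_+$. Since $\Pi_\lambda$ is simply connected, the two potentials differ by a gradient, so this is a unitary change that does not affect the spectrum.

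\textbf{Comparison with the half-plane.} Using $1-g(\lambda)\le m\le 1+g(\lambda)$, I bound
\[
q^+_\lambda(f)\ge \frac{1-g}{(1+g)^2}\int_{\Pi_\lambda}|(\partial_s+i\lambda t-i\lambda\tfrac{t^2}{2}k_+)u|^2+|\partial_tu|^2 ,\qquad \|f\|^2\le(1+g)\|u\|^2_{L^2(\Pi_\lambda)}.
\]
Next I split off the perturbation with $|x+y|^2\ge(1-\varepsilon)|x|^2-\varepsilon^{-1}|y|^2$. The resulting error term is bounded by
\[
\varepsilon^{-1}\lambda^2 t^4k_+^2\le \varepsilon^{-1}\lambda^2 (C_0\lambda^{-1/2})^2 g(\lambda)^2\|u\|^2=\varepsilon^{-1}C_0^2\lambda\, g(\lambda)^2\|u\|^2 ,
\]
using $t\le C_0\lambda^{-1/2}$ and $|tk_+|\le g$. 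Choosing $\varepsilon=g(\lambda)$ makes this $o(\lambda)$. The main term, for $u$ extended by zero beyond the Dirichlet parts of the boundary, is a function in $H^1(\R^2_+)$ with coordinates $x=s$, $y=t$. It is exactly $\tilde n^{\R^2_+}_\lambda(u)$ with the sign conventions of Lemma \ref{lem: 2-dim model op}, possibly after the reflection $s\mapsto -s$ or complex conjugation, neither of which changes the bound. Hence the main term is at least $\Theta_0\lambda\|u\|^2$.

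\textbf{Conclusion and main obstacle.} Combining the two steps,
\[
q^+_\lambda(f)\ge\bigl((1-g)^3\Theta_0\lambda-C\lambda g\bigr)(1+g)^{-1}\|f\|^2\ge\tfrac{\Theta_0}{2}\lambda\|f\|^2
\]
for large $\lambda$, and the statement follows by min-max. The main obstacle is the gauge step: one must check that the potential really reduces to $\hat A$ and that the quadratic correction $\lambda t^2k_+$ is controlled only on the thin strip of width $\lambda^{-1/2}$. This is where the choice of width $C_0\lambda^{-1/2}$ together with $s\ge \tfrac{b_1}{2}\lambda^{-1/(2q)}$ matters, through the bound \eqref{eq: tk leq}, which also covers the singular curvature $s^{q-2}$ when $q<2$.
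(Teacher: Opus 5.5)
Your proposal is correct and follows the paper's argument: tubular coordinates via $\phi_\pm$ with a gauge change to the potential $\bigl(-t+\tfrac{t^2}{2}k_\pm(s),0\bigr)$ (the paper simply quotes this from Fournais--Helffer, Appendix B), the bound $|tk_\pm|\le g(\lambda)\to 0$, the splitting $|x+y|^2\ge(1-\varepsilon)|x|^2-\varepsilon^{-1}|y|^2$ to discard the quadratic correction, and extension by zero to $\R^2_+$ followed by Lemma \ref{lem: 2-dim model op}. Your error bound $\varepsilon^{-1}C_0^2\lambda\, g(\lambda)^2$ correctly keeps the factor $\lambda^2$ that the paper's displayed estimate drops. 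Your remark about fixing the orientation of the curl by complex conjugation addresses a sign the paper leaves implicit: $\phi_+$ reverses orientation, so the pulled-back field is $-(1-tk_+)$ rather than $1-tk_+$, and your gauge step only goes through once you conjugate.
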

\begin{proof}
    Let $b,b_1 >0$.
    By utilizing the diffeomorphisms $\phi_\pm$ and a gauge transformation, it is shown in \cite[Appendix B]{Forunais_2006} that there exists a unitary operator 
    \[
     \Phi_\pm:L^2(S_\lambda^\pm) \to L^2(\Pi_\lambda,1-tk_\pm), \quad \Phi_\pm(f) =: u,
    \]
    such that 
    \begin{align*} 
    &q_\lambda^\pm (f) = \tilde q_\lambda^\pm(u)
    := \int \limits_{\Pi_\lambda} (1-tk_\pm(s))^{-1}|\partial_s u - i \lambda (-t + \frac{t^2 k_\pm(s)}{2})u |^2 + (1-tk_\pm(s))|\partial_t u|^2 \ddd s \ddd t ,\\
    &D(\tilde q_\lambda^\pm) = \{ u \in H^1(\Pi_\lambda,1-tk_\pm(s) ) \mid u(0,t) = u (2b,t)=u(s,C_0 \lambda^{-\frac{1}{2}}) = 0 \ \forall \ t\in(0,C_0\lambda^{-\frac{1}{2}}),\ s \in (0,2b) \}.
    \end{align*}
    It is apparent that $\tilde Q_\lambda^\pm$ is unitary equivalent to $Q_\lambda^\pm$ and we can estimate $\tilde q_\lambda^\pm$ by implementing (\ref{eq: tk leq}) and the inequality
    $ |x+y|^2 \geq (1-\varepsilon)|x|^2 - \frac{1}{\varepsilon}|y|^2 $
    : As $|1-tk_\pm(s)| \to 1$ on $\Pi_\lambda$ as $\lambda \to + \infty$ there exists for any fixed $\varepsilon \in (0,1/4)$ a $\lambda_0>0$ such that for all $\lambda \geq \lambda_0$ it holds
    \begin{align*}
    \tilde q_\lambda^\pm(u)
    & \geq (1-\varepsilon)\int\limits_{\Pi_\lambda}|\partial_s u - i \lambda (-t + \frac{t^2k_\pm(s)}{2})u|^2 + |\partial_t u| \ddd t \ddd s \\
    &\geq (1-\varepsilon) \int \limits_{\Pi_\lambda} |\partial_t u|^2 + (1-\varepsilon) |\partial_su +i\lambda t u|^2 -\frac{1}{\varepsilon} |t^ 2 k_\pm(s) u|^2 \ddd t \ddd s   \\
    &\geq (1-\varepsilon)^2 \int \limits_{\Pi_\lambda} |\partial_t u|^2 + |\partial_s u + i \lambda t u|^2 - |u|^2 \frac{C_0}{\varepsilon} \lambda^{-\frac{1}{2}}g(\lambda) \ddd t \ddd s  \\
    &\geq (1-\varepsilon)^2 \int \limits_{\R^2_+} |\partial_t \tilde u|^2 + |\partial_s \tilde u + i \lambda t \tilde u|^2 - |\tilde u|^2 \frac{C_0}{\varepsilon}\lambda^{-\frac{1}{2}}g(\lambda) \ddd t\ddd s,
    \end{align*}
    where $\tilde u$ is defined as the continuation of $u$ with zero onto $\R^2_+ $. Since $1-tk_\pm \in(1-\varepsilon,1+\varepsilon) $ on $\Pi_\lambda$, one has $u \in H^1(\Pi_\lambda)$ and, therefore $\tilde u \in H^1(\R^2_+) =D( \tilde n_\lambda^{\R^2_+})$. Due to Lemma \ref{lem: 2-dim model op}, we know that $\tilde N^{\R^2_+}_\lambda$ is bounded from below by $ \Theta_0 \lambda $ and we get for $\lambda $ large enough
    \[
    Q_\lambda^{\pm} \geq  (1-\varepsilon)^2 \left( \lambda \Theta_0 - \frac{C_0}{\varepsilon}\lambda^{-\frac{1}{2}}g(\lambda) \right) \geq \frac{\Theta_0}{2}\lambda. \qedhere
    \]
\end{proof}

This enables us to prove the lower bound for $Q_\lambda^I$ where $I = (b_1\lambda^ {-\frac{1}{2q}},b)$.

\begin{lemma}
\label{lem: lower bound away from corner} 
    For any $b>0$ there exist $b_1,\lambda_0,C>0$ such that for all $\lambda \geq \lambda_0$ one can set $I:= (b_1 \lambda^{-\frac{1}{2q}}, b)$ and it holds
    $$ Q_\lambda^{I} \geq C \lambda $$
    as $\lambda \to + \infty$.
\end{lemma}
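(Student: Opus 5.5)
We use an IMS partition of $V_I$ into three pieces. Two of them are boundary layers along the curved Neumann parts $\Gamma_\pm$, where Lemma \ref{lem: lower bound on stripes} applies. The third is an interior piece, where the functions vanish on the whole boundary and Lemma \ref{lem: lower bound dirichlet} applies.

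\textbf{Choice of $b_1$.} The stripes $S_\lambda^\pm$ have width $C_0\lambda^{-1/2}$. The vertical width of $V_I$ at abscissa $x$ is $(a_+-a_-)x^q \geq (a_+-a_-)b_1^q\lambda^{-1/2}$. The constant $C_0$ contains the factor $\tfrac12(a_+-a_-)b_1^q$, together with the factor $\sqrt{1+q^2a_{max}^2(2b)^{2q-2}}$ that accounts for the slope of $\Gamma_\pm$. So $C_0$ is chosen exactly so that the layers $\phi_\pm(\Pi_\lambda)$ of normal width $C_0\lambda^{-1/2}$ fit inside $V_{(b_1\lambda^{-1/2q}/2,\,2b)}$. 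The two layers meet at most near the middle line and do not cross. Taking $b_1$ large enough, or adjusting the width constant slightly (e.g.\ using $\tfrac12 C_0$ for the cut-off support), keeps the layers disjoint within $V_I$ for $\lambda$ large.

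\textbf{Partition of unity.} Choose cut-offs $\chi_+,\chi_-,\chi_0$ on $V_I$ with $\chi_+^2+\chi_-^2+\chi_0^2=1$ and the following properties:
\begin{itemize}
\item $\chi_\pm$ equals $1$ within normal distance $\tfrac14 C_0\lambda^{-1/2}$ of $\Gamma_\pm$ and is supported within distance $\tfrac12C_0\lambda^{-1/2}$, i.e.\ functions of $t$ in the tubular coordinates;
\item $\chi_0$ is supported away from $\Gamma_\pm$;
\item $|\nabla\chi_j|\le c\lambda^{1/2}$.
\end{itemize}
For $f\in D(q_\lambda^I)$, $f$ vanishes near $x=b_1\lambda^{-1/2q}$ and $x=b$. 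Hence $\chi_\pm f$, extended by zero, lies in $D_\lambda^\pm$, since the $s$-range of $\Pi_\lambda$ covers $I$ with margin. Likewise $\chi_0 f\in H^1_0(V_I)$. Lemma \ref{lem: IMS partition} then gives
\[
q_\lambda^I(f)+c\lambda\|f\|^2\ \ge\ q_\lambda^+(\chi_+f)+q_\lambda^-(\chi_-f)+d_\lambda^{V_I}(\chi_0f)\ \ge\ \min\{\tfrac{\Theta_0}{2},1\}\,\lambda\|f\|^2 .
\]
This uses Lemma \ref{lem: lower bound on stripes} and Lemma \ref{lem: lower bound dirichlet}; the magnetic forms agree because they are all $n_\lambda$ with the same potential.

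\textbf{Main obstacle.} The localisation error is of the same order $\lambda$ as the gain, so the constants must be tracked. Scale the cut-offs by a parameter: let the transition happen over normal length $\delta C_0\lambda^{-1/2}$ with the $\chi_\pm$-support of width $C_0\lambda^{-1/2}$. The gradient error is then $\le c\,\delta^{-2}C_0^{-2}\lambda$. Since $C_0\propto b_1^q$, this error is at most $c'\,b_1^{-2q}\lambda$. Choosing $b_1$ large makes it smaller than $\tfrac12\min\{\Theta_0/2,1\}\lambda$, which yields $Q_\lambda^I\ge C\lambda$.

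The freedom to choose $b_1$ in the statement is exactly what allows this. The remaining care is geometric: the layers must fit in the thin domain, with cut-offs of width comparable to $b_1^q\lambda^{-1/2}$ and $k_\pm t\to0$ by (\ref{eq: tk leq}), uniformly for $x\in I$ as $\lambda\to+\infty$.
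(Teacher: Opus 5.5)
Your analytic skeleton is the paper's. You use an IMS partition of $V_I$ into two boundary pieces controlled by Lemma \ref{lem: lower bound on stripes} and an interior Dirichlet piece controlled by Lemma \ref{lem: lower bound dirichlet}. The localisation error is of order $b_1^{-2q}\lambda$ because all cut-off scales are comparable to $b_1^q\lambda^{-1/2}$, and $b_1$ is chosen large at the end.

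The real difference is the cut-offs. The paper uses functions of the vertical distance $\mp(y-a_\pm x^q)$, supported in $U^\pm_\lambda$. There the vertical depth is below $\frac12(a_+-a_-)b_1^q\lambda^{-1/2}$, i.e.\ less than half the peak width on $I$. So disjointness of the supports and smoothness of $\chi^0_\lambda$ across the mid-line come for free, and the work goes into the explicit boundary computation proving $U^\pm_\lambda\subseteq S^\pm_\lambda$. You use functions of the normal coordinate $t$ of $\phi_\pm$. Then $\supp\chi_\pm\subset S^\pm_\lambda$ is automatic, and $|\nabla t|=1$ gives a cleaner gradient bound. But the burden moves to showing that, inside $V_I$, the supports of $\chi_+$ and $\chi_-$ are disjoint and stay away from $\Gamma_\mp$. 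This is what makes $\chi_0=(1-\chi_+^2-\chi_-^2)^{1/2}$ a legitimate smooth cut-off vanishing near $\Gamma_\pm$, and what puts the zero extension of $\chi_\pm f$ in $D^\pm_\lambda$.

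Your treatment of exactly this step is incorrect.

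\emph{The layers do not fit in the peak.} At $s=\frac{b_1}{2}\lambda^{-1/(2q)}$ the peak has vertical width $2^{-q}(a_+-a_-)b_1^q\lambda^{-1/2}<\frac12(a_+-a_-)b_1^q\lambda^{-1/2}\le C_0\lambda^{-1/2}$. So each layer $\phi_\pm(\Pi_\lambda)$ crosses the opposite curve. The slope factor in $C_0$ serves the opposite purpose: it makes $S^\pm_\lambda$ thick enough to contain $U^\pm_\lambda$. Neither fitting nor disjointness of the layers is needed anyway, since $Q^+_\lambda$ and $Q^-_\lambda$ act on separate summands.

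\emph{Your remedies for disjointness of the cut-off supports fail.} Enlarging $b_1$ changes nothing, because the peak width at $x=b_1\lambda^{-1/(2q)}$ and $C_0\lambda^{-1/2}$ both scale like $b_1^q\lambda^{-1/2}$. Near that end the slopes of $\Gamma_\pm$ tend to $0$, so normal and vertical distances agree. The peak width there equals $2C_0\lambda^{-1/2}/\sqrt{1+q^2a_{max}^2(2b)^{2q-2}}$. Hence:
\begin{itemize}
\item supports of normal width $\frac12C_0\lambda^{-1/2}$ overlap as soon as $q\,a_{max}(2b)^{q-1}>\sqrt3$;
\item above $\sqrt{15}$ even the regions where $\chi_\pm=1$ overlap, giving $\chi_+^2+\chi_-^2=2$ there;
\item with the support width $C_0\lambda^{-1/2}$ of your last paragraph, the supports always overlap.
\end{itemize}

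\emph{Fix.} Decouple the cut-off width from $C_0$. Take $\chi_\pm=\chi(t/\rho)$ with $\rho=\eta(a_+-a_-)b_1^q\lambda^{-1/2}$, where $\eta>0$ depends only on $a_\pm,q,b$, for instance $\eta\,(2+2q\,a_{max}(2b)^{q-1})<1$. Compare the heights of points of $\Gamma_+$ and $\Gamma_-$ lying within distance $\rho$ of a common point of $V_I$, using the mean value theorem and $x^q>b_1^q\lambda^{-1/2}$ on $I$. This shows the supports are disjoint and avoid $\Gamma_\mp$. They also lie in $\{t<C_0\lambda^{-1/2}\}$, and the IMS error is $O(\eta^{-2}b_1^{-2q}\lambda)$. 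Fixing $\eta$ first and then taking $b_1$ large completes your argument.
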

\begin{proof}
    Let $b>0$.
    Pick smooth functions 
    \[ 
    \chi,\tilde \chi: \R \rightarrow [0,1], \quad \chi(t) = \begin{cases}
        1, & t \leq \frac{1}{2} \\ 0, & t \geq \frac{3}{4}
    \end{cases}, \quad \chi^2+\tilde\chi^2 \equiv 1, \quad \chi_\infty:= \|\chi' \|_\infty^2  + \|\tilde \chi' \|_\infty^2
    \]
    and define with them the following cutoff functions on $V_I$:
    \[
    \chi_\lambda^\pm: V_I \to [0,1],\quad 
    \chi_\lambda^\pm(x,y) := \chi \left(\mp\left(y-a_\pm x^q\right) 2(a_+ -a_-)^{-1} b_1^{-q} \sqrt\lambda \right).
    \]
    Since one has
    \begin{align*}
    &\supp \chi_\lambda^+ \subsetneq \{ (x,y) \in V_I \mid  y > a_+x^q -\frac{a_+-a_-}{2}\lambda^{-\frac{1}{2}} b_1^q \}=: U_\lambda^+,\\
    &\supp \chi_\lambda^- \subsetneq \{ (x,y) \in V_I \mid  y < a_-x^q +\frac{a_+-a_-}{2}\lambda^{-\frac{1}{2}} b_1^q \} =: U_\lambda^-,
    \end{align*}
    the function 
    \[
    \chi_\lambda^0: V_I \to [0,1], \quad 
    \chi_\lambda^0(x,y) = \begin{cases}
         \tilde \chi \left(\left(y-a_+ x^q\right) 2(a_- -a_+)^{-1} b_1^{-q} \sqrt\lambda \right), & y \geq \frac{a_+ +a_-}{2}x^q \\
         \tilde\chi \left(\left(y-a_- x^q\right) 2(a_+ -a_-)^{-1} b_1^{-q} \sqrt\lambda \right),  & y < \frac{a_+ +a_-}{2}x^q
    \end{cases}
    \]
    is $C^\infty$ and it holds $(\chi_\lambda^+)^2 +(\chi_\lambda^-)^2+(\chi_\lambda^0)^2\equiv 1$. A visualization of the domains $U_\lambda^\pm$ that contain the supports of $\chi_\lambda^\pm$ can be found on Figure \ref{fig: U lambda pm} (A).
    \begin{figure}[t]
        \centering
        \begin{subfigure}[b]{0.45\textwidth}
        \scalebox{0.7}{
            \begin{tikzpicture}[x=0.65pt,y=0.65pt,yscale=-1,xscale=1]

\draw    (129.83,246.08) -- (130.33,50.58) ;
\draw [shift={(130.33,48.58)}, rotate = 90.15] [fill={rgb, 255:red, 0; green, 0; blue, 0 }  ][line width=0.08]  [draw opacity=0] (8.4,-2.1) -- (0,0) -- (8.4,2.1) -- cycle    ;
\draw    (111.33,180.08) -- (518.83,180.08) ;
\draw [shift={(520.83,180.08)}, rotate = 180] [fill={rgb, 255:red, 0; green, 0; blue, 0 }  ][line width=0.08]  [draw opacity=0] (8.4,-2.1) -- (0,0) -- (8.4,2.1) -- cycle    ;
\draw    (268.33,138.83) -- (267.83,201.58) ;
\draw  [draw opacity=0][fill={rgb, 255:red, 155; green, 155; blue, 155 }  ,fill opacity=0.5 ] (268.33,138.83) -- (404.33,57.58) -- (404.33,241.08) -- (267.83,201.58) -- cycle ;
\draw [fill={rgb, 255:red, 255; green, 255; blue, 255 }  ,fill opacity=1 ]   (268.33,138.83) .. controls (311.33,117.83) and (363.83,85.08) .. (404.33,57.58) ;
\draw [fill={rgb, 255:red, 255; green, 255; blue, 255 }  ,fill opacity=1 ]   (267.83,201.58) .. controls (324.33,215.08) and (370.33,230.08) .. (404.33,241.08) ;
\draw  [draw opacity=0][fill={rgb, 255:red, 255; green, 255; blue, 255 }  ,fill opacity=1 ] (404.08,88.96) -- (404.58,209.71) -- (268.08,170.21) -- cycle ;
\draw [fill={rgb, 255:red, 155; green, 155; blue, 155 }  ,fill opacity=0.5 ]   (268.08,170.21) .. controls (276.59,172.24) and (284.87,174.31) .. (292.89,176.39) .. controls (338.17,188.14) and (375.7,200.37) .. (404.58,209.71) ;
\draw    (404.33,57.58) -- (404.33,241.08) ;
\draw    (111.33,180.08) -- (518.83,180.08) ;
\draw [shift={(520.83,180.08)}, rotate = 180] [fill={rgb, 255:red, 0; green, 0; blue, 0 }  ][line width=0.08]  [draw opacity=0] (8.4,-2.1) -- (0,0) -- (8.4,2.1) -- cycle    ;
\draw [fill={rgb, 255:red, 155; green, 155; blue, 155 }  ,fill opacity=0.5 ]   (268.08,170.21) .. controls (311.08,149.21) and (363.58,116.46) .. (404.08,88.96) ;

\draw (118,184.5) node [anchor=north west][inner sep=0.75pt]  [font=\scriptsize] [align=left] {$\displaystyle 0$};
\draw (323,112) node [anchor=north west][inner sep=0.75pt]  [font=\scriptsize] [align=left] {$\displaystyle U_{\lambda }^{+}$};
\draw (319.67,195.33) node [anchor=north west][inner sep=0.75pt]  [font=\scriptsize] [align=left] {$\displaystyle U_{\lambda }^{-}$};

\end{tikzpicture}
        }
        \caption{$U_\lambda^\pm$ containing the support of $ \chi_\lambda^\pm $}
        \end{subfigure}
        \begin{subfigure}[b]{0.45\textwidth}
         \scalebox{0.7}{
         
\begin{tikzpicture}[x=0.95pt,y=0.95pt,yscale=-1,xscale=1]

\draw  [dash pattern={on 4.5pt off 4.5pt}]  (250,160) -- (254.83,196.92) ;
\draw  [dash pattern={on 4.5pt off 4.5pt}]  (420,90) -- (430.82,104.88) -- (442.83,121.42) ;
\draw [fill={rgb, 255:red, 155; green, 155; blue, 155 }  ,fill opacity=0.5 ] [dash pattern={on 0.84pt off 2.51pt}]  (254.83,196.92) .. controls (325.33,188.42) and (403.83,151.42) .. (442.83,121.42) ;
\draw  [draw opacity=0][fill={rgb, 255:red, 155; green, 155; blue, 155 }  ,fill opacity=0.5 ] (420,90) -- (442.83,121.42) -- (254.83,196.92) -- (249.87,160.67) -- cycle ;
\draw [fill={rgb, 255:red, 255; green, 255; blue, 255 }  ,fill opacity=1 ]   (250,160) .. controls (320.83,150.42) and (380,120) .. (420,90) ;

\draw (312.53,115.73) node [anchor=north west][inner sep=0.75pt]   [align=left] {$\displaystyle \Gamma _{+}$};
\draw (317.73,148.73) node [anchor=north west][inner sep=0.75pt]   [align=left] {$\displaystyle S_{\lambda }^{+}$};
\draw (216.93,176.33) node [anchor=north west][inner sep=0.75pt]  [font=\scriptsize] [align=left] {$\displaystyle \partial _{1} S_{\lambda }^{+}$};
\draw (434.53,83.93) node [anchor=north west][inner sep=0.75pt]  [font=\scriptsize] [align=left] {$\displaystyle \partial _{2} S_{\lambda }^{+}$};
\draw (356.13,181.13) node [anchor=north west][inner sep=0.75pt]  [font=\scriptsize] [align=left] {$\displaystyle \partial _{*} S_{\lambda }^{+}$};

\end{tikzpicture}

         }
         \caption{ $S_\lambda^+$ and its boundary components }
        \end{subfigure}
        \caption{Visualizations of $U_\lambda^\pm$ and $S_\lambda^\pm$}
        \label{fig: U lambda pm}
    \end{figure}
    This allows us to do an IMS partition. 
    Set
    \[
    J: H^1_I(V_I) \to D(q_\lambda^+) \oplus D(q_\lambda^-) \oplus H^1_0(V_{(0,b)}), \quad J(f) = (\chi_\lambda^+ f,\chi_\lambda^-f,\chi_\lambda^0 f),
    \]
    where we extend $\chi_\lambda^0f$ with $0$ onto $V_{(0,b)}$ and  $\chi_\lambda^\pm f$ with $0$ onto $S_\lambda^\pm$ respectively and get with Lemma \ref{lem: IMS partition}
    \[
    q_\lambda^I(f) \geq 
    (q_\lambda^+ \oplus q_\lambda^-\oplus d_\lambda^{V_{(0,b)}}) (J f) - c_0\lambda \| f \|^2_{L^2(V_I)},
    \]
    where $D_\lambda^{V_{(0,b)}}$ is the magnetic Dirichlet Laplacian on $V_{(0,b)}$ and $c_0$ is defined as
    \[
    c_0 := 8 \left(1+a^2_{max}q^2b^{2(q-1)}\right) (a_+ - a_-)^{-2} \chi_\infty b_1^{-2q}\geq  \bigl(\|\nabla \chi_\lambda^+ \|^2_\infty + \|\nabla \chi_\lambda^- \|^2_\infty + \| \nabla \chi_\lambda^0 \|_\infty ^2 \bigr),
    \]
    because we have

    \begin{align*}
    &|\partial_x \chi_\lambda^\pm|^2 
    \leq |2(a_+-a_-)^{-1} b_1^{-q} \sqrt{\lambda} a_\pm qx^{q-1} \|\chi' \|_\infty |^2
    \leq \lambda 4 (a_+-a_-)^{-2} b_1^{-2q} a_{max}^2q^2 b^{2(q-1)} \| \chi' \|_\infty^2, \\
    &|\partial_y \chi_\lambda^\pm|^2 
    \leq |2(a_+-a_-)^{-1} b_1^{-q} \sqrt{\lambda}   \|\chi' \|_\infty |^2
    \leq \lambda 4 (a_+-a_-)^{-2} b_1^{-2q}   \| \chi' \|_\infty^2.
    \end{align*}
    If we can show that $\chi_\lambda^\pm f \in D(q_\lambda^\pm)$ holds we get with Lemma \ref{comparing operators}:
    \[
    E_1(Q_\lambda^I) + c_0 \lambda \geq E_1(Q_\lambda^+ \oplus Q_\lambda^- \oplus D_\lambda^{V_{(0,b)}}) .
    \]
    Since due to Lemma \ref{lem: lower bound dirichlet} one has $D^{V_{(0,b)}}_\lambda \geq  \lambda$ we can combine this with Lemma \ref{lem: lower bound on stripes} to get
    \[
    E_1(Q_\lambda^I) \geq \lambda\left(\frac{\Theta_0}{2} - c_0 \right),
    \]
    and we arrive at our conclusion by choosing $b_1$ sufficiently large such that $c_0 < \frac{\Theta_0}{2}$ holds.

    It remains to show that the continuation of $\chi_\lambda^\pm f$ with $0$ onto $S_\lambda^\pm$ lies in $D(q_\lambda^\pm)$.
    Note that per construction and due to the Dirichlet boundary conditions imposed on the straight parts of $\partial V_I$, it suffices to prove that $ U_\lambda^\pm  \subseteq S_\lambda^\pm $ holds.
    
    Let us start by computing the boundary of $S_\lambda^\pm$, which we split up as shown on Figure \ref{fig: U lambda pm} (B) for $S_\lambda^+$.
    As the non-normalised tangential and normal vectors of $\Gamma_\pm$ are
    \[
    T_\pm(s) = \begin{pmatrix}
        1\\a_\pm qs^{q-1}
    \end{pmatrix}, \quad N_\pm(s) = \begin{pmatrix}
        -a_\pm qs^{q-1}\\1
    \end{pmatrix},
    \]
    the part of the  boundary that is parallel to $\Gamma_\pm$ can be described by
    \[
    \partial_* S_\lambda^\pm :=
    \left\{\Gamma_\pm(s) \mp C_0 \lambda^{-\frac{1}{2}}  \frac{N_\pm(s)}{\|N_\pm(s) \|} = \begin{pmatrix}
        s \pm C_0 \lambda^{-\frac{1}{2}} \bigl(1+a_\pm^2 q^2 s^{2q-2}\bigr)^{-\frac{1}{2} } a_\pm qs^{q-1} 
        \\ a_\pm s^q \mp C_0 \lambda^{-\frac{1}{2}} \bigl(1+a_\pm^2q^2 s^{2q-2}\bigr)^{-\frac{1}{2}}
        \end{pmatrix} \mid s\in \left(\frac{b_1}{2} \lambda^{-\frac{1}{2q}},2b\right) \right\}.
    \]    
    Let $(x,y) \in \partial_*S_\lambda^\pm $ then there exists $s \in (b_1/2 \lambda^{-\frac{1}{2q}}, 2b)$ such that
    \[
    \begin{pmatrix}
        x \\y
    \end{pmatrix}
    =\begin{pmatrix}
        s \pm C_0 \lambda^{-\frac{1}{2}} \bigl(1+a_\pm^2 q^2 s^{2q-2}\bigr)^{-\frac{1}{2} } a_\pm qs^{q-1} 
        \\ a_\pm s^q \mp C_0 \lambda^{-\frac{1}{2}} \bigl(1+a_\pm^2q^2 s^{2q-2}\bigr)^{-\frac{1}{2}}
        \end{pmatrix}
    =: \begin{pmatrix}
        s+f_\pm(s) \\a_\pm s^q \mp C_0 \lambda^{-\frac{1}{2}} \bigl(1+a_\pm^2q^2 s^{2q-2}\bigr)^{-\frac{1}{2}}
    \end{pmatrix}.
    \]
    We want to show that this representation exists for all $x \in (b_1 \lambda^{-\frac{1}{2q}},b)$ and that it holds
    \begin{equation}
    \label{eq: how could this happen to me}
    y \leq a_+x^q - \frac{a_+-a_-}{2} \lambda^{-\frac{1}{2}} b_1^q, \quad \text{or respectively } \quad y \geq a_-x^q + \frac{a_+-a_-}{2} \lambda^{-\frac{1}{2}} b_1^1.
    \end{equation}
    
    Case 1: It holds $a_+ \geq0$ or $a_- \leq0$. \\
    Note that one has $x= s+f_\pm(s)$ and it holds for $\lambda$ sufficiently large:
    \begin{equation}
    \label{eq: f leq smth}
    f_\pm(s) = \pm a_\pm  q\bigl( s^{-2(q-1)} +a^2_\pm q^2 \bigr)^{-\frac{1}{2}} \lambda^{-\frac{1}{2}} < \frac{b_1}{2} \lambda^{-\frac{1}{2q}}, \quad \forall s \in (\frac{b_1}{2}\lambda^{-\frac{1}{2q}},2b ).
    \end{equation}
    Combining this with the fact that $f_\pm(s) \geq 0$ for all $s>0$ one has
    \[
    \frac{b_1}{2}\lambda^{-\frac{1}{2q}} +f_\pm(\frac{b_1}{2} \lambda) <b_1\lambda^{-\frac{1}{2q}}, \quad 2b +f_\pm(2b)>b,
    \]
    and due to $s+f_\pm(s)$ being continuous we know that any $x \in (b_1\lambda^{-\frac{1}{2q}},b)$ can be represented as $x=s+f_\pm(s)$. Furthermore, due to $ s = x-f_\pm(s) \leq x$ we have the following estimate for $y$:
    \begin{equation*}
    y =a_+ s^q-C_0 \lambda^{-\frac{1}{2}} \bigl(1+a_+^2q^2s^{2q-2}\bigr)^{-\frac{1}{2}}
    \leq a_+ x^q - C_0 \lambda^{-\frac{1}{2}} \bigl(1+ a_{max}^2q^2(2b)^{2q-2}\big)^{-\frac{1}{2}} = a_+ x^q -  \frac{a_+-a_-}{2}b_1^q \lambda^{-\frac{1}{2}}.
    \end{equation*}
    As the estimate can be done analogously for $S_\lambda^-$ we arrive at (\ref{eq: how could this happen to me}) for the first case.

   Case 2: It holds $a_+ <0$ or $a_- >0$. \\
   Here, we have $f_\pm(s) <0$ for all $s>0$, which we can combine with an  estimate analogue to (\ref{eq: f leq smth}): It holds $f_\pm(s)>-b_1/2 \lambda^{-1/(2q)}$ for any $s \in (b_1/2 \lambda^{-1/(2q)},2b) $ and $\lambda$ sufficiently large and therefore
   \[
    \frac{b_1}{2}\lambda^{-\frac{1}{2q}} +f_\pm(\frac{b_1}{2} \lambda) <b_1\lambda^{-\frac{1}{2q}}, \quad 2b +f_\pm(2b)>2b-\frac{b_1}{2}\lambda^{-\frac{1}{2q}} >b .
   \]
    The continuity of $s+f_\pm(s)$ implies the representation $x = s+f_\pm(s)$ for any $x \in (b_1\lambda^{-1/(2q)},b)$ again. As one has $s=x-f_\pm(s) >x$, the estimate (\ref{eq: how could this happen to me}) for $y$ works analogously, which concludes our study of $\partial_* S_\lambda^\pm$. 
    
    Concerning the straight parts of the boundary, they consist of
    \[
    \partial_1S_\lambda^\pm =
    \left\{ 
    \begin{pmatrix} \frac{b_1}{2}\lambda^{-\frac{1}{2q}} \pm t \bigl(1+a_\pm^2q^2( \frac{b_1}{2}\lambda^{-\frac{1}{2q}})^{2q-2}\bigr)^{-\frac{1}{2}} a_\pm \bigl( \frac{b_1}{2}\lambda^{-\frac{1}{2q}} \bigr)^{q-1} \\
    a_\pm \bigl( \frac{b_1}{2}\lambda^{-\frac{1}{2q}} \bigr)^q \mp t \bigl(1+a^2_\pm q^2 ( \frac{b_1}{2}\lambda^{-\frac{1}{2q}})^{2q-2}\bigr)
    \end{pmatrix} \mid t\in(0,C_0\lambda^{-\frac{1}{2}})
    \right\}
    \]
    and
    \[
    \partial_2S_\lambda^\pm =
    \left\{ 
    \begin{pmatrix} 2b \pm t \bigl(1+a_\pm^2q^2(2b)^{2q-2}\bigr)^{-\frac{1}{2}} a_\pm q(2b)^{q-1} \\
    a_\pm (2b)^q \mp t \bigl(1+a^2_\pm q^2 (2b)^{2q-2} \bigr)
    \end{pmatrix} \mid t\in(0,C_0\lambda^{-\frac{1}{2}})
    \right\}.
    \]
    So due to $|f_\pm(s)| \leq b_1/2 \lambda^{-\frac{1}{2q}}$ it is clear that for sufficiently large $\lambda$ it holds 
    \[
    (\partial_1 S_\lambda^\pm \cup \partial_2 S_\lambda^\pm )\cap ( b_1\lambda^{-\frac{1}{2q}},b ) = \emptyset. 
    \] 
    To summarise: $\phi_\pm: \Pi_\lambda \to S_\lambda^\pm $ is a diffeomorphism and $\partial S_\lambda^\pm \cap (b_1 \lambda^{-\frac{1}{2q}},b) \times \R$ consists of the curve $(s,a_\pm s^q)$ and $(x,y) \in  V_I$ laying on a curve and fulfilling the estimate (\ref{eq: how could this happen to me}). Thus,
    \[
    \supp \chi_\lambda^\pm \subset U_\lambda^\pm \subset S_\lambda^\pm \cap (b_1 \lambda^{-\frac{1}{2q}}, b) \subset S_\lambda^\pm
    \]
    which concludes the proof.
\end{proof}

\section{Proof of the main Theorem}
\label{sec: Endzeit}

Let us first establish a rigorous definition of the class of domains we consider.

\begin{definition}
     An open, bounded, connected set $\Omega \subseteq \R^2$ is called  a curvilinear polygon with an outward peak at the origin of sharpness order $q$ if 
     \begin{itemize}
     \item There exist $\ell_0,\dots,\ell_M>0$ and injective, arc-length parametrized smooth 
     curves $\gamma_j:[0,\ell_j] \to \R^2$ such that
     \begin{itemize}
         \item The interiors $\gamma(0,\ell_j), j = 1,\dots ,M$ are pairwise disjoint.
         \item It holds $\gamma_j(\ell_j)=\gamma_{j+1}(0)$ $\forall j =1,M-1$ and $\gamma_M(\ell_M) = \gamma_1(0)=0$.
         \item One has the decomposition $\partial\Omega = \bigcup\limits_{j=1}^M \gamma_j([0,l_j])$.
     \end{itemize}
     \item Furthermore, we assume that each $\gamma_j$ is oriented in such a way that, for the outward normal $\nu_j(s)$ to the bounded enclosed region by $\Gamma$ at a point $s\in(0,l_j)$, we have $\nu_j(s)\wedge \gamma'(s)=1$. Moreover, let $\alpha_j \in [0,2\pi]$, $j =2,\dots, M$ be the angle between the tangent vectors of $\gamma_{j-1}$ and $\gamma_j$, defined by the relations
    \[
        \cos (\alpha_j) = - \langle \nabla \gamma_j(0), \nabla \gamma_{j-1} (l_{j-1})  \rangle, \quad  \sin ( \alpha_j) = -\det( \nabla \gamma_j (0) \; \,\,\, \nabla \gamma_{j-1}(l_{j-1}) ).
\]
    We assume that $ \alpha_j\notin \{  0, \pi,2\pi \} $ for all $j = 2, \ldots, M$ and that the singularity at $0$ is exactly of the form that
        \item There exist $b>0$ and $a_+,a_- \in \R$, $a_- < a_+$ such that it holds
     \[
     \Omega \cap (-b,b)^2 = V_{(0,b)}.
     \]
    
     \end{itemize}
\end{definition}

From now on, we assume that $\Omega \subseteq \R^2 $ is a curvilinear polygon with an outward peak at the origin of sharpness order $q$ for some fixed $b>0$ and $a_-,a_+ \in \R$, $a_-< a_+$. 

Let us proceed by establishing an upper bound for the eigenvalues of $N_\lambda^\Omega$.

\begin{proposition}
\label{lem: upper bound Omega}
    Let $n \in \N$ and $\kappa \in (0, \frac{1}{q+1})$. Then there exists $C>0$ such that
    \[
    E_n(N_\lambda^\Omega) \leq \lambda^{\frac{2}{q+1}}E_n(T_1)+C\lambda^{2\kappa} 
    \]
    holds as $\lambda \to + \infty$.
\end{proposition}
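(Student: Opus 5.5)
The upper bound follows from a direct domain-monotonicity argument: the peak section $V_{I}$ with $I=(0,\lambda^{-\kappa})$ sits inside $\Omega$, and the form domain of $Q_\lambda^I$ embeds into $H^1(\Omega)$ by extension with zero. I will therefore compare $N_\lambda^\Omega$ with $Q_\lambda^I$ via Lemma \ref{comparing operators} and then invoke Corollary \ref{cor: upper bound corner}.

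First I fix a gauge. On $\Omega$ the operator is defined with a potential $A$ satisfying $\mathrm{curl}\,A=1$. Since $\Omega$ need not be simply connected, I would argue as follows. The form $n_\lambda^\Omega$ only matters up to unitary equivalence, and on the neighbourhood $\Omega\cap(-b,b)^2=V_{(0,b)}$ we have $A-(0,x)=\nabla\phi$ for some smooth $\phi$, since $V_{(0,b)}$ is simply connected and the difference is curl-free. Multiplication by $e^{i\lambda\phi}$ then converts $n_\lambda^{V_{(0,b)}}$ computed with $A$ into the form computed with $(0,x)$. For simplicity one can also assume $A=(0,x)$ on $\Omega$ from the start, as the paper's Definition of $n_\lambda^U$ does.

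Next I define $J$ on the dense subspace $H^1_I(V_I)$ of $D(q_\lambda^I)$, with $I=(0,\lambda^{-\kappa})$ and $\lambda$ large enough that $\lambda^{-\kappa}<b$. For $f\in H^1_I(V_I)$, $f$ vanishes for $x\notin[c_0,c_1]\subset I$. Extending by zero (and multiplying by the gauge factor) gives a function $Jf\in H^1(\Omega)$. This works because the cut-off regions are the straight Dirichlet edges $\{x=c_0\}$ and $\{x=c_1\}$, and the support stays away from both of them, while the curved boundary parts of $V_I$ are genuinely parts of $\partial\Omega$, where no condition is imposed in $H^1(\Omega)$. The map $J$ preserves the $L^2$ norm and satisfies $n_\lambda^\Omega(Jf)=q_\lambda^I(f)$. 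Lemma \ref{comparing operators} then gives $\Lambda_n(N_\lambda^\Omega)\le\Lambda_n(Q_\lambda^I)$.

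Both operators have compact resolvent. For $Q_\lambda^I$ this follows from the unitary equivalence in Lemma \ref{lem: coordinate change}, or directly because the domain of $q_\lambda^I$ is contained in $H^1(V_I)$ with $V_I$ bounded and Lipschitz away from the cusp; at the cusp one can also get it from the bound via $T_\lambda^I$. Hence $\Lambda_n=E_n$ for both. Corollary \ref{cor: upper bound corner} gives $E_n(Q_\lambda^I)\le\lambda^{\frac{2}{q+1}}E_n(T_1)+C\lambda^{2\kappa}$, which is exactly the claimed bound.

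The only delicate point is the membership $Jf\in H^1(\Omega)$ near the cusp, together with the justification that $Q_\lambda^I$ has discrete spectrum, so that $E_n$ makes sense. If the latter causes trouble, I would state everything in terms of $\Lambda_n$: Corollary \ref{cor: upper bound corner} is really proved through min–max bounds, namely Lemma \ref{lem: upper bound corner} combined with Lemma \ref{lem: model operator comparison cropped}, so it gives a bound on $\Lambda_n(Q_\lambda^I)$ as well. Then $E_n(N_\lambda^\Omega)=\Lambda_n(N_\lambda^\Omega)\le\Lambda_n(Q_\lambda^I)$, and the conclusion follows.
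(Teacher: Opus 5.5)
Your proposal is correct and follows the paper's proof: the paper also extends functions in $H^1_I(V_I)$, with $I=(0,\lambda^{-\kappa})$, by zero into $H^1(\Omega)$, uses Lemma \ref{comparing operators} to get $E_n(N_\lambda^\Omega)\le E_n(Q_\lambda^I)$, and then invokes Corollary \ref{cor: upper bound corner}. Your remarks on the gauge (the paper simply fixes $A=(0,x)$) and on working with $\Lambda_n$ instead of $E_n$ are harmless refinements, not needed changes.
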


\begin{proof}
    Due to our assumptions on $\Omega$, there exists $b>0$ such that
    $ \Omega \cap (-b,b)^2 = V_{(0,b)} $. Set $I:= (0, \lambda^{-\kappa})$  then one has $V_I \subset V_{(0,b)} \subset \Omega$ for $\lambda$ sufficiently large.
    Consider the inclusion
    $$ J: H^1_I(V_I) \hookrightarrow H^1(\Omega) \quad u \mapsto \tilde u :=\text{ the extension of }u \text{ with 0 onto } \Omega .$$
    Due to $q_\lambda^I(u) = n_\lambda^\Omega(Ju)$ we can  combine Corollary \ref{cor: upper bound corner} and Lemma \ref{comparing operators}: For any $\kappa \in (0, \frac{1}{q+1}) $ there exists $C>0$ such that:
    \begin{equation*}
    \lambda^{\frac{2}{q+1}} E_n(T_1) \bigl(1+C\lambda^{-2(\frac{1}{q+1}-\kappa)} \bigr)
    \geq E_n(T_\lambda^{I})
    \geq E_n(L^{I}_\lambda)
    = E_n(Q_\lambda^{{I}}) \geq E_n(N_\lambda^\Omega). \qedhere
    \end{equation*}
\end{proof}

It is left to show the existence of a suitable lower bound for the eigenvalues of $N_\lambda^\Omega$.

\begin{proposition}
\label{lem: lower bound Omega}
    Let $n \in \N$ and $\kappa \in (\frac{2q+1}{3q(q+1)}, \frac{1}{q+1})$. Then there exists $\lambda_0,C>0$ such that for all $\lambda  \geq \lambda_0$ one has
    \[
    E_n(N_\lambda^\Omega) \geq \lambda^{\frac{2}{q+1}} E_n(T_1) - C \lambda^{2\kappa}-C\lambda^{\frac{2q+3}{q+1}-3q\kappa}
    \]
    as $\lambda \to \infty$.
    
\end{proposition}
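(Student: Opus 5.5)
The plan is an IMS partition of $\Omega$ into four pieces: the peak tip $V_{I_1}$ with $I_1=(0,\lambda^{-\kappa})$, the intermediate section $V_{I_2}$ with $I_2=(b_1\lambda^{-\kappa}, b_2\lambda^{-\frac{1}{2q}})$, the outer section $V_{I_3}$ with $I_3=(b_3\lambda^{-\frac{1}{2q}}, b)$, and the remainder $\Omega\setminus V_{(0,b/2)}$ away from the origin. Concretely, I would choose cut-off functions $\chi_1,\dots,\chi_4$ depending only on $x$ near the origin, with $\sum_j\chi_j^2=1$ on $\Omega$. Here $\chi_1$ equals $1$ on $x<\lambda^{-\kappa}/2$ and is supported in $x<\lambda^{-\kappa}$, so $b_1=1/4$ say. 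The function $\chi_2$ makes the transition at scale $\lambda^{-\kappa}$ and then at scale $\lambda^{-\frac{1}{2q}}$. $\chi_3$ is supported in $x\in(b_3\lambda^{-\frac{1}{2q}},b)$ with $b_3$ the large constant from Lemma \ref{lem: lower bound away from corner}, and $b_2=2b_3$. Finally $\chi_4$ is supported in $x>b/2$.

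The gradients satisfy $|\nabla\chi_1|^2,|\nabla\chi_2|^2\lesssim\lambda^{2\kappa}$, and $\lambda^{\frac{1}{q}}\le\lambda^{2\kappa}$ since $\kappa>\frac{1}{2q}$. The last inequality should be checked: $\frac{2q+1}{3q(q+1)}\ge\frac{1}{2q}$ iff $4q+2\ge3q+3$ iff $q\ge1$. Each $\chi_jf$ with $j\le3$ lies in $H^1_{I_j}(V_{I_j})$, because its support in $x$ is compact in $I_j$ and $\Omega\cap(-b,b)^2=V_{(0,b)}$. Lemma \ref{lem: IMS partition} and Lemma \ref{comparing operators} then give
\[
E_n(N_\lambda^\Omega)+C\lambda^{2\kappa}\ge E_n\bigl(Q_\lambda^{I_1}\oplus Q_\lambda^{I_2}\oplus Q_\lambda^{I_3}\oplus N_\lambda^{\Omega'}\bigr),
\]
where $\Omega'$ is the remainder with Dirichlet conditions on the cut at $x=b/2$ and Neumann conditions elsewhere. (The cut is straight, inside the peak region.)

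Next I bound each summand from below. By Corollary \ref{cor: lower bound corner}, $E_n(Q_\lambda^{I_1})\ge\lambda^{\frac{2}{q+1}}E_n(T_1)-C\lambda^{\frac{2q+3}{q+1}-3q\kappa}$. Lemma \ref{lem: lower bound middle} together with Lemma \ref{lem: coordinate change} gives $Q_\lambda^{I_2}\ge C\lambda^{2(1-q\kappa)}$, and Lemma \ref{lem: lower bound away from corner} gives $Q_\lambda^{I_3}\ge C\lambda$. Both exponents exceed $\frac{2}{q+1}$, since $\kappa<\frac{1}{q+1}$ and $q>1$.

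For $\Omega'$, which is a curvilinear polygon away from the peak with corners of angles $\alpha_j\notin\{0,\pi,2\pi\}$, I would invoke the known lower bound $E_1\ge c\lambda$ for some $c>0$. I would prove it by a further IMS partition at scale $\lambda^{-1/4}$ (or $\lambda^{-1/2}$ with large constant), comparing with the model operators: the half-plane bound $\Theta_0\lambda$ of Lemma \ref{lem: 2-dim model op} near smooth boundary, the sector bound near corners (the infimum of the sector spectrum is positive, scaling as $\lambda$), and Lemma \ref{lem: lower bound dirichlet} in the interior. Localization errors are $o(\lambda)$. This is the step I expect to be the main obstacle, because it needs results not proved in the paper: positivity of $\inf\spec N_1^{S_\alpha}$, which follows e.g.\ from the diamagnetic-type bounds in \cite{Bonnaillie_2006}, and the curvilinear straightening near corners. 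I would cite \cite{Bonnaillie_2006}, which gives $E_1(N_\lambda^{\Omega'})\ge c\lambda$ with mixed conditions by Dirichlet monotonicity.

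Finally, since the $n$-th eigenvalue of the direct sum is at least $\min\{E_n(Q^{I_1}_\lambda),\,\text{the bottoms of the other three}\}$, and the latter are $\gg\lambda^{\frac{2}{q+1}}E_n(T_1)$ for large $\lambda$, we get $E_n(\oplus)\ge E_n(Q_\lambda^{I_1})$ for $\lambda\ge\lambda_0$. Combining with the displayed inequality yields the claim.
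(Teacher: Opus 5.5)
Your outline follows the paper's proof. It uses the same four-piece IMS partition in $x$, with transitions at scales $\lambda^{-\kappa}$, $\lambda^{-\frac{1}{2q}}$ and $b$, and a localization error of order $\lambda^{2\kappa}$. It applies Corollary~\ref{cor: lower bound corner} at the tip, Lemmas~\ref{lem: lower bound middle} and~\ref{lem: lower bound away from corner} on the two sections, and \cite[Theorem 2.1]{Bonnaillie_2006} on the remainder. The paper keeps Neumann conditions on $\tilde\Omega=\Omega\setminus\overline{V_{(0,b/2)}}$, a curvilinear polygon with two new convex corners; your Dirichlet condition on the cut plus monotonicity is equivalent.

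One step is false as written. You assert that $\chi_j f\in H^1_{I_j}(V_{I_j})$ for $j\le 3$ because the $x$-support is compact in $I_j$. For $j=1$ this fails. Since $\chi_1\equiv 1$ near the tip, $\chi_1 f$ does not vanish there. But $H^1_{I_1}(V_{I_1})$ consists of functions vanishing for $x<c_0$ with some $c_0\in I_1$, hence $c_0>0$.

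What you actually need is $\chi_1 f\in D(q_\lambda^{I_1})$, the $H^1$-closure of $H^1_{I_1}(V_{I_1})$; in other words, that the tip imposes no boundary condition. Without this, $J$ does not map into the form domain of the direct sum, so Lemma~\ref{comparing operators} does not apply. Nor can you use Corollary~\ref{cor: lower bound corner}, which concerns the form on exactly that domain.

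The paper closes this gap by invoking the density result \cite[Lemma 8]{Pankrashkin_2025}. You can also prove it directly:
\begin{itemize}
\item Take $u\in H^1(V_{I_1})$ vanishing for $x\ge c_0$, and truncate its real and imaginary parts at level $M$. This gives bounded $u_M\to u$ in $H^1$, still vanishing for $x\ge c_0$.
\item Multiply by $\eta(x/\varepsilon)$, where $\eta=0$ on $(-\infty,1]$ and $\eta=1$ on $[2,\infty)$. The product lies in $H^1_{I_1}(V_{I_1})$.
\item The commutator term is at most a constant times $\varepsilon^{-2}\|\eta'\|_\infty^2M^2|V_{(\varepsilon,2\varepsilon)}|=O(M^2\varepsilon^{q-1})$. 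This tends to $0$ as $\varepsilon\to 0$ because $q>1$.
\item The remaining terms tend to $0$ by dominated convergence, since they are supported in $x<2\varepsilon$.
\end{itemize}

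A minor point: concluding $E_n(\oplus)\ge E_n(Q_\lambda^{I_1})$ requires the bottoms of the other three pieces to exceed $E_n(Q_\lambda^{I_1})$. That uses the upper bound of Corollary~\ref{cor: upper bound corner}, as in the paper. Alternatively, insert the lower bound for $E_n(Q_\lambda^{I_1})$ directly into the minimum.
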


\begin{proof}
    Let $b_1>0$ and consider the $C^\infty$ cutoff functions
    \[
     \chi,\tilde \chi:\R \rightarrow \R_+, \quad \chi(s) = \begin{cases}
        1, & s \leq \frac{1}{2} \\ 0, & s \geq 1
    \end{cases}, \quad \chi^2+\tilde\chi^2 \equiv 1,
    \]
    then we construct a partition of unity $1 = \chi_1^2 + \chi_2^2 + \chi_3^2 +\chi_4^2 $ of $\Omega$ where $\chi_i$ are defined as follows:
    \[
    \chi_1(x,y) = \begin{cases}
        \chi ({x}{\lambda^{\kappa}}), & (x,y) \in V_{(0,b)} \\ 0, & \text{otherwise}
    \end{cases} 
    , \quad
     \chi_2(x,y) = \begin{cases}
        \tilde \chi(x\lambda^\kappa), & (x,y) \in V_{(0,b)}, \ x \leq 2\lambda^{-\kappa} \\ 
        \chi\bigl(xb_1^{-1}\lambda^{\frac{1}{2q}}\bigr), & (x,y ) \in V_{(0,b)} , \ x> 2\lambda^{-\kappa}\\
        0, & \text{otherwise} 
    \end{cases} \ ,
    \]
    \[
     \chi_3(x,y) = \begin{cases}
        \tilde \chi\bigl(x b_1^{-1}\lambda^\frac{1}{2q}\bigr),  & (x,y) \in V_{(0,b)} ,x \ \leq 2b_1\lambda^{-\frac{1}{2q}} \\ 
        \chi(xb^{-1}), & (x,y ) \in V_{(0,b)} , \ x> 2b_1\lambda^{-\frac{1}{2q}}\\
        0, & \text{otherwise} 
    \end{cases}, \quad
    \chi_4 (x,y) = \begin{cases}
        \tilde\chi(xb^{-1}), & (x,y) \in V_{(0,b)} \\ 1, & \text{otherwise}
    \end{cases} \ .
    \]
    For $\lambda$ sufficiently large we have $\chi_j \in C^\infty(\Omega)$, $j=1,\dots,4$. Set 
    \[
    I_1 := (0,\lambda^{-\kappa}), \quad I_2:= (\frac{\lambda^{-\kappa}}{2},b_1\lambda^{-\frac{1}{2q}}), \quad I_3 :=(\frac{b_1\lambda^{-\frac{1}{2q}}}{2},b), \quad \tilde\Omega := \Omega \setminus \overline{V_{(0,\frac b2)}},
    \]

    Note that $\tilde \Omega$ is a curvilinear polygon with at least two convex corners: There exists a decomposition into 
    smooth curves 
    \[
    \gamma_2, \dots, \gamma_{M-1}, \quad  \gamma_1 \setminus \partial V_{(0,b/2)} ,  \quad  \gamma_M \setminus \partial V_{(0,b/2)}, \quad \gamma_0:= \{(b/2,y) \mid y \in (a_- 2^{-q}b^q , a_+ 2^{-q} b^q \}
    \]
    and the angles of the corners are in $(0,2\pi)\setminus\{\pi\}$ per construction. Furthermore, the corners adjacent to $ \gamma_0 $ are convex.
    It also holds $\supp \chi_j \subseteq V_{I_j}$, $j=1,2,3$ as well as $\supp \chi_4 \subseteq \tilde \Omega$, so the following operator is well-defined:
    \[
    J: H^1(\Omega) \to D(q_\lambda^{I_1})\oplus D(q_\lambda^{I_2}) \oplus D(q_\lambda^{I_3}) \oplus D(n^{\tilde \Omega}_\lambda), \quad f \mapsto (\chi_1 f,\chi_2 f, \chi_3 f, \chi_4 f).
    \]
    While the inclusions of $\chi_2f \in D(q^{I_2}_\lambda)$, $\chi_3 f \in D(q^{I_3}_\lambda)$, and $\chi_4 f \in H^1(\tilde\Omega)$ are obvious, $\chi_1 f \in D(q_\lambda^I)$ needs further proof. It is apparent that
    \[
     \chi_1 f \in \{ u \in H^1(V_{I_1}) \mid  \exists \, c_0 \in I_1: u(x,y) = 0 \,\forall \, x \geq c_0 \} =:\tilde D.
    \]
    Using the density result shown in \cite[Lemma 8]{Pankrashkin_2025},  it follows that $H^1_I(V_I)$ is dense in $ \tilde D$ and therefore $\chi_1 f \in \tilde D \subseteq  D(q_\lambda^I)$. Using Lemma \ref{lem: IMS partition} we then get for some $c_0 >0$
    \[
    n_\lambda^\Omega(f) +c_0\lambda^{2\kappa} \| f\|^2_{L^2(\Omega)} \geq ( q_\lambda^{I_1} \oplus q_\lambda^{I_2} \oplus q_\lambda^{I_3} \oplus n_\lambda^{\tilde\Omega}) (Jf) ,
    \]
    and in combination with Lemma \ref{comparing operators} we arrive at
    \begin{equation}
    \label{eq: NlambdaOmega geq}
    E_n(N_\lambda^\Omega) +c_0\lambda^{2\kappa} \geq E_n( Q_\lambda^{I_1} \oplus Q_\lambda^{I_2} \oplus Q_\lambda^{I_3} \oplus N_\lambda^{\tilde\Omega}) .
    \end{equation}
     As $\tilde \Omega$ is a curvilinear polygon, \cite[Theorem 2.1]{Bonnaillie_2006} gives for some $c_1>0$ the lower bound
    \[
    N_\lambda^{\tilde \Omega} \geq c_1 \lambda.
    \]
    Furthermore, due to  Lemma \ref{lem: lower bound middle} and Lemma \ref{lem: lower bound away from corner}, we know that if $b_1$ is chosen sufficiently large, there exist $c_2,c_3>0$ such that
    \[
    Q_\lambda^{I_2} \geq c_2 \lambda^{2(1-q\kappa)} >\lambda^{\frac{2}{q+1}} ,\quad Q_\lambda^{I_3} \geq c_3 \lambda  > \lambda^{\frac{2}{q+1}}
    \]
    hold as $\lambda \to +\infty$.
    Lastly, as we get $E_n(Q_\lambda^{I_1}) \leq \lambda^{\frac{2}{q+1}} E_n(T_1) + \mathcal{O}(\lambda^{2\kappa})$ from Corollary \ref{cor: upper bound corner}, so we can use Corollary \ref{cor: lower bound corner} and the equation (\ref{eq: NlambdaOmega geq}) simplifies to
    \[
    E_n(N_\lambda^\Omega) \geq E_n(Q_\lambda^{I_1}) - c_0 \lambda^{2\kappa}
    \geq \lambda^{\frac{2}{q+1}}E_n(T_1) - c_4\lambda^{\frac{2q+3}{q+1}-3q\kappa} - c_0 \lambda^{2\kappa}
    \]
    for some $c_4>0$ which concludes our proof.
\end{proof}

    The main Theorem \ref{main theo} then follows directly from the two Propositions \ref{lem: upper bound Omega} and \ref{lem: lower bound Omega} by doing a rescaling of $T_\lambda$ that gives rise to the simplified model operator $T$ and optimizing the remainder by setting $\kappa = \frac{1}{q+1} \frac{2q+3}{2+3q} $.

\printbibliography

\end{document}